\theoremstyle{plain}
\newtheorem{thm}{Theorem}[section]
\newtheorem{lem}[thm]{Lemma}
\newtheorem{prop}[thm]{Proposition}
\newtheorem{cor}[thm]{Corollary}
\theoremstyle{remark}
\theoremstyle{definition}
\newtheorem{defn}[thm]{Definition}
\renewcommand{\o}{\mathsf}   
\newcommand{\R}{\mathbb R}
\newcommand{\Ss}{\mathbb{S}}   
\newcommand{\V}{\mathsf{V}}
\newcommand{\Lspace}{\mathsf{E}}
\newcommand{\A}{\mathscr{A}}   
\newcommand{\W}{\mathscr{W}}
\newcommand{\vu}{\mathsf{u}}
\newcommand{\vv}{\mathsf{v}}
\newcommand{\vw}{\mathsf{w}}
\newcommand{\vn}{\mathsf{n}}
\newcommand{\vp}{\mathsf{p}}
\newcommand{\be}{\mathsf{e}}
\newcommand{\bbb}{\o{b}} 
\newcommand{\SOto}{\o{SO}(2,1)}
\newcommand{\cD}{{\mathcal{D}}}
\newcommand{\bg}{\mathbf{g}}
\newcommand{\vve}{\vec{e}}
\newcommand{\Mat}{\mathsf{Mat}}
\newcommand{\Axis}{\mathsf{Axis}}
\newcommand{\hAxis}{\widehat{\mathsf{Axis}}}
\newcommand{\cpo}{\bC\P^1}
\newcommand{\rpo}{\R\P^1}
\renewcommand{\P}{\o{P}}   
\newcommand{\Det}{\mathsf{Det}}
\newcommand{\tSigma}{\tilde{\Sigma}}
\newcommand{\dev}{\mathsf{dev}}
\newcommand{\He}{\mathscr{H}}
\newcommand{\Isom}{\mathsf{Isom}}
\newcommand{\Uu}{\mathbb{U}}
\newcommand{\bnu}{\boldsymbol \nu}
\newcommand\bC{{\mathbb C}}
\newcommand\bZ{{\mathbb Z}}
\newcommand\bH{{\mathbb{H}}}
\renewcommand{\L}{\mathbb{L}} 
\newcommand\GL{{\rm GL}}
\newcommand\SL{{\rm SL}}
\newcommand\PGL{{\rm PGL}}
\newcommand\SO{{\rm SO}}
\newcommand\SI{{\bf S}}
\newcommand\clo{{\rm Cl}} 
\newcommand\bdd{{\mathbf d}}  
\newcommand\ra{\rightarrow}
\newcommand\emp{\emptyset}
\newcommand\eps{\epsilon}
\newcommand\vth{\vartheta}
\newcommand\Aff{{\mathsf{Aff}}}
\newcommand\ovl{\overline}
\newcommand\Aut{{\mathsf{Aut}}}
\newcommand\Idd{{\rm I}}
\newcommand\tri{\triangle}
\newcommand{\rp}[1]{\R\P^#1}
\newcommand\rpn{\rp{n}}
\newcommand\rptwo{\rp{2}}
\newcommand\rpthree{\rp{3}}
\newcommand\rpone{\rp{1}}
\renewcommand{\Join}{\mathsf{Join}}
\newcommand{\llrrparen}[1]{
  \left(\mkern-4mu\left(#1\right)\mkern-4mu\right)}
\newcommand{\bx}{\mathbf{x}}
\newcommand{\bb}{\mathbf{b}}
\begin{document}

\title[Margulis spacetimes]{Topological tameness of Margulis spacetimes}

\author[Choi]{Suhyoung Choi}
\address{Department of Mathematical Sciences \\ KAIST \\
Daejeon 34141, Republic of Korea \\
}
\email{schoi@math.kaist.ac.kr}
\author[Goldman]{William Goldman}
\address{Department of Mathematics \\ University of Maryland 
\\College Park, MD 20742-4015, USA 
}
\email{wmg@math.umd.edu}
\date{\today}
\thanks{The first author was supported by the National Research Foundation of Korea(NRF) grant 
funded by the Korea government(MEST) (No.2010-0027001).
The second author gratefully acknowledges partial support from 
National Science Foundation grants  DMS-070781 and  DMS-1065965.}

\dedicatory{\smallskip
\begin{center}
{Dedicated to the memory of Bill Thurston, our mentor}
\end{center}
}

\begin{abstract}
We show that  Margulis spacetimes without parabolic holonomy elements are topologically tame. 
A Margulis spacetime is the quotient of the $3$-dimensional Minkowski space by  
a free proper isometric action of the free group of rank $\geq 2$.
We will use our particular point of view that the Margulis spacetime 
is a manifold-with-boundary with an $\rpthree$-structure
in an essential way.
The basic tools are a bordification by a closed $\rptwo$-surface with  
free holonomy group, and the work of Goldman, Labourie, and Margulis on 
geodesics in the Margulis spacetimes and $3$-manifold topology.

\end{abstract}



\subjclass{Primary 57M50; Secondary 53C15, 53C50, 53A20}
\keywords{Lorentzian manifold, handlebody, geodesic flow,
real projective structure, proper action}

\nopagebreak

\maketitle
\tableofcontents





\section{Introduction}

A {\em complete affine manifold\/} is a quotient $M$ of an $n$-dimensional real affine 
space $\Lspace$ 
by a discrete group $\Gamma$ of affine transformations acting properly.
Equivalently, $M$ is a manifold with a geodesically flat torsion-free affine  connection.  

Complete affine $3$-manifolds with solvable fundamental groups were classified by 
Fried-Goldman~\cite{FG}, and these include all cases where $M$ is compact.
In all other cases, $M$ admits a parallel Lorentzian structure, 
and $\pi_1(M) \cong \Gamma$ is a free group. 
The existence of these manifolds was first demonstrated by Margulis \cite{Mar}, 
and later clarified by Drumm \cite{Drumm_jdg} using {\em crooked planes.\/}
In particular Drumm's examples are {\em tame,\/} 
that is, homeomorphic to the interiors of compact manifolds-with-boundary, 
which in this case are {\em closed solid handlebodies.\/} 

Charette, Drumm, Goldman, and Labourie \cite{Drumm_jdg}, \cite{DG}, \cite{DG2}
made contributions to understanding such spacetimes with parallel Lorentzian structures
culminating in recent work \cite{GLM} and \cite{GL}.  The complete classification of such spacetimes is currently going on with the work of
Charette, Drumm, Goldman, Labourie, Margulis and Minsky \cite{CDG}, \cite{CDG2}, \cite{GLM}, and \cite{GLMM}.

The theory of compactifying open manifolds goes back to 
Browder, Levine and Livesay \cite{BLL} and Siebenmann \cite{Sieb}. 
For $3$-manifolds, Tucker \cite{Tu} and Scott and Tucker \cite{ST} 
made an initial clarification. (See also Meyers \cite{Me}. )
As a note, we state that complete 
hyperbolic $3$-manifolds with finitely generated fundamental groups were shown to be tame
by Agol \cite{Agol}, Calegari, and Gabai \cite{CalGa}. 
For examples of nontame finitely generated $3$-manifolds, see the examples of Scott-Tucker \cite{ST} and 
Canary-Minsky \cite{CanMin}, and Example 4 of Souto \cite{Souto}. 
Also, see Bowditch \cite{Bow} for details on this very large topic. 
We won't elaborate on this as the methods are completely  
different. Our proof is closer to the proof of
the tameness of geometrically finite hyperbolic $3$-manifolds due 
to  Marden \cite{Marden} (see  Thurston~\cite{Thnote} and  Epstein and Marden \cite{Epstein}. 
See also Bonahon \cite{Bonahon}).


Let $V$ be a vector space over $\R$. 
The {\em projectivization}  $\P(V)$ is defined as the quotient space 
\[V -\{O\}/\sim \hbox{ where } 
\vv \sim \vw \hbox{ if and only if } \vv= s \vw \hbox{ for } s \in \R -\{0\}.\]
Define $\Ss(V)$ as 
\[V -\{O\}/\sim \hbox{ where } 
\vv \sim \vw \hbox{ if and only if } \vv= s \vw \hbox{ for } s > 0.\]
A {\em projective subspace} of $\SI^{n}$ is the image of $\Ss(W)$ of the vector subspace $W$ of $\R^{n+1}$
under the projection $\R^{n+1} -\{O\} \ra \Ss(\R^{n+1}) = \SI^{n}$. 

Let $\V$ denote a $3$-dimensional real vector space 
with a Lorentzian inner product of signature $2,1$ and a fixed orientation.
The spacetime 
$\Lspace$ is an affine space with the underlying vector space $\V$.
A {\em Lorentzian isometry} is an automorphism of $\Lspace$ 
preserving the Lorentzian inner product. 
Denote the group of orientation-preserving Lorentzian isometries
by $\Isom^+(\Lspace)$. 
$\P(\V)$ is identical with the real projective plane $\rptwo$. 
Here $\PGL(3, \R)$ acts faithfully on $\rptwo$ as the group of 
collineations. 
Recall that we embed the hyperboloid model $\bH^2$ of the hyperbolic plane 
to the disk that is the interior of the conic in $\rptwo$ determined by null vectors. 

Define  the {\em sphere of directions}  $\SI^2_{\infty}:=\Ss(\V)$ double-covering $\rptwo$. 
Then the image $\Ss_+$ of the space of future timelike vectors 
identifies with the hyperbolic $2$-plane $\bH^2$, 
which is basically the Beltrami-Klein model of the hyperbolic plane.
Let $\Ss_-$ denote the subspace of $\Ss$ corresponding to past timelike vectors.
The group of orientation preserving linear maps of $\V$ is denoted by $\SOto$, a Lie group
with two components one of which is the identity component $\SOto^o$.
The linear group $\SOto^o$ acts faithfully  on $\bH^2 = \Ss_+$ as the orientation-preserving isometry group 
and $\SOto$ acts so on $\Ss_+ \cup \Ss_-$ 
and acts on $\SI^2_{\infty}$ projectively where the action 
is induced from that on $\V$.


Since $\SOto$ injects to $\PGL(3, \R)$ under projection $\GL(3, \R) \ra \PGL(3, \R)$, 
we consider that $\SOto$ acts on $\Ss_+ = \bH^2$ a subset of $\rptwo$ 
as well in a {\em projective manner}. 
Additionally, $\SOto^o$ acts on $\Ss_+$ {\em honestly} as a subset of $\SI^2_{\infty}$. 

Denote by 
\[  \Isom^+(\Lspace) \xrightarrow{\L} \SOto\] 
obtained by taking the linear part of an affine isometry of $\Lspace$.
We also denote by \[{\L}': \Isom^+(\Lspace) \stackrel{\L}{\ra} \SOto \hookrightarrow \PGL(3, \R)\] 
with the image acting on $\rptwo$. 
The identity component $\Isom^{+}_{0}(\Lspace)$ of $\Isom^{+}(\Lspace)$ acts on $\Ss_{+}$ and $\Ss_{-}$ respectively and
where $\L(\Isom^{+}_{0}(\Lspace))\subset \SOto^{o}$ holds. 
%

A linear Lorentzian isometry is {\em hyperbolic\/} if its eigenvalues are distinct and real.
Hyperbolic isometries comprise an open subset of $\SOto$, and in a suitable basis
represented by matrices
\[ 
\bmatrix \pm e^l & 0 & 0 \\  0 & 1 & 0 \\  0 & 0 & \pm e^{-l} \endbmatrix \]
(where both signs agree). If all the eigenvalues are positive, then 
such an isometry is said to be {\em positive hyperbolic.\/}

Suppose that $\Gamma$ is a finitely generated orientation-preserving Lorentzian isometry group acting 
freely and properly on $\Lspace$. We assume that $\Gamma$ is not amenable
(that is, not solvable). 
Since $\Lspace/\Gamma$ is aspherical, the fundamental group $\Gamma$ is torsion-free
by Lemma 9.4 of \cite{Hempel}.  
\begin{itemize} 
\item $\L|\Gamma$ is faithful and $\L(\Gamma)$ is discrete by \S 2.1.2 of Fried-Goldman \cite{FG}.
\item By Mess \cite{Mess}, $\Gamma$ is a free group of rank $\geq 2$. 
\end{itemize}
Thus, $\Gamma$ injects under $\L$ to $\L(\Gamma) \subset \SOto$ acting properly 
and freely on $\Ss_+ \cup \Ss_-$. 
We obtain an exact sequence 
\[ 1 \ra \Gamma' \ra \Gamma \ra J \ra 1\]
where $\Gamma'$ is the subgroup of $\Gamma$ of index $\leq 2$ so that 
${\L}(\Gamma') \subset \SO(2, 1)^o$ and $J$ is either $\bZ/2\bZ$ or is the trivial group. 
Since $\Gamma$ acts freely on $\Lspace$, we obtain that $\L(\Gamma)$ cannot contain $-\Idd$.
Hence, $\L'$ is faithful.
(See also \S 3 of Goldman-Labourie \cite{GL} for details.)
Thus, we restrict ourselves to the group actions of free groups of rank $\geq 2$ in this paper.
Then $\Lspace/\Gamma$ is said to be a {\em Margulis spacetime}, and an element of $\Gamma$ is 
said to be a {\em holonomy}. 

Note that $\bH^2/\L'(\Gamma)$ is not orientable if $\L(\Gamma)$ is not 
a subset of $\SOto^o$. 
However, $\Ss_+ \cup \Ss_-/{\L}(\Gamma)$ is an orientable surface 
double-covering $\bH^2/\L'(\Gamma)$. 
(See \cite{CDG2} for details.)
A subgroup of $\SOto$ is {\em Fuchsian} if it is a discrete subgroup without torsion. 
A Fuchsian group $\L'(\Gamma)$ is {\em convex cocompact} if 
one of the following holds: 
\begin{itemize}
\item  $\bH^{2}/\L'(\Gamma)$ has a compact convex core. 
\item $\bH^{2}/\L'(\Gamma)$ is geometrically finite without cusps
\item  
$\L'(\Gamma)$ contains no parabolic element.
\end{itemize} 

In this paper, we will be concerned with the cases without parabolic elements in $\Gamma$. 
The following conditions are equivalent conditions for a finitely generated orientation-preserving Lorentzian isometry group $\Gamma$. 
\begin{itemize} 
\item $\Lspace/\Gamma$ is a Margulis spacetime without parabolic holonomy elements.
\item An index-two subgroup $\Gamma \cap \Isom^{+}_{0}(\Lspace)$ or
$\Gamma$ itself is a proper affine deformation of a convex cocompact Fuchsian group.
(See \cite{GLM} for definition. ) 
\item $\Gamma$ is a nonamenable Lorentzian isometry group acting properly on $\Lspace$ without 
parabolic holonomy. 
\item $\Gamma$ is a free Lorentzian isometry group of rank $\geq 2$ acting properly and freely on $\Lspace$ without parabolic 
holonomy. 
\end{itemize} 
In this case, we call $\Lspace/\Gamma$ a {\em Margulis spacetime without cusp} for shortness.  

Notice that the definitions of hyperbolic, positive hyperbolic, and parabolic for elements of $\Gamma$ are 
the same as ones for ${\L'}(\Gamma)$ as a group of isometries of $\Ss_+$. 
(If a hyperbolic element in $\SOto$ has a negative eigenvalue, then the eigenvalues should be $-\lambda, -1/\lambda, 1$, 
where $\lambda> 0$.) 
It also follows from above that elements of $\Gamma$ are either hyperbolic or parabolic or identity. 
(See \cite{GLM}).

An {\em $\rpn$-structure} on a manifold is given by an atlas of charts to $\rpn$ with 
projective transition maps. 
Such geometric structures were first considered by Kuiper, 
Benz\'ecri, Vinberg, Koszul
and others in the 1950s and 1960s. Further developments 
can be followed in Goldman \cite{Gbook}, Choi and Goldman \cite{chgo}, Benoist \cite{Ben1}, 
and Cooper, Long, and Thistlethwaite \cite{Cooper2006} and \cite{CLT} and many others. 



\begin{thm}\label{thm:A}
Let $\Gamma$ be a free orientation-preserving Lorentzian isometry group of rank $\geq 2$ acting on   
on $\Ss_+\cup \Ss_-$ properly 
and freely
but without parabolic.
Then $\Gamma$ acts on an open domain 
${\mathcal D}\subset\SI^2_{\infty}$ 
such that ${\mathcal D}/\Gamma$ is a closed surface $\Sigma$.
Such a domain is unique up to the antipodal map $\mathscr A$.
\end{thm} 

These surfaces have the $\rptwo$-structures on 
closed surfaces of genus $g$, $g \geq 2$, discovered by Goldman~\cite{Gf}
in the late 1970s.  
Although their developing maps from the universal cover $\tilde \Sigma$ of some such surface $\Sigma$ 
are not covering-spaces onto their images in $\rptwo$, 
when lifted to the double-covering space $\SI^2_{\infty}$, 
the developing maps (remarkably)
are covering-spaces onto open domains. The surface is a quotient of 
a domain in $\SI^2_{\infty}$ by a group of projective automorphisms,
an $\rptwo$-analog of the standard Schottky uniformization
of a Riemann surface as a $\cpo$-manifold as observed by Goldman. 

An $n$-dimensional open manifold is said to be 
{\em tame} if it is homeomorphic to the interior of a compact $n$-manifold 
with boundary. 
A {\em handlebody} is a $3$-dimensional manifold obtained 
from a $3$-ball $B^3$ by attaching $1$-handles to $3$-dimensional balls.
The topology of handlebodies and related objects form a central topic in 
the $3$-dimensional manifold topology. See Hempel \cite{Hempel} for a thorough survey of 
this field of mathematics developed in the 20th century.

\begin{thm}\label{thm:B}
Let $M$ be the Margulis spacetime $\Lspace/\Gamma$ without cusp.
Then $M$ is homeomorphic to the interior of an orientable handlebody. 
\end{thm}


The proof is given in \S \ref{sec:tameness} by compactifying the Margulis spacetime into
a compact $\R\P^3$-manifold with $\R\P^2$-surface boundary. 
We remark that Frances \cite{CF} earlier produced conformal compactifications of
Margulis spacetimes, which are not manifolds as opposed to our projective compactifications. 
We are currently trying to generalize these techniques  to Lorentz manifolds with parabolics (see \cite{ChDG}) and  
to higher dimensional geometric structures and so on.


A complete geodesic on a quotient space of 
a hyperbolic space or a Lorentzian space  is {\em nonwandering}  if 
it is bounded in both directions, or equivalently, the closure of the forward part is compact 
and so is that of the backward part, or the $\alpha$-limit and the $\omega$-limit are both nonempty and compact.
This is the same as 
the term ``recurrent'' in \cite{GL} and \cite{CGJ}, which 
does not agree with the common usage in the dynamical system theory
whereas the recurrence set they discuss is the same as the nonwandering set as 
in Eberlein \cite{Eb} and Katok and Hasselblatt \cite{Katok2}. 
Both endpoints of the geodesics lie in the limit sets
and they comprise the nonwandering set 
by Corollary 3.8 in \cite{Eb}.






In \S\ref{sec:ProjGeo}, we review facts on hyperbolic surfaces, 
Fuchsian group actions, 
$\rptwo$-structures  and geometry that we need.
We compactify $\Lspace$ by a real projective $2$-sphere $\SI^2_{\infty}$ that is 
just the space of directions in $\Lspace$. 

In \S \ref{sec:Lspace}, we review facts about the Lorentzian space $\Lspace$ and affine boosts. 
The affine boosts will be extended to projective boosts in $\SI^3$. 

In \S \ref{sec:hyps}, we review $\R\P^2$-structures on surfaces and hyperbolic surfaces. 
We also discuss the dynamics of the geodesic flows on the unit tangent bundles of hyperbolic surfaces.

In \S \ref{sec:bord}, we prove Theorem \ref{thm:A}.
We find an open domain $\tSigma$ in 
the boundary $\SI^2_{\infty}$ of $\Lspace$ where $\Gamma$ acts properly and freely. 
$\tSigma/\Gamma$ is shown 
to be a closed $\rptwo$-surface $\Sigma$.
Such a surface was constructed by Goldman \cite{Gf}; however, 
we realize it as the quotient of an open domain in the sphere $\SI^2_{\infty}$, 
which is a union of two disks and infinitely many strips joining the two.
Here our choice of $\tilde \Sigma$ depends on the orientation.   

In \S \ref{sec:propdisc}, 
we recall the work of Goldman, Labourie, and Margulis \cite{GLM}
on nonwandering  geodesics on the Margulis spacetime. Their work in fact shows that 
all nonwandering  spacelike geodesics in a Margulis spacetime are in a uniformly bounded part.

We fix an orientation on $\Lspace$ and assume that the Margulis invariants are all positive. 
Our choice of $\tilde \Sigma$ above depends on the orientations and the signs of Margulis invariants. 
(See the statements after Theorem \ref{thm:C}.) 
Next, we prove the proper discontinuity of the action of the group $\Gamma$
on $\Lspace \cup \tSigma$. 
We first show that for a fixed Lorentzian isometry
$g$, we can put every compact properly convex subset of $\Lspace \cup \tSigma$ 
into an $\eps$-neighborhood of the ``attracting'' segment of $g$ in $\SI^2_{\infty}$ by $g^i$ for sufficiently large $i$.
(This is the key model of our proof.)

We next show that for any compact set $K \subset \Lspace \cup \tSigma$,
we have only finitely many $g \in \Gamma$ where $g(K) \cap K \ne \emp$.
Suppose not. We find a sequence $\{g_i\}$ with $g_i(K) \cap K \ne \emp$. 
Using \cite{GLM}, 
we can find an infinite sequence $\{g_i\}$ that behaves almost like $\{g^i\}$ in the
dynamical sense as $i \ra \infty$ up to small changes of stable and unstable planes. 
In other words, we can find a uniformly bounded sequence of coordinates to normalize $g_i$ into 
fixed forms and these coordinates are uniformly convergent.

In \S \ref{sec:tameness}, we prove the tameness of $\Lspace/\Gamma$, i.e., Theorem \ref{thm:B},
using the classical $3$-manifold topology as developed by Dehn and others. 
We conclude with the compactification Theorem \ref{thm:C}, proving Theorem \ref{thm:B}.  





There is another independent solution to this question by 
Danciger, Gu\'eritaud, and Kassel \cite{DGK1}.
Their approach uses the deformation of constant curvature Lorentzian manifolds and yields 
other results such as the deformability of Margulis spacetimes to anti-de Sitter spaces. 
Their method is to identify $\R^{3}$ as the Lie algebra $sl(2, \R)$ with the adjoint action by $\SL(2, \R)$. 
Then they show the existence of foliation of $M$ by complete lines fibering over the corresponding hyperbolic surface.
Compared to their approach, our approach 
mostly concentrates on $\R\P^n$-structures and the compactification of the spacetime as an $\R\P^3$-manifold. 

Danciger, Gu\'eritaud, and Kassel \cite{DGK2} also announced in September 2013 in ICERM
the solution to the crooked plane conjecture of Drumm-Goldman \cite{DG1}
that the Margulis spacetimes without cusp decomposes into $3$-balls by unions of mutually disjoint crooked planes. 
This implies the tameness also. With Theorem \ref{thm:A}, we can obtain compactifications, i.e., Theorem \ref{thm:B} also. 
For rank two cases, the conjecture was verified by 
Charette-Drumm-Goldman \cite{CDG3}. Related to this conjecture, 
we have Proposition \ref{prop:genschottky}. 

This work began from the question of Goldman on
the $\rptwo$-bordification of the Margulis spacetime during the Newton Institute Workshop 
``Representations of Surface Groups and Higgs Bundles'' held in Oxford, March 14--18, 2011. 
The authors thank 
the Newton Institute and the Institut Henri Poincar\'e where parts of this work were carried out. 
Finally, we thank Bill Thurston without whose teaching we could not have accomplished many of 
the things in this paper. 




\section*{Notation and terminology}
Let $V$ denotes a (real) vector space. We denote the associated {\em projective space,\/}
comprising $1$-dimensional linear subspaces of $V$ by $\P(V)$.
{\em Projective $n$-space\/} $\rpn := \P(\R^{n+1})$ also identifies with the quotient of
$\R^{n+1} \setminus \{0\}$ by the multiplicative group $\R^*$ of nonzero real scalars.
The {\em sphere of directions\/} $\SI^n$ consists of the quotient of $\R^{n+1} -\{0\}$ by the subgroup
$\R^+$ of positive real scalars. If $v\in\R^{n+1}$ and $v\neq 0$, then denote its equivalence
class in $\rpn$ by $[v] = [v^0 : v^1 : \dots : v^n ]$ and its equivalence class in $\SI^n$ by
$\llrrparen{ v}= \llrrparen{ v^0 : v^1 : \dots : v^n }$.
The map
\begin{align*}   \SI^n &\longrightarrow   \rpn \\
 \llrrparen{  v }  &\longmapsto [v] \end{align*}
is a double covering.  The coordinates $v^0, \dots v^n$ are the homogeneous coordinates
of the corresponding points $[v]\in\rpn$ and $\llrrparen{ v}\in \SI^n$.


The {\em sign\/} of a real number $x$ is defined as:
\[
\o{sgn} (x) := \begin{cases}   +1 &\text{~if~} x > 0 \\
0 &\text{~if~} x = 0 \\
-1 &\text{~if~} x < 0. \end{cases} \]
If $G$ is a group acting on a space $X$, and $S\subset X$ is a subset, then denote its {\em stabilizer\/} by:
\[
\mathsf{Stab}(S) := \{ g\in G \mid   g(S) = S \}.\]

\section{Projective geometry 
}\label{sec:ProjGeo}

\subsection{Projective space} \label{sub:rpnstr}

The projective space $\rpn$ is given by  $\P(\R^{n+1})$. 
We have a homomorphism from the general linear group 
\[\GL(n+1, \R) \ra \PGL(n+1, \R) := \GL(n+1, \R)/\sim\] 
where two linear maps $L_1$ and $L_2$ are equivalent if $L_1 = s L_2$ for $s \in \R - \{0\}$.
A map of an open subset of $\rpn$ to another one of $\rpn$ is {\em projective} if it is 
a restriction of an element of $\PGL(n+1, \R)$.

More generally, a nonzero linear map $L: \R^{n+1} \ra \R^{n+1}$ becomes 
a {\em projective endomorphism } 
\[ \hat L: \P(\R^{n+1}) - \P(N) \ra \P(\R^{n+1})  \hbox{ defined by } \hat L ([\vec v]) = [ L(\vec v)], \vec v \in \R^{n+1},\]
where $N$ is the kernel of $L$ and $\P(N)$ is the image of $N-\{O\}$ in $\P(\R^{n+1})$.
 (See \S 5.1 of Benz\'ecri \cite{Benzecri}.)
The space of projective endomorphisms equals the projectivization 
$\P(\Mat_{n+1}(\R))$ of the linear space $\Mat_{n+1}(\R)$ of all linear maps $\R^{n+1} \ra \R^{n+1}$. 
The space $\P(\Mat_{n+1}(\R))$ forms a compactification of the group $\PGL(n+1, \R)$ as observed first by 
Kuiper \cite{Kuiper} and developed by Benz\'ecri \cite{Benzecri}.

The projective geometry is given 
by a pair $(\rpn, \PGL(n+1, \R))$. 
An {\em $\rpn$-structure} on a manifold $M$ is given by a maximal atlas of 
charts to $\rpn$ with projective transition maps. 
The manifold $M$ with an $\rpn$-structure is said to 
be an {\em $\rpn$-manifold}. A {\em projective map} for two 
$\rpn$-manifolds $M$ and $N$ is a map $f: M \ra N$ so that $\phi \circ f \circ \psi^{-1}$ is projective 
whenever it is defined where $\phi$ is a chart for $N$ and $\psi^{-1}$ is a local inverse of a chart for $M$.

An $\rpn$-structure on a manifold $M$ gives us
an immersion $\dev: \tilde M \ra \rpn$ that is equivariant with respect 
to a homomorphism 
\begin{align*} 
& h: \pi_1(M) \ra \PGL(n+1, \R): \hbox{ that is,}  \\
& \dev \circ \gamma = h(\gamma) \circ \dev \hbox{ for every } \gamma \in \Gamma
 \end{align*} 
where $\tilde M$ is a regular cover of $M$ and $\Gamma$ is the deck transformation group of $\tilde M \ra M$. 
Here, $\dev$ is called a {\em developing map} and $h$ is called the {\em holonomy homomorphism}. 
$(\dev, h)$ is only determined up to an action of $\PGL(n+1, \R)$
where 
\[g(\dev, h(\cdot)) = (g \circ \dev, g h(\cdot) g^{-1}) \hbox{ for } g \in \PGL(n+1, \R).\]
Conversely, such a pair $(\dev, h)$ will give us an $\rpn$-structure
since $\dev$ gives us charts that have projective transition maps.
(See \cite{cdcr1} for details.)




\subsection{The sphere of directions} \label{subsec:notation}

We can identify the sphere $\SI^n$ with $\Ss(\R^{n+1})$.
There exists a quotient map $q_1: \SI^n \ra \rpn$ defined by sending 
a unit vector to its equivalence class. This is a double-covering map,
and it induces an $\rpn$-structure on $\SI^n$.
The group $\Aut(\SI^n)$ of projective automorphisms is isomorphic 
to the group $\SL_\pm(n+1, \R)$ of linear maps of determinant equal to $\pm 1$
with the quotient homomorphism $\SL_\pm(n+1, \R) \ra \PGL(n+1, \R)$
with the kernel $\pm \Idd$. It will be convenient to think of the elements of $\Aut(\SI^n)$ as matrices
in $\SL_\pm(n+1, \R)$. 

This space is again an open dense subspace of $\Ss(\Mat_n(\R))$. 
We call each element of $\Ss(\Mat_n(\R))$ a {\em projective endomorphism} of $\SI^n$.
Each element corresponds to a map 
\[\hat L: \Ss(\R^{n+1}) - \Ss(N) \ra \Ss(\R^{n+1}) 
\hbox{ given by }  \hat L (\llrrparen{ \vec v }) = \llrrparen{ L (\vec v)}, \vec v \in \R^{n+1}\]
where $N$ is the kernel of a linear map $L$ in $\Mat_n(\R)$. 

For an open domain $\mathcal D$ in $\SI^n$ and a discrete group $\Gamma$ in $\Aut(\SI^n)$ 
acting on it properly 
and freely, ${\mathcal D}/\Gamma$ has 
an $\rpn$-structure since $q_1|{\mathcal D}$ gives an immersion to $\rpn$ and the homomorphism 
\[\Gamma \subset \Aut(\SI^n) \ra \PGL(n+1, \R)\] 
gives the associated holonomy homomorphism.

In this paper, we will study objects on $\SI^3 
= \Ss(\R \oplus \R^3)$. 



The antipodal map 
\begin{align*}
\SI^{n} &\xrightarrow{\A}  \SI^n \\
 \llrrparen{\vv} &\longmapsto \llrrparen{-\vv}
 \end{align*}
generates the deck transformation group $\SI^n \ra \rpn$. 
Given a subset or a point $K$ of $\SI^3$, we denote by $K_-$ the set of antipodal points of points of  $K$ or the antipodal point
respectively. 

Given a subset $A$ of $\SI^{n}$, 
$\o{span}(A)$ is the unique minimal projective subspace $S$ of $\SI^{n}$ containing $A$.

\subsubsection{Elliptic geometry}

A {\em subspace} of $\SI^3$ is a subset defined by a system of linear equations in $\R^4$. 
A {\em line} is a $1$-dimensional subspace.  A singleton is not a subspace. 
A {\em projective geodesic} is an arc in a line. 
We will be using the standard Riemannian metric $\bdd$ on $\SI^3$.
Notice that the geodesic structure of $\SI^3$ is the same as the projective one. 
Thus, we will use the same term ``geodesics'' for both types of geodesics. 
Geodesics and projective subspaces are all subsets of lines or subspaces in $\SI^3$. 
A projective automorphism $g$ sends these to themselves, while $g$ does not preserve any metric.
A pair of {\em antipodal points} on $\SI^3$ is the equivalence class $\llrrparen{ v}$ of a nonzero vector $v$ 
and one $\llrrparen{ -v}$ of $v_{-}$. 
A segment connecting two antipodal points is precisely a segment of $\bdd$-length $\pi$.
(For these topics, see 
\S 34 of Eisenhart \cite{Eisen}.)

Here, we denote by $\ovl{pqp_-}$ the unique closed segment in 
$\SI^3$ with endpoints $p$ and $p_-$ passing through $q$
not equal to $p$ or $p_-$. By $\ovl{pq}$, we denote the unique closed segment in $\SI^3$ with endpoints $p$ and $q$ 
not containing $p_-, q_-$ provided $p \ne q$ and $p \ne q_-$. 
The notation $\ovl{pq}^o$ or $\ovl{pqp_-}^o$ indicates the interior of the segment.


A {\em convex subset} of $\SI^{3}$ is a subset where every pair of points can be connected by 
a segment of $\bdd$-length $\leq \pi$. 
A  subset $A$ of $\SI^{3}$ is {\em properly convex} if $A$ is a precompact convex subset of some open hemisphere.
A compact convex set is properly convex if and only if there are no antipodal pair of points. 
These notions are projectively invariant. 
(See \cite{psconv} for details.)   

\subsubsection{Bi-Lipschitz maps}
A {\em bi-Lipschitz map} of $\SI^3$ is a homeomorphism $\SI^3 \xrightarrow{f} \SI^3$ so that 
\[C^{-1}\bdd(x, y) \leq \bdd(f(x), f(y)) \leq C \bdd(x, y) \hbox{ for } x, y \in \SI^3\]
where $C > 0$ is independent of $x, y$. 
Elements of $\Aut(\SI^3)$ including ones extending the Lorentzian isometries 
are bi-Lipschitz maps of $\SI^3$.

Finally, for the purpose of drawing figures only, we map $\Lspace$ to the unit $3$-ball in $\R^3$ by the {\em normalization map} 
\begin{equation}\label{eqn:normalization}
 \llrrparen{ x: y: z: 1 } \ra  \frac{(x, y, z)}{\sqrt{x^2 + y^2 + z^2+1}}. 
 \end{equation} 
This will identify $\SI^2_{\infty}$ with the sphere of radius $1$ in $\R^3$ 
and as a codimension-one projective subspace of $\SI^3$, represented stereographically. 





\subsection{A matrix lemma} 
We denote 
\begin{align} 
& \be_{1}= \llrrparen{ \vve_1}, \be_{2}=\llrrparen{ \vve_2}, \be_{3}=\llrrparen{ \vve_3} , \be_{4}=\llrrparen{ \vve_4}, \nonumber \\
&  \be_{1-}=\llrrparen{ -\vve_1}, \be_{2-}=\llrrparen{ -\vve_2 }, \be_{3-}=\llrrparen{ -\vve_3},  \be_{4-}=\llrrparen{ -\vve_4}. \nonumber 
\end{align}
There exists a standard metric on $\R^4$ where these vectors form an orthonormal basis.
We will use the norm of this metric to be called the {\em Euclidean norm}. 
The {\em norm} of a matrix $A$ is $\max\{|a_{ij}|\}_{i=1, \dots, 4, j=1, \dots, 4}$ for entries $a_{ij}$ of $A$.

\begin{lem}[Uniform convergence]\label{lem:matrixcor} 
Let $\vv_i^j$ for $j =1,2,3,4$ be a sequence of points of $\SI^3$. 
Suppose that $\vv_i^j \ra \vv_\infty^j$ for each $j$ 
and mutually distinct points $\vv_\infty^1, \dots, \vv_\infty^4$
formed by independent vectors in $\R^4$.
Then there exists an integer $I_{0}$ 
and a sequence $h_i$ for $i > I_{0}$ of elements of $\Aut(\SI^3)$ so that 
\begin{itemize}
\item $h_i(\vv_i^j) = \be_j$,
\item  $\{h_i\} \ra h_{\infty}$ and $\{h_i^{-1}\} \ra h_{\infty}^{-1}$ 
uniformly under the $C^s$-topology for every $s \geq 0$.
\end{itemize}  
\end{lem}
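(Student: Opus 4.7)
The plan is to build $h_i$ as a normalization of the inverse change-of-basis matrix. First I would lift each $\vv_i^j \in \SI^3$ to a unit vector $\tilde v_i^j \in \R^4$, chosen so that $\tilde v_i^j \to \tilde v_\infty^j$, a unit lift of $\vv_\infty^j$; this is possible because $\SI^3$ is the unit sphere in $\R^4$ and convergence in $\SI^3$ is just convergence on this sphere. Let $M_i \in \Mat_4(\R)$ be the matrix whose columns are $\tilde v_i^1, \tilde v_i^2, \tilde v_i^3, \tilde v_i^4$, and similarly define $M_\infty$. By the assumed linear independence of the $\tilde v_\infty^j$, $M_\infty$ is invertible, so $\det M_i \to \det M_\infty \neq 0$, and in particular $M_i \to M_\infty$ entrywise with $|\det M_i|$ uniformly bounded away from $0$ and $\infty$ for large $i$.

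Next I would define the candidate by rescaling. Set $c_i := |\det M_i|^{1/4} > 0$ and put $h_i := c_i M_i^{-1}$. Then $\det h_i = \pm 1$, so $h_i \in \SL_\pm(4,\R) = \Aut(\SI^3)$. Since $M_i \vve_j = \tilde v_i^j$, one computes $h_i \tilde v_i^j = c_i \vve_j$, a positive scalar multiple of $\vve_j$; passing to $\SI^3$ this gives $h_i(\vv_i^j) = \be_j$, as required.

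For the convergence statements, continuity of matrix inversion on $\GL(4,\R)$ and of the fourth root yield $c_i \to c_\infty := |\det M_\infty|^{1/4}$ and $M_i^{-1} \to M_\infty^{-1}$ in matrix norm, hence $h_i \to h_\infty := c_\infty M_\infty^{-1}$, and by the same reasoning $h_i^{-1} = c_i^{-1} M_i \to c_\infty^{-1} M_\infty = h_\infty^{-1}$; both families are therefore uniformly bounded in norm. To upgrade matrix norm convergence to $C^s$ convergence of the induced maps on $\SI^3$, I would use that the action $A \cdot [v] = [Av/|Av|]$ of an invertible matrix $A$ on the unit sphere is smooth in $(A,v)$ on the open set of invertible $A$. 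Because $h_\infty$ is invertible and $\SI^3$ is compact, $|h_i v|$ is uniformly bounded below on $\SI^3$ for all large $i$, so the chain rule and the matrix-norm convergence give $C^s$ convergence for every $s \geq 0$; the argument for the inverses is identical.

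The main point to watch is really only bookkeeping: one must scale $M_i^{-1}$ into $\SL_\pm(4,\R)$ without losing the bound, which is exactly why the invertibility of $M_\infty$ (equivalently, the linear independence of the $\tilde v_\infty^j$) is built into the hypothesis. No genuine obstacle arises beyond that; everything else reduces to continuity of matrix inversion and smoothness of the standard representation of $\Aut(\SI^3)$ on $\SI^3$.
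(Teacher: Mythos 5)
Your proof is correct and follows essentially the same approach as the paper: form the matrix $M_i$ whose columns are the unit lifts, invert, and pass to the induced projective automorphism, with convergence following from continuity of matrix inversion. You spell out a couple of details the paper leaves implicit — the explicit normalization $c_i = |\det M_i|^{1/4}$ to land in $\SL_\pm(4,\R)$, and the chain-rule argument upgrading matrix convergence to $C^s$ convergence of the induced maps on $\SI^3$ — but the underlying construction is identical.
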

\begin{proof} 
Let $\vec{v}_i^j$ represent the unit Euclidean norm vector corresponding to $\vv_i^j$. 
We find $\hat h_i \in \Aut(\SI^3)$ so that 
$\hat h_i(\be_j) = \vv_i^j$ for each $i$. 
We set $M_i(\vec{e_i}) = \vec{v}_i^j$; 
that is, the column vectors of $M_i$ equals $\vec{v}_i^{\,1}, \vec{v}_i^{\, 2}, \vec{v}_i^{\, 3}, \vec{v}_i^{\, 4}$. 
Then clearly, the sequence $\{M_i\}$ converges to a $4\times 4$-matrix $M_\infty$ of nonzero determinant. 
Therefore, there exists an integer $I_{0}$ where the sequence $\{M_i^{-1}\}$ for $i > I_{0}$ of inverse matrices 
converges to $M_\infty^{-1}$.

We let $h_i$ be the projective automorphism in $\Aut(\SI^3)$ induced by $M_i^{-1}$, 
let $h_\infty$ be the one corresponding to $M_\infty^{-1}$, and this satisfies
the properties we need. 
\end{proof}

\subsection{The space of properly convex sets and joins} \label{subsec:space}

Let $\bdd\hbox{-diam}(A)$ denote the diameter of the set; that is, the supremum of the set of 
distances between every pair of points of a subset $A$ of $\SI^n$. 
Given a pair of points or subsets $A$ and $B$ in $\SI^n$, we define the infimum distance
\[\bdd(A, B) = \inf \{\bdd(x, y)| x \in A, y \in B\}.\]
If $\bdd(A, B) > \eps$ for $\eps > 0$, then we say that $A$ is {\em bounded away} from $B$ by $\eps$.  

An {\em $\eps$-neighborhood} $N_\eps(A)$ for a number $\eps > 0$ of a subset or a point $A$ of $\SI^3$ 
is the set of points of $\bdd$-distances less than $\eps$ from some points of $A$.
We define the geometric Hausdorff distance 
$\bdd^H(A, B)$ between two compact subsets $A$ and $B$ of $\SI^3$ 
to be 
\[\inf \{\eps > 0| B \subset N_\eps(A) \hbox{ and } A \subset N_\eps(B)\}. \]
A sequence of compact sets $\{K_n\}$ in $\SI^3$ 
{\em converges} (or {\em converges geometrically}) to a compact 
subset $K$ if $\bdd^{H}(K_{n}, K) \ra 0$. 
In other words, for every $\eps > 0$, there exists $N$ so that  
\[ K \subset N_\eps(K_n) \hbox{ and } K_n \subset N_\eps(K) \hbox{ for } n > N. \]
We will simply write $\{K_n\} \ra K$. 

The following are commonly known facts with elementary proofs (see pages 280--281 of Munkres \cite{Munkres}). 
\begin{itemize}   
\item The space of compact subsets of a compact Hausdorff space with this metric is compact Hausdorff also 
and hence every sequence has a convergent subsequence.
\item given a convergent sequence $\{K_n\}$ of compact subsets of $\SI^3$, 
if a sequence $\{J_n\}$ is such that $J_n \subset K_n$, then 
any geometric limit of a subsequence of $\{J_n\}$ is a subset of the geometric limit of $\{K_n\}$.
\item given sequences $\{K_n\}$ and $\{\hat K_{n}\}$ of compact subsets of $\SI^3$ converging to $K_{\infty}$, 
if a sequence $\{J_n\}$ is such that $K_{n}\subset J_n \subset \hat K_n$ for every $n$, then 
$\{J_{n}\} \ra K_{\infty}$.  
\item If we have $\{K^j_n\} \ra K^j$ as $n \ra \infty$ for each $j=1, \dots, m_0$, then
\[\bigcup_{j = 1}^{m_0} K^j_n \ra \bigcup_{j=1}^{m_0} K^j\] holds as $n \ra \infty$.
\end{itemize} 
Note that 
\[\bdd(A, B) \leq \bdd^H(A, B) .\]

\subsubsection{Joins}
Suppose that two compact subsets $A$ and $B$ of $\SI^3$ have no pairs of points $x \in A, y \in B$ 
so that $x$ and $y$ are antipodal, that is, $A \cap B_{-} = \emp$. 
Then the {\em join} $\Join(A,B)$ is the union of all segments of $\bdd$-lengths $< \pi$
with single endpoints in $A$ and the other ones in $B$. In set theoretic terms, we have
\newcommand{\bfa}{\mathbf{a}}
\newcommand{\bfb}{\mathbf{b}}
\begin{equation} \label{eqn:join}
\Join(A, B) = 
\Big\{ \llrrparen{ c_1\bfa + c_2 \bfb } \; \Big| 
\;  \llrrparen{\bfa } \in A, \  \llrrparen{\bfb } \in B, c_1,c_2 \geq 0  \Big\}.
\end{equation}


If $A$ and $B$ are convex and compact and $A \cup B$ has no pair of antipodal points, i.e., 
$A$ and $B$ are convex and compact and $A \cap B_{-} = \emp$, 
then $\Join(A,B)$ is defined 
and also convex. (See Proposition 2.4 of \cite{psconv}.) 
For any projective automorphism $g \in \Aut(\SI^3)$, 
\[ g \big(\Join(A, B)\big) = \Join\big(g(A), g(B)\big). \]

If $A$ and $B$ are compact and have no antipodal pair of a point of $A$ and a point $B$, 
then $\Join(A, B)$ is well-defined and is also compact as we can easily show that $\Join(A, B)$ is closed. 

For example, when $A$ is a great circle in $\SI^3$ and $b\not\in A$, 
every segment from $b$ to $A$ has  $\bdd$-length $< \pi-\eps$ for a fixed $\eps>0$.
In this case, $\Join(A,B)$ is a {\em $2$-dimensional hemisphere}, a disk in a hyperspace with geodesic boundary in $\SI^3$, 
a convex set.


\begin{lem}\label{lem:geoconv} 
Let $A_n$ and $B_n$ be a sequence of properly convex compact subsets of $\SI^3$ geometrically converging to 
properly convex compact sets $A$ and $B$ respectively. Suppose that 
$A \cap B_{-} = \emp$. Then $J_n = \Join(A_n, B_n)$ is defined for $n > I_0$ for 
sufficiently large $I_0$ and the sequence $\{J_n\}_{n > I_0}$ converges to the join $J$ of $A$ and $B$. 
\end{lem}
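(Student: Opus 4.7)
My plan is to realize each join as the image of a continuous map on a product of compact parameter spaces, then push Hausdorff convergence through this map by uniform continuity. Identify $\SI^3$ with the unit sphere $S^3 \subset \R^4$ and let $\hat A, \hat B, \hat A_i, \hat B_i$ denote the corresponding compact subsets; the geometric convergence $A_i \to A$ and $B_i \to B$ lifts to Hausdorff convergence $\hat A_i \to \hat A$ and $\hat B_i \to \hat B$ in $S^3$. Put $\Delta = \{(c_1, c_2) \in \R^2 : c_1, c_2 \geq 0, \ c_1 + c_2 = 1\}$, and whenever $c_1 \hat a + c_2 \hat b \neq 0$ define
\[ \Phi(\hat a, \hat b, c_1, c_2) := [c_1 \hat a + c_2 \hat b] \in \SI^3. \]
By \eqref{eqn:join}, $J(A,B) = \Phi(\hat A \times \hat B \times \Delta)$, and similarly $J_i = \Phi(\hat A_i \times \hat B_i \times \Delta)$ whenever the argument avoids the zero vector.

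The first step is to make the setup uniform in $i$. Since $A$ and $\mathcal{A}(B)$ are disjoint compact subsets of $\SI^3$, pick $\eps_0 > 0$ with $\bdd(A, \mathcal{A}(B)) \geq 2\eps_0$. Hausdorff convergence yields an index $I_0$ such that, for $i \geq I_0$, both $A_i \subset N_{\eps_0/2}(A)$ and $\mathcal{A}(B_i) \subset N_{\eps_0/2}(\mathcal{A}(B))$ hold; hence $A_i \cap \mathcal{A}(B_i) = \emp$ and $J_i$ is well defined. Using the identity
\[ \| c_1 \hat a + c_2 \hat b \|^2 = 1 - 2 c_1 c_2 (1 - \hat a \cdot \hat b), \]
the uniform separation of $\hat a$ from $-\hat b$ produces a lower bound $\| c_1 \hat a + c_2 \hat b \| \geq \eta > 0$ valid for all $i \geq I_0$, all $\hat a \in \hat A_i \cup \hat A$, $\hat b \in \hat B_i \cup \hat B$, and $(c_1, c_2) \in \Delta$. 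Consequently $\Phi$ is jointly uniformly continuous on all the relevant parameter domains.

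Finally I would establish the two inclusions defining Hausdorff convergence $J_i \to J$. For $\limsup J_i \subset J$, any $p_{i_k} \in J_{i_k}$ equals $\Phi(\hat a_{i_k}, \hat b_{i_k}, c_1^{i_k}, c_2^{i_k})$ for some choice of arguments; extract a subsequence so that $\hat a_{i_k} \to \hat a \in \hat A$, $\hat b_{i_k} \to \hat b \in \hat B$, $(c_1^{i_k}, c_2^{i_k}) \to (c_1, c_2) \in \Delta$, and continuity of $\Phi$ places the limit in $J$. For $J \subset \liminf J_i$, given $p = \Phi(\hat a, \hat b, c_1, c_2) \in J$, use Hausdorff convergence of $\hat A_i$ and $\hat B_i$ to pick $\hat a_i \in \hat A_i, \hat b_i \in \hat B_i$ with $\hat a_i \to \hat a$, $\hat b_i \to \hat b$; then $\Phi(\hat a_i, \hat b_i, c_1, c_2) \in J_i$ converges to $p$. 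Compactness together with a standard contradiction-and-subsequence argument then upgrades these statements to two-sided uniform Hausdorff convergence. The main obstacle—quantitative rather than conceptual—is the uniform bound $\eta > 0$: it is precisely where the hypothesis $A \cap \mathcal{A}(B) = \emp$ enters quantitatively, and without it the projective normalization hidden in $\Phi$ could degenerate in the limit and destroy continuity.
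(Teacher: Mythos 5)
Your argument is correct and follows essentially the same strategy as the paper's (very brief) proof: both realize the join as the continuous image of a compact parameter space — the paper parametrizes the segments joining points of $A_i$ to points of $B_i$ by $\bdd$-arclength, you use barycentric coefficients on $\Delta$ — and push Hausdorff convergence through by uniform continuity. Your version fills in the details the paper leaves implicit, in particular the uniform lower bound $\|c_1\hat a + c_2\hat b\| \geq \eta > 0$ coming from the separation $A \cap \mathcal{A}(B) = \emp$, which is exactly what makes $\Phi$ well-defined and uniformly continuous on the relevant domain.
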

\begin{proof} 
Choose $\eps> 0$ so that $N_\eps(A)$ and $N_\eps(B)$ are properly convex and 
$N_\eps(A) \cap (N_\eps(B))_{-} =\emp$. 
Choose $I_0$ so that $A_n \subset N_\eps(A), B_n \subset N_\eps(B)$ for $n > I_0$. 
Clearly, $A_n$ and $B_n$ have no antipodal pair of points for $n > I_0$. 
One can parameterize each geodesic segment connecting a point of $A_n$ to that of $B_n$ 
by arclength in $\bdd$. From this, the lemma follows. 
\end{proof}



\section{Affine $3$-dimensional Lorentzian geometry} \label{sec:Lspace}

\subsection{The $3$-dimensional Minkowski space}  
The Lorentzian space $\Lspace$ is the affine space $\R^3$ equipped 
with a nondegenerate bilinear form $\cdot$
of signature $1,1, -1$. If we choose the origin in $\Lspace$, we obtain 
the Lorentzian vector space $\V$. 

One identifies $\R^3$ with  
$\Lspace$ and the complement of a subspace of codimension one 
in $\rpthree$, so called the complete affine subspace.
$\R^3 = \Lspace$ lifts to a subspace of $\SI^3$ double-covering $\rpthree$. 
The closure of the lifted $\Lspace$ is a standard $3$-dimensional hemisphere $\He$,
and $\Lspace$ identifies with the open hemisphere $\He^o$.
The boundary $\SI^2_{\infty}$ of $\Lspace$ is the subspace of codimension one 
identifiable with the $2$-sphere of directions in $\R^3$.

The sphere $\SI^2_{\infty}$ corresponds to the hyperplane given by $t = 0$
and the origin of $\Lspace$ is given by $\llrrparen{ 0: 0: 0: 1}$\,, denoted by $O$. 
For this system of coordinates, the point $(x, y, z) \in \Lspace$ is given 
coordinates $\llrrparen{ x: y: z: 1}$. 

$\SI^2_{\infty}$ also has a homogeneous coordinate system $\llrrparen{ x: y: z}$ assigned 
to a vector $p$ in $\SI^2_{\infty}$ of unit length if $\frac{(x,  y, z)}{||(x, y, z)||} = p$. 
In the larger coordinate system $\llrrparen{ x: y: z}$ is identical with $\llrrparen{ x: y: z: 0}.$ 

We will also be using a fixed Euclidean metric $d_E$ on $\Lspace$ compatible with 
the affine coordinate system. It is of great importance that projective geodesics, spherical geodesics, and 
Euclidean geodesics on $\Lspace$ are the same ones up to parametrizations.  

To summarize, the subsets of $\SI^3$ identified are:  
\begin{align*} \label{eqn:ident} 
\SI^3 &= 
 \Big\{ \llrrparen{ x: y: z: t}\; \Big|\;  x, y, z, t \in \R, (x, y, z, t) \ne (0, 0, 0,0)\Big\},\\
\He  &= 
  \Big\{ \llrrparen{ x: y: z: t} \;\Big|\;  x, y, z,  t \in \R, t \geq 0, (x, y, z, t) \ne (0, 0, 0,0) \Big\}, \\
\R^3 & = \Lspace = \He^o = \Big\{ \llrrparen{ x: y: z: t} \;\Big|\; x, y, z, t \in R, t>0\Big\}, \\
&= \Big\{ \llrrparen{ x: y: z: 1} \Big| x, y, z \in \R\Big\}, \\
\SI^2_{\infty} & =  
 \Big\{\llrrparen{ x: y: z: 0} \;\Big|\; x, y, z \in \R, (x, y, z, 0) \ne (0, 0, 0, 0) \Big\}.
\end{align*}

%

\subsection{Lorentz isometries}
An orientation-preserving Lorentz isometry is represented as an affine transformation
\begin{align*} 
\Lspace &\xrightarrow{\gamma} \Lspace \\
\bx &\mapsto A  \bx +  \bb \end{align*}
for $\bx \in \Lspace$.

Such an element can be represented by a matrix
\[
\left(
\begin{array}{cc}
  A  &  \bb\\
  0   &  1 \\
\end{array}
\right) \in \GL(4, \R)
\]
with a nonsingular $3\times 3$-matrix $A$ in $\SO(2, 1)$ and $\bb\in\R^3$. 

The group $\Isom^+(\Lspace)$ of orientation-preserving Lorentzian transformations of $\Lspace$
is a subgroup of the group $\Aff^+(\Lspace)$ of orientation-preserving affine transformations of $\Lspace$. 
$\Aff^+(\Lspace)$ identifies with the subgroup of $\Aut(\SI^3)$ of 
orientation-preserving elements acting on $\R^3=\Lspace$,
preserving $\SI^2_{\infty}$, since any affine transformation of $\R^3$ extends to a projective automorphism 
of $\R\P^3$ and hence to an automorphism of $\SI^3$ analytically. 
We obtain embeddings
\[ \Isom^+(\Lspace) \hookrightarrow \Aff^+(\Lspace) \hookrightarrow \Aut(\SI^3). \]
The elements of respective groups are called {\em projective automorphisms}
of {\em Lorentzian} type and ones of {\em affine} type or just {\em Lorentzian isometries} or 
{\em affine transformations}. 

\subsection{Oriented Lorentzian vector spaces} \label{sub:orlo}
Let $(\V, \cdot,\Det)$  
denote a three-dimensional {\em oriented Lorentzian\/}
$\R$-vector space. That is, $\V\cong\R^3$ and is given a symmetric bilinear
form
\[\V \times \V \rightarrow  \R \hbox{ given by }  (\vv,\vu) \mapsto \vv \cdot \vu\] 
of index $1$, and a nondegenerate alternating trilinear form
\begin{align*}
 \V \times \V \times \V &\longrightarrow \R \\
(\vv,\vu, \vw) &\longmapsto \Det(\vv,\vu,\vw).\end{align*} 
The automorphism group of $(\V, \cdot, \Det)$ 
is the special orthogonal group $\SOto$.



The Lorentzian structure divides $\SI^2_{\infty}$ into three open domains 
$\Ss_+, \Ss_0,$ and $\Ss_-$, 
which are separated by two conics $\partial\Ss_+$ and $\partial\Ss_-$ corresponding
to the nullcone in $\V$.  
Let $\Ss_+\subset\Ss$ denote the set of future timelike directions; 
its boundary $\partial\Ss_+$ consists of future null directions. 
Similarly, let $\Ss_-\subset\Ss$ denote the set of past timelike directions with boundary 
$\partial\Ss_-$ consisting of past null directions. 
Both $\Ss_+$ and $\Ss_-$ are cells, interchanged by $\A$.  
The set $\Ss_0$ of spacelike directions forms an  $\A$-invariant annulus
bounded by
the disjoint union $\partial\Ss_+ \coprod \partial\Ss_-$.

Recall that $\Ss_+$ is the Beltrami-Klein model of the hyperbolic plane
where $\SOto^o$ acts as the orientation-preserving isometry group. 
Here the metric geodesics are precisely the projective geodesics and vice versa.  
(We will use the same term ``geodesics'' for both types of geodesics for ones in $\Ss_+$ and 
later $\Ss_-$.) 
The geodesics in $\Ss_+$ are straight arcs and $\partial\Ss_+$ forms 
the ideal boundary of $\Ss_+$. For a finitely generated discrete subgroup $\Gamma$ in 
$\SOto^o$, the surface $\Ss_+/\Gamma$ has a complete hyperbolic structure  
as well as an $\rptwo$-structure with the compatible geodesic structure. 

Alternatively, we can use $\Ss_-$ as the model and 
$\Ss_-/\Gamma$ has a complete hyperbolic structure as well as
an $\rptwo$-structure.
Since $\Ss_+$ and $\Ss_-$ are properly convex, these are examples of 
properly convex $\rptwo$-surfaces.

\subsection{Null half-planes}\label{sub:nullh}
Let $\mathcal N$ denote the {\em nullcone\/} in $\V$, that is, the collection 
of all vectors $\vv\in\V$ with $\vv\cdot\vv = 0$. 
Its projectivization $\P({\mathcal N} - \{O\})$ consists of all {\em null lines\/} in $\V$. 
Suppose $\vv\in {\mathcal N} - \{O\}$.
Its orthogonal complement  $\vv^\perp$ is a 
{\em null plane\/} which contains the line $\R\vv$. The line $\R\vv$
separates $\vv^\perp$ into two half-planes. 

The orientation on $\V$ determines an $\SOto$-invariant way to uniquely associate a component of 
$\vv^\perp - \R\vv$ to $\R\vv$ as follows.
Since $\vv\in {\mathcal N}$ holds, its direction $\llrrparen{ \vv}$ lies in either $\partial\Ss_+$ or 
$\partial\Ss_-$. Choose an arbitrary element $\vu$ of $\Ss_+$ or $\Ss_-$ 
so that the directions of $\vv$ and $\vu$ both lie in 
$\clo(\Ss_+)$ or $\clo(\Ss_-)$ respectively.
(For example $\vu = (0, 0, \pm 1)$ would be sufficient.)
Define the {\em null half-plane $\W(\vv)$}
(or the {\em wing\/}) associated to $\vv$\/ as:
$$
\W(\vv) := \left\{ \vw\in\vv^\perp \mid \Det(\vv,\vw,\vu) > 0\right\} \subset \vv^\perp - \R\vv.
$$
Since the Lorentzian product $\vu \times \vv$ is in $\W(\vv)$, 
the directions in the wing satisfy the right-hand rule. 
Since $\W(\vv) = \W\big(\vv_{-})$ holds, 
the null half-plane $\W(\vv)$ depends only on $\R\vv$.
The corresponding set of directions is the open arc 
\[ \varepsilon (\llrrparen{\vv}) := \llrrparen{\W(\vv)} \] 
in $\Ss_0$ joining $\llrrparen{\vv}$ to  
its antipode  $\llrrparen{\vv_{-}}$. 
Since $\vv^\perp$ is tangent to $\mathcal N$, the arc $\varepsilon\big(\llrrparen{\vv}\big)$ is tangent to  $\partial \Ss_+$. 
The orientation of $\partial\Ss_+$ induced from $\Ss_+$
agrees with the orientation of $\varepsilon(\llrrparen{\vv})$ away from $\llrrparen{\vv}.$

The corresponding map 
\[
\llrrparen{\vv}\longmapsto \varepsilon\big(\llrrparen{\vv}\big) \] 
is an $\SOto$-equivariant map 
\[ \partial\Ss_+ \ra \mathcal{S}\] 
where $\mathcal S$ denotes the set of half-arcs of form 
$\varepsilon\big(\llrrparen{\vv}\big)$ for $\llrrparen{ \vv } \in \partial\Ss_+$. 
The arcs $\varepsilon\big(\llrrparen{ \vv}\big)$ for $\llrrparen{ \vv } \in \partial\Ss_+$ foliate $\Ss_0$. 
We can obtain all arcs $\varepsilon\big(\llrrparen{\vv}\big)$ from 
$\varepsilon(\llrrparen{ 0:1:1})$ by an $\SI^1$-action fixing $\llrrparen{ \pm (0: 0: 1)}$
for a subgroup $\SI^1$ of $\SOto$.
Let us call the foliation $\mathcal F$. 

Hence $\Ss_0$ has an $\SOto$-equivariant 
quotient map
\[ 
\Ss_0 \xrightarrow{\Pi}   \P({\mathcal N} - \{O\}) \cong \SI^1
\]
and $\varepsilon\big(\llrrparen{\vv}\big) = \Pi^{-1}([\vv])$ for each $\vv \in {\mathcal N} -\{O\}$.

Consider a future-pointing null vector of Euclidean length $\sqrt{2}$:
\[
\vn_\theta := \left[\begin{array}{ccc} \cos(\theta) \\ \sin(\theta) \\ 1 \end{array}\right] \hbox{ for } \theta \in [0, 2\pi).
\]
Then $\W(\vn_\theta)$ consists of  all
\[\vp_\theta(t,s) := t \vn_\theta + s \left[\begin{array}{c} -\sin(\theta) \\ \cos(\theta) \\ 0 \end{array}\right]
\hbox{ for } t \in\R, s > 0.\] 
The arc $c_\theta := \varepsilon\big(\llrrparen{\vn_\theta} \big)$
on the sphere of directions $\SI^2_{\infty}$ is parametrized by unit vectors
\[\frac1{\sqrt{2}} \vp_\theta(t, \sqrt{1-t^2}) \hbox{ as } -1< t < 1.\]
(As an element of $\SI^3$, its homogeneous coordinates equal 
\[\llrrparen{ \frac1{\sqrt{2}} \vp_\theta(t, \sqrt{1-t^2}): 0} \]
where $\frac1{\sqrt{2}} \vp_\theta(t, \sqrt{1-t^2})$ is used as the first three coordinates.)


\begin{figure}[ht]
\floatbox[{\capbeside\thisfloatsetup{capbesideposition={right,top},capbesidewidth=5.5cm}}]{figure}[\FBwidth]
{\caption{{\small The tangent geodesics to disks $\Ss_+$ and $\Ss_-$  in the unit sphere $\SI^2_{\infty}$ embedded in $\R^3$ as mapped 
by the normalization map of \eqref{eqn:normalization}.}}\label{fig:3dfig}}
{\includegraphics
[width=8cm]{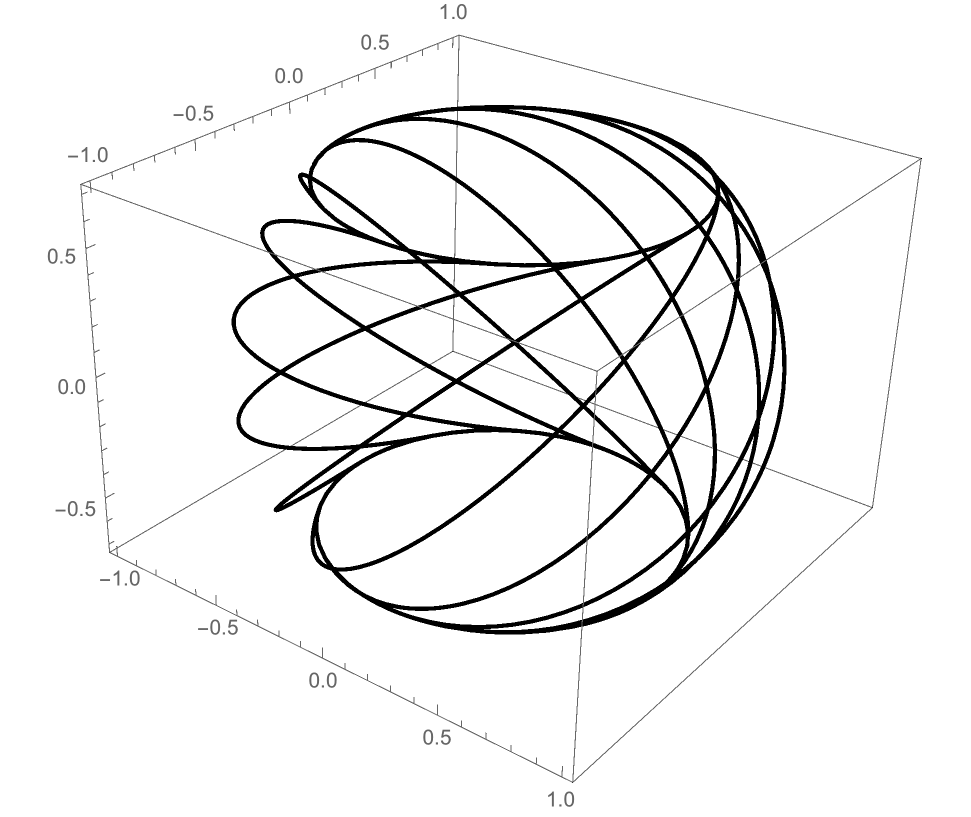}}
\end{figure}

\begin{figure}[htb]
\floatbox[{\capbeside\thisfloatsetup{capbesideposition={right,top},capbesidewidth=5.5cm}}]{figure}[\FBwidth]
{\caption{ {\small The tangent geodesics to disks $\Ss_+$ and $\Ss_-$ in the stereographically projected $\SI^2_{\infty}$ from $(0, 0, -1)$.
Radial arcs are geodesics. The inner circle represents the boundary of $\Ss_+$. The arcs of form $\varepsilon(x)$ for $x \in \partial\Ss_+$
are leaves of the foliation $\mathcal F$ on $\Ss_0$.}} \label{fig:figure2}}
{\includegraphics
[width=8cm]{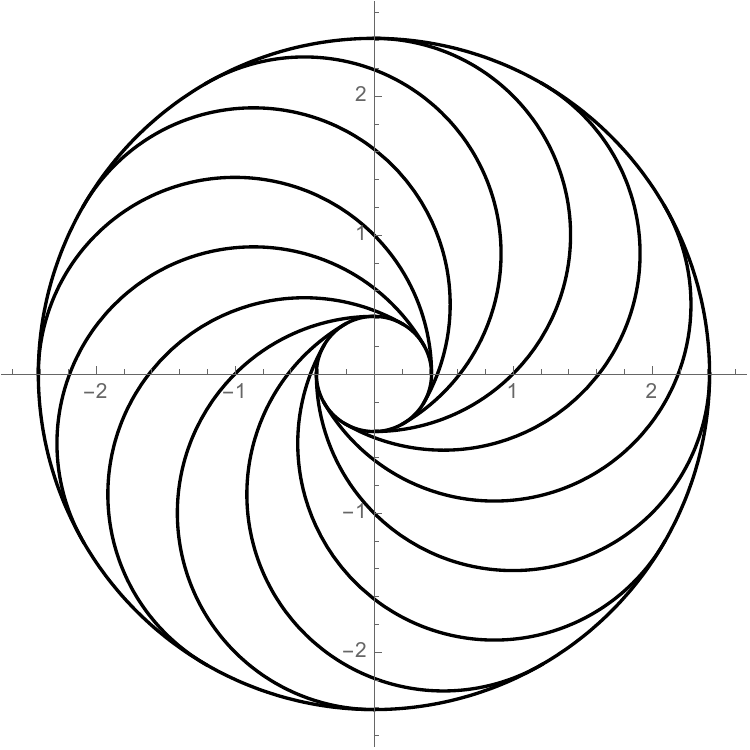}}
\end{figure}



The other component of 
$\vv^\perp - \R\vv$ could also be used as a wing
$\W(\vv)$, but for the opposite orientation on $\V$. 
Alternatively, these {\em negatively oriented null half-planes\/}
are the images of the positively oriented null half-planes
under the antipodal map $\A$ (which reverses orientation in
dimension three).  This phenomenon appears in the theory
of crooked planes, where one must choose a class of 
orientations for crooked planes, as well as the Margulis invariant,
where the choice of orientation plays a crucial role.

Since we have no need to consider negatively oriented null half-planes,
we henceforth restrict our attention to positively oriented null half-planes
in this paper.


\subsection{Affine boosts}\label{sec:AffineBoost}
\begin{defn}
An isometry of $\Lspace$ is an ({\em affine}) {\em boost\/} if its linear part is a positive hyperbolic element of $\SOto$ and it acts freely 
on $\Lspace$.
\end{defn}

Suppose that $\gamma\in\Isom^+(\Lspace)$ is an affine boost.
Then $\gamma$ preserves a unique line in $\Lspace$ which we denote
$\Axis(\gamma)$. 
Furthermore $\Axis(\gamma)$ is spacelike and 
the restriction of $\gamma$ to $\Axis(\gamma)$ 
is a nonzero translation. In a suitable coordinate system,
we can take $\Axis(\gamma)$ to be the $y$-axis, 
in which case $\gamma$ is the affine transformation:
\begin{equation} \label{eq:AffineBoost}
p \stackrel{\gamma}\longmapsto
\bmatrix e^l & 0 & 0 \\ 0 & 1 & 0 \\ 0 & 0 & e^{-l} \endbmatrix
p +
\bmatrix 0 \\  \alpha \\ 0\endbmatrix 
\end{equation}
where
\[
\bmatrix e^l & 0 & 0 \\ 0 & 1 & 0 \\ 0 & 0 & e^{-l} \endbmatrix
\]
is the linear part ${\L}(\gamma)$ of $\gamma$
and $p \in \Lspace$.
We say that $\gamma$ is in {\em standard form\/} if $\Axis(\gamma)$ is the $y$-axis.
In that case $\gamma$ is given by \eqref{eq:AffineBoost} as above.

\subsubsection{Oriented axes}
As observed by Margulis~\cite{Mar}, the axis of an affine boost $\gamma$ admits
a {\em canonical\/} orientation, induced by the ambient orientation of $\Lspace$.

Let $g = {\L}(\gamma)\in\SOto$ be positive hyperbolic. 
Then since $g$ is an isometry, and unimodular, the eigenvalues of $g$ are 
$\lambda,1, \lambda^{-1}$ where $\lambda  > 1$. 
The eigenspaces for eigenvalues $\lambda,\lambda^{-1}$ are null
(since $g$ is an isometry); choose respective eigenvectors $\vv_+(g), \vv_-(g) \in\V$
with the same causal character (that is, they are either both future-pointing
or both past-pointing). 
The $1$-eigenspace $\vv_0(g)\subset \V$ is spacelike and orthogonal to 
$\vv_+(g)$ and $\vv_-(g)$. We define $\vv_+(\gamma):= \vv_{+}(g), \vv_0(\gamma):=\vv_{0}(g), \vv_-(\gamma):=\vv_{-}(g)$. 

\begin{defn} \label{defn:neutvec} 
The {\em neutral eigenvector\/} of $\gamma$ is the unique unit-spacelike eigenvector 
$\vv_0(\L(\gamma))$ such that $\{\vv_+(\gamma), \vv_0(\gamma), \vv_-(\gamma)\}$ is a positively oriented basis
of $\V$. (See \cite{DG2} for details.)
\end{defn}

Since $\Axis(\gamma)$ is parallel to $\vv_0\big({\L}(\gamma)\big)$,
the neutral eigenvector $\vv_0(\gamma)$ of ${\L}(\gamma)$ defines an orientation on $\Axis(\gamma)$.
Denote the corresponding {\em oriented geodesic\/} by $\hAxis(\gamma)$.
By \eqref{eq:AffineBoost}, the restriction of $\gamma$ to $\Axis(\gamma)$ is a translation.
Since $\Axis(\gamma)$ is spacelike, the {\em displacement\/} of $\gamma$ along $\hAxis(\gamma)$ is
a well-defined real number
\[\mu(\gamma):= (\gamma(p) - p) \cdot \vv_0(\gamma) = \alpha \in \R,  p \in \Lspace,\]
called the {\em Margulis invariant.}  
For the basic properties of this invariant, see  \cite{Mar}, \cite{GLM}, \cite{Abels}, \cite{CharetteDrummIso}, 
\cite{CDG}.

\subsubsection{Invariant planes}
The weak-stable plane $W^{ws}(\gamma)\subset\Lspace$ 
is the affine subspace containing $\Axis(\gamma)$ and parallel to  the plane spanned by $\vv_0(\gamma)$ and $\vv_-(\gamma)$.  
Points in this affine plane asymptotically approach $\Axis(\gamma)$:
If $p\in W^{ws}(\gamma)$, then 
\begin{align*} 
\bdd\big(\, \gamma^n(p), \llrrparen{ \vv_0(\gamma):0}\big) &\ra 0, \\
\bdd\big(\,\gamma^n(p),\,  \Axis(\gamma)\, \big) & \ra 0 
\end{align*}
as $ n\ra \infty$.
\subsubsection{Classification of affine boosts}
An affine boost $\gamma$ is completely determined by the oriented spacelike line $\widehat\Axis(\gamma)$ and
the two real numbers $\ell > 0$ and $\alpha\neq 0$. 
Its conjugacy class is determined by the pair $(\ell,\alpha)\in\R^2$. 


\subsection{Projective boosts} \label{subsec:pboost}

\begin{defn}
A {\em projective boost\/} is a collineation of $\SI^3$ 
which extends an affine boost $g$ on $\Lspace$. 
Namely, an affine boost $\Lspace\xrightarrow{g}\Lspace$ extends
to a projective transformation of $\mathbb{P}^3$ and lifts to a
collineation $\gamma$  of the double covering $\SI^3$.
\end{defn}

\subsubsection{Standard form of a projective boost}\label{sec:StandardBoostForm}
A {\em standard projective boost\/} is the extension of
an affine boost in standard form to $\SI^3$.
Explicitly, $\gamma$ is given by a $4\times 4$-matrix  in a suitable coordinate system 
\begin{equation} 
\label{eq:StandardProjectiveBoostForm} 
\bmatrix 
e^\ell & 0 & 0 & 0 \\ 0 & 1 & 0 & \alpha  \\ 0 & 0 & e^{-\ell} & 0 \\ 0 & 0 & 0 & 1
\endbmatrix 
\end{equation} 
where $\ell > 0$ 
and $\alpha \neq 0$, which will be related to the Margulis invariant.  
It restricts to the affine boost
defined by \eqref{eq:AffineBoost} on the affine patch 
\[ \bmatrix x \\ y \\ z \endbmatrix \longmapsto \llrrparen{  x : y: z : 1}. \] 
The six fixed points on $\SI^2_{\infty}$ are:  
\begin{align*}
x^+_\pm &:= \llrrparen{ \pm 1:0:0:0}, \\
x^0_\pm &:= \llrrparen{ 0:\pm 1:0:0}, \\
x^-_\pm &:= \llrrparen{ 0:0: \pm 1:0}. 
\end{align*}
in homogeneous coordinates on $\SI^2_{\infty}$. 
For $l \ne 0, \alpha \ne 0$, these are the only fixed points on $\SI^3$. 
Also, $\gamma$ acts on the subspace $\sigma := [0:*:0:*]$. 

Such a collineation induces the affine isometry of $\Lspace$ defined by
\eqref{eq:AffineBoost}.
%
For a given projective boost, 
denote by $x^+(\gamma), x^0(\gamma), x^-(\gamma),$  and $\sigma(\gamma)$ the points going to 
$x^+, x^0, x^-$  and the subspace $\sigma$ when $\gamma$ is put in standard form.
The span 
\[ 
\W^{ws}(\gamma) :=  \o{span}\big( x^-(\gamma) \cup \sigma(\gamma)\big) \]  
is the subspace of $\SI^3$ extending the weak-stable plane $W^{ws}(\gamma)\subset\Lspace$.
Under the action of $\gamma^{n}$ as $n \ra +\infty$, 
images of points on $W^{ws}(\gamma)\subset\Lspace$ limit towards  $x^0(\gamma)$  along the line 
\[ \sigma(\gamma) - \{x^0(\gamma), x^0(\gamma)_-\} \] 
where $\sigma(\gamma)$ is the subspace spanned by $\Axis(\gamma)$ in $\Lspace$.

\subsubsection{The action of projective-boost automorphisms} 
Recall that $\SI^3$ is the quotient $\R^4 -\{O\}/\sim$ where 
$\sim$ is given by $\vv \sim \vw$ iff $\vv = s \vw$ for $s > 0$. 
Recall that homogeneous coordinate system of $\SI^3$ is given by 
setting a point $p$ of $\SI^3$ to have coordinates 
$\llrrparen{ x: y:  z: t}$ where $(x, y, z, t)$ divided by its Euclidean norm represent $p$ 
as a unit vector in $\SI^3$. 
We identified $\Lspace$ with the open affine space given by $t >0$.
We use the coordinates so that $\SI^2_{\infty}$ corresponds to the hyperplane given by $t = 0$
and the origin $O$ of $\Lspace$ is given by $\be_4$.
The closure of $\Lspace$ is the $3$-dimensional hemisphere $\He$ with $\partial \He = \SI^2_{\infty}$. 
For this system of coordinates, $(x, y, z) \in \Lspace$ is given 
homogeneous coordinates $\llrrparen{ x: y:  z: 1}$ and coordinates in $\SI^2_{\infty}$
denoted $\llrrparen{ x: y:  z: 0}$.

\begin{figure}[t!] 
\labellist
\small\hair 2pt
\pinlabel $\be_3$ at 190 335
\pinlabel $\be_{3-}$ at 185 105
\pinlabel $\be_2$ at 320 195
\pinlabel $\be_{2-}$ at 70 210
\pinlabel $\eta_+$ at 310 175
\pinlabel $\eta_-$ at 73 240
\pinlabel $\be_1$ at 180 160
\pinlabel $\be_{1-}$ at 210 260
\pinlabel $x$ at 30 250
\pinlabel $y$ at 135 27
\pinlabel $z$ at  280 150
\endlabellist
\centering

\centerline{\includegraphics[height=11cm]{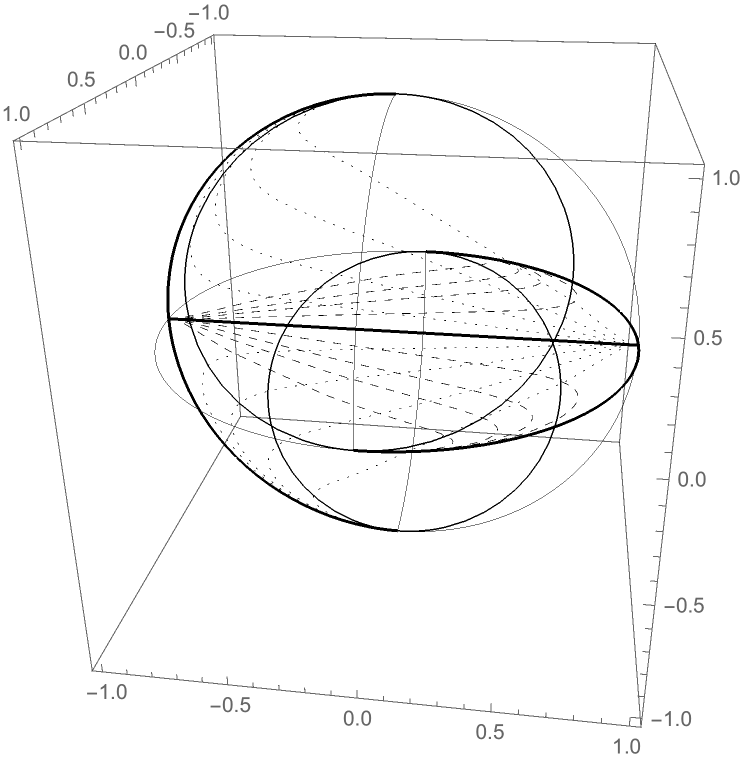}}
\caption{The action of a projective boost $\hat g$ 
on the $3$-hemisphere $\He$ where the boundary sphere 
$\SI^2_{\infty}$ is the unit sphere with center 
$(0,0,0)$ here as mapped by the normalization map \eqref{eqn:normalization}. 
The mathematica file is given at \cite{act2}. 
\label{fig:action}}

\end{figure}


Recall from \S \ref{subsec:pboost},
a projective automorphism $g$ of form 
\begin{equation}
\left[\begin{array}{cccc} 
\lambda & 0 & 0 &0  \\ 
0          & 1 & 0 & k \\ 
0          & 0 & \frac{1}{\lambda} & 0  \\ 
0          & 0 & 0 & 1
\end{array} \right] \lambda > 1, k \ne 0 \label{eqn:qA}
\end{equation}
under a homogeneous coordinate system of $\SI^3$
is a projective-boost. We call the above matrix the 
{\em standard boost matrix} with parameters $(\lambda,k)$.
In this coordinate system, $g$ acts on an open $3$-dimensional hemisphere, 
to be identified with $\Lspace$ given by $t > 0$,  
leaving invariant the following subspaces: 
\begin{align*}
\Axis(g) & = \sigma_{g} \cap \Lspace,  & \sigma_{g} :=\{\llrrparen{ x: y: z: t } | x =0, z=0\}, \\
W^{ws}(g)  & = S_g \cap \Lspace,  & \W^{ws}=S_{g} := \{\llrrparen{ x: y: z: t }| x =0 \}, \\
W^{wu}(g)  & = U_g \cap \Lspace, &  U_{g} := \{ \llrrparen{ x: y: z: t } | z=0\}.
\end{align*}
On $S_g \cap \He^o$, $g$ contracts vectors along a direction by a constant $1/\lambda$
and $g$ expands the vectors along a direction by a constant $\lambda$ on 
\[
\W^{ws}(g) \cap \He^o= W^{ws}(g) \]
assuming $\lambda > 1$. 
The subspace $S_g$ given by $x =0$ is the {\em weak-stable subspace}, 
and the subspace $U_g$ given by $z=0$ is the {\em weak-unstable subspace}.
(See \S \ref{sec:StandardBoostForm}.)


The great semicircles 
\begin{align*}
\eta_+ & := \ovl{\be_{1} \be_{2}(\be_{1-})} \\
\eta_- &:= \ovl{\be_{3}(\be_{{2}-})(\be_{{3}-})}
\end{align*}
are the the {\em attracting arc,\/} and {\em repelling arc\/}
for $g$, respectively. Their endpoints $\be_1, \be_{1-}, \be_3, \be_{3-}$
lie on the null latitudes of $\SI^2_{\infty}$, and both $\eta_+^{o}$ and $\eta_-^{o}$
are fibers of the collapsing map 
\[ \Ss_{0} \xrightarrow{\varepsilon} \rpo.\] 



\subsection{Sequences of projective boosts}

Let $\{\gamma_n\}$ be a sequence of projective boosts in $\Aut(\SI^3)$. 
A {\em rank} of an element of $\Ss(\Mat_{4}(\R))$ is the rank of any linear map $\R^4 \ra \R^4$ representing it. 
The compactness of $\Ss(\Mat_{4}(\R))$ implies that convergent subsequences exist.
Suppose that 
\[ \gamma_\infty = \lim_{n\to+\infty} \gamma^n \] 
is a projective endomorphism in $\Ss(\Mat_{4}(\R))$ of rank one.
(In general, the limit might have rank $2$ or $3$.)
Then the  undefined hyperplane of $\gamma_\infty$ equals 
a subspace $\W^{ws}_\infty$ of codimension one and the image of $\gamma_\infty$ equals the pair $x^+_{\infty}, x^+_{\infty-}$ where
\begin{align*}
\W^{ws}(\gamma_n) &\longrightarrow \W^{ws}_\infty, \\
x^+(\gamma_n) &\longrightarrow x^+_\infty
\end{align*}
as $n\longrightarrow +\infty$.

More generally, suppose that $\{\gamma_n\}$ is a sequence in $\Aut(\SI^3)$
converging to a projective endomorphism $\gamma_\infty$ in $\Ss(\Mat_{4}(\R))$ of rank one. 
Let $N(\gamma_\infty)$ denote the kernel of  $\gamma_\infty$. 
We can choose a subsequence of $\{\gamma_n\}$ so that the following hold: 
\begin{itemize}
\item the corresponding subsequence of the attracting fixed points $\{x^+(\gamma_n)\}$ 
converges to a point in the image $I(\gamma_\infty)$ and, 
\item the corresponding subsequence of the extended weak-stable subspaces  
$\{\W^{ws}(\gamma_n)\}$ converges to the undefined subspace $\Ss(N(\gamma_\infty))$. 
\end{itemize} 

\subsubsection{Convergence Lemma}
We will prove the following lemma for a properly convex $K$ because the proof is much simpler. 
But this is true for a general compact subset $K$ by the proof of Proposition \ref{prop:propdisc}.     
\begin{lem} \label{lem:Pi0}
Let $g_{\lambda, k}$ denote the projective boost on $\He$ defined by the standard boost matrix in \eqref{eqn:qA}
for homogeneous coordinates with coordinate functions $x, y, z, t$  
and let $\SI^2_0 = \W^{ws}(g_{\lambda, k})$ denote the subspace given by $x = 0$.
We assume that $k \geq 0, \lambda > 0$.
Then as $\lambda, k \ra +\infty$ where $k/\lambda \ra 0$\,{\rm :}
\begin{itemize} 
\item[(a)] $g_{\lambda, k}| \He - \SI^2_0$ converges  in the compact-open topology 
to a projective endomorphism 
\[ \llrrparen{ x: y: z: t}\xrightarrow{\Pi_0} 
\llrrparen{ {\mathsf{sgn}}(x): 0: 0: 0} = \be_{1} \hbox{ or } \be_{1-}, t \ge 0.\]  
\item[(b)] 
$g_{\lambda, k}|(\SI^2_0 \cap \He) - \eta_-$ converges 
to a projective endomorphism 
\[ \llrrparen{ 0: y: z: t} \xrightarrow{\Pi_1} \be_2. \]
\item[(c)] 
For a properly convex compact set $K$ in $\He - \eta_-$, 
a subsequence of $\{g_{\lambda, k}(K)\}$ converges to 
\begin{itemize} 
\item $\eta_+ =  \ovl{\be_{1}\be_{2}\be_{1-}}$ provided $K$ meets both components of 
\[\He -(\SI^2_0 \cap \He)\]    
\item otherwise to 
\begin{itemize} 
\item[(i)] $\{\be_2\}$ if $K \subset (\SI^2_0 \cap \He) - \eta_-$, 
\item[(ii)] $\{\be_{1}\}$ or $\{\be_{{1-}}\}$ if $K \cap (\SI^2_0 \cap \He) = \emp$, or else
\item[(iii)] one of the segments
\[ \ovl{\be_{1}\be_{2}}, \ovl{\be_{1-}\be_{{2}}}.\]
\end{itemize} 
\end{itemize} 
\end{itemize}
\end{lem}
\begin{proof} 
The first two items follow by normalizing the above matrix by dividing it by the maximal norm of the entries. 


 
So, we now suppose that $K \cap \SI^2_0 \ne \emp$, and 
$K$ meets both components of $\He - \SI^2_0$. 
For a properly convex compact  subset $K$ in $\He - \eta_-$, 
let $K_1$ be  the nonempty properly convex compact set $K \cap \SI^2_0 - \eta_-$.
We can take a closed $2$-dimensional hemisphere $H$ with boundary $\partial H \subset \SI^2_{\infty}$  
so that a component $H'$ of $\He - H$ satisfies 
\begin{align*}
K & \subset H', \\  
H' \cap \eta_{-}&= \emp, \\
H & = \partial_{\He} H'
\end{align*} 
which is the boundary of $H'$ in the relative topology of $\He$. 
%

We can operate as below:
\begin{itemize}
\item there exist a properly convex compact domain $D_1$ in a component of $H' - \SI^2_0$ with $x > 0$ and
another one $D_2$ in a component of $H' - \SI^2_0$ with $x < 0$ so that 
\begin{align*}
 K \subset J &:=\Join(D_1, D_2), \\
 J \cap \eta_- &= \emp. \end{align*}
\item For \[ K_2:= J \cap \SI^2_0 \supset K_1, \] 
the join $J_1:= \Join(K_2, D_1)$ and the join $J_2:= \Join(K_2, D_2)$
satisfy $J = J_1 \cup J_2$. 
\end{itemize}

By the second item of  
Lemma~\ref{lem:Pi0}, 
$\{g_{\lambda, k}(K_2)\}$ converges to $\be_{2}$ as $\lambda, k \ra \infty$.
Since 
\[\{g_{\lambda, k}(D_1) \} \ra \{\be_{1}\} \hbox{ and } \{g_{\lambda, k}(D_2)\} \ra \{\be_{{1-}}\}\] 
as $\lambda, k \ra \infty$ by above, 
\begin{itemize}
\item $\{g_{\lambda, k}(J_1)\} \ra s_1 = \ovl{\be_{1} \be_{2}}$ in $\SI^2_{\infty}$
and  
\item $\{g_{\lambda, k}(J_2)\} \ra s_2 = \ovl{\be_{{1-}}\be_{2}}$ in $\SI^2_{\infty}$ hold
by Lemma \ref{lem:geoconv} respectively. 
\end{itemize}
Hence, we conclude $\{g_{\lambda, k}(J)\} \ra \eta_+$.  

Let $p_{1}\in K_{1}$ and let $p_{+}\in D_{1}, p_{-} \in D_{2}$ be two points. 
Then 
\[\{g_{\lambda, k} (\ovl{p_{+}p_{1}} \cup \ovl{p_{-}p_{1}}) \} \ra \eta_{+} \hbox{ as } \lambda, k \ra \infty.\] 
Since \[ \ovl{p_{+}p_{1}} \cup \ovl{p_{-}p_{1}}\subset  K \subset J, \]
we obtain $\{g_{\lambda, k}(K)\} \ra \eta_{+}$ as $\lambda, k \ra \infty$
by Section \ref{subsec:space}.


So, we now suppose that $K \cap \SI^2_0 \ne \emp$ and 
$K$ meets only one component of $\He - \SI^2_0$. Then 
the arguments are simpler and $\{g_{\lambda, k}(K)\}$ converges to 
$\ovl{\be_{1}\be_{2}} \hbox{ or } \ovl{\be_{1-}\be_{{2}}}$
as $\lambda, k \ra \infty$. 

\end{proof} 


We remark that our proof of Proposition 
\ref{prop:propdisc} generalizes the above one,
using dynamical properties of Margulis spacetimes.



\section{Dynamics and hyperbolic geometry} \label{sec:hyps}

\subsection{$\rptwo$-structures on surfaces} \label{subsec:rpt}


Let $\vth$ correspond to an element $A$ of $\SL(3, \R)$ diagonalizable with three distinct positive 
eigenvalues. 
Then the induced $\vth' \in \Aut(\SI^2_{\infty})$ has six fixed points $a, a_-, r, r_-, s, s_-$
and three great circles $l_1, l_2,$ and $l_3$ so that 
\begin{align*}
\{a, r, a_-, r_-\} &\subset l_1, \\
\{r, s, r_-, s_-\} & \subset l_2, \\
\{a, s, a_-, s_-\} &\subset l_3 .\end{align*}
We assume that $a$ corresponds to the largest eigenvalue and $r$ to the smallest one. 
Eight invariant open triangles compose
$\SI^2_{\infty}  - ( l_1 \cup l_2 \cup l_3)$. 
Let $\tri$ be the triangle with vertices $a, r, s$. Then 
\begin{align*}
(\tri \cup \ovl{ar}^o \cup \ovl{a s}^o)&/\langle \vth' \rangle, \\
(\tri \cup \ovl{ar}^o \cup \ovl{r s}^o)&/\langle \vth' \rangle \end{align*}
are both examples of compact annuli. As quotients of domains of $\SI^2_{\infty}$, 
they have $\rptwo$-structures. (See \S \ref{sub:rpnstr}.) 
Such an $\rptwo$-surface-with-boundary is called 
an {\em elementary annulus.} 
%
Let $\tri'$ be an adjacent triangle sharing $\ovl{ar}^o$ with $\tri$ in the boundary, 
and $\tri''$ be one sharing $\ovl{as}^{o}$ with $\tri$ in the boundary.  
Then 
\begin{align}\label{eqn:piann}
 (\tri \cup \tri' \cup \ovl{ar}^o \cup \ovl{as}^o \cup \ovl{as_-}^o) &/\langle \vth' \rangle,  \notag \\
 (\tri \cup \tri'' \cup \ovl{ar}^o \cup \ovl{ar_-}^o \cup \ovl{as}^o)  & /\langle \vth' \rangle 
 \end{align}
are examples of compact annuli again. 
An $\rptwo$-surface projectively diffeomorphic 
to one of these (for a choice of $\vth$ as above) is said to be a {\em $\pi$-annulus}. 
They are the union of 
two elementary annuli meeting at a boundary component.  
 (See \cite{cdcr1} and \cite{cdcr2} for more details.)


Let $\tilde S$ be the universal cover of an $\rptwo$-surface $S$ with 
a developing map $\dev:\tilde S \ra \rptwo$ and a holonomy homomorphism $h$.
For a circle $\SI^1$, 
a {\em closed geodesic} $c: \SI^1 \ra S$ 
is a closed curve where 
$\dev\circ \tilde c$ is a straight arc in $\rptwo$ for a lift $\tilde c: \R \ra \tilde S$ of $c$.
A closed geodesic is {\em principal} if for a lift $\tilde c$ to the universal cover $\tilde S$,
$\dev\circ \tilde c$ is an embedding to 
a straight arc connecting an attracting and a repelling fixed point
of $h(\gamma)$ for a deck transformation $\gamma$ of $\tilde S$
satisfying $\tilde c(t +2\pi) = \gamma \circ \tilde c(t)$.
(Here $\gamma$ is said to be the {\em corresponding} deck transformation of 
$\tilde c$ and $c$, and it exists uniquely.)


A disjoint collection  $c_1, \dots, c_m$ of simple closed geodesics {\em decomposes}
an $\rptwo$-surface $S$ into subsurfaces $S_1, .., S_n$ if each $S_i$ is the closure of a component of 
$S - \bigcup_{i=1,..,m} c_i$ where we do not allow a curve $c_i$ to have two one-sided neighborhoods in only one 
$S_j$ for some $j$.

In \cite{cdcr2}, we proved:
\begin{thm} \label{thm:dec}
Let $\Sigma$ be a compact orientable $\rptwo$-surface with principal 
geodesic or empty boundary and $\chi(\Sigma) < 0$. 
Then $\Sigma$ has a collection of disjoint simple closed principal geodesics
decomposing $\Sigma$ into properly convex 
$\rptwo$-surfaces with principal geodesic boundary and of negative Euler characteristic 
and/or $\pi$-annuli with principal geodesic boundary. 
\end{thm}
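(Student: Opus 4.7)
The plan is to work in the universal cover $\tilde\Sigma$ with its developing pair $(\dev, h)$ and to extract the decomposing curves from the obstructions to proper convexity of the developing image. First I would identify the candidate core curves: each $\pi$-annulus in $\Sigma$ contains a simple closed principal geodesic $c$ whose corresponding deck transformation $\gamma$ has holonomy $h(\gamma)$ diagonalizable with three distinct positive eigenvalues of the same type as $\vth$ in Section~\ref{subsec:rpt}, and whose developed lift traverses a $\pi$-length arc, i.e.\ it passes from a fixed point $a$ through another fixed point $r$ out to the antipode $a_-$ (or the analogous configuration), rather than merely the attracting-to-repelling segment $\overline{ar}$ characteristic of an elementary annulus. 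This length-$\pi$ phenomenon is precisely what the $\pi$-annulus construction isolates, and it is the local obstruction to proper convexity.

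Second, I would extract a maximal pairwise disjoint collection $\{c_1,\dots,c_m\}$ of non-isotopic simple closed principal geodesics of the above $\pi$-type. Existence of such a maximal collection follows from a Zorn-type argument once one knows that $\pi$-annuli in $\Sigma$ fall into only finitely many isotopy classes — a consequence of the compactness of $\Sigma$ together with the discreteness of the holonomy action — and an isotopy-normalization lemma ensuring that $\pi$-annuli sharing no core homotopy class can be made geometrically disjoint.

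Third, and this is what I expect to be the main obstacle, I would show that upon cutting $\Sigma$ along $c_1,\dots,c_m$ each resulting piece $S_j$ is either a $\pi$-annulus with principal geodesic boundary or a properly convex $\rpt$-surface with principal geodesic boundary. The proof of proper convexity of the non-annular pieces proceeds by contradiction: if $\tilde S_j$ did not develop injectively onto a properly convex domain of some affine chart, then the failure of convexity would manifest as a geodesic segment in $\tilde S_j$ developing to a $\pi$-length arc through a fixed point of a hyperbolic holonomy element, and one could construct from this an embedded $\pi$-annulus in the interior of $S_j$, violating the maximality of $\{c_i\}$. Carrying this through requires a careful analysis of the boundary behavior of $\dev|\tilde S_j$ together with the classification of diagonalizable elements of $\PGL(3,\R)$ summarized earlier, and leans on the convex-decomposition machinery of \cite{cdcr1} and \cite{cdcr2}.

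Finally, I would verify the Euler-characteristic bookkeeping. The $\pi$-annuli contribute zero to $\chi$, and the non-annular pieces cannot be disks or spheres because their boundary is principal geodesic and their interior develops to a properly convex open subset of an affine chart (precluding positive Euler characteristic). Since $\chi(\Sigma)<0$ by assumption and Euler characteristic is additive across the decomposition, each non-annular piece has $\chi(S_j)<0$, completing the stated conclusion.
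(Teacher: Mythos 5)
The paper does not prove Theorem~\ref{thm:dec} internally; it cites it to \cite{cdcr2}, so there is no in-paper proof to compare directly. Evaluating your proposal on its own terms, however, there is a genuine gap in the third step, and it is exactly where the substantial work of the cited two-paper series \cite{cdcr1,cdcr2} sits.

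You assert that if a cut piece $\tilde S_j$ fails to develop onto a properly convex domain, then ``the failure of convexity would manifest as a geodesic segment in $\tilde S_j$ developing to a $\pi$-length arc through a fixed point of a hyperbolic holonomy element,'' from which you would build an embedded $\pi$-annulus violating maximality. This begs the central question. Non-convexity of a developing image is not a priori localized to a single length-$\pi$ geodesic arc of that special hyperbolic type; the developing map can fail to be injective, fail to be convex along non-geodesic curves, or exhibit more diffuse non-convexity. The technical engine that makes your sketched contradiction work is the theory of \emph{crescents}: one first shows that a compact $\rpt$-surface with principal (or empty) boundary that is not properly convex contains a crescent (a half-disk whose arc-boundary has length $\pi$ and lies in the ideal set of the developing image), then analyzes how two crescents can intersect (disjoint, equal, or overlapping in a controlled way), and only then extracts from a maximal union of crescents a $\pi$-annulus or a $\Gamma$-invariant family giving the decomposing curves. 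Your proposal skips straight from ``not properly convex'' to ``there is a $\pi$-annulus in the interior,'' which is the output of that machinery, not an elementary consequence of the definition. As written, the argument is circular: you lean on ``the convex-decomposition machinery of \cite{cdcr1} and \cite{cdcr2},'' but Theorem~\ref{thm:dec} \emph{is} that machinery's conclusion.

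There is also a secondary, smaller imprecision in step 2: the existence of a maximal disjoint collection of non-isotopic simple closed curves on a compact surface is elementary topology (the complexity of the surface bounds their number) and does not require a Zorn argument or a count of isotopy classes of $\pi$-annuli; more importantly, you still need to know that the curves you choose can be isotoped to be simultaneously \emph{principal geodesics} and pairwise disjoint, which again comes out of the crescent analysis rather than a general-position argument. To repair the proposal you would need to develop (or explicitly import as a black box, with precise statements) the crescent existence lemma and the crescent intersection dichotomy; without those the contradiction at the heart of step 3 does not close.
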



\subsection{Complete hyperbolic surfaces and Fuchsian groups} \label{subsec:Fuch} 

Let $G$ be a discrete subgroup of projective automorphisms of $\Ss_+$ in $\Aut(\SI^2_{\infty})$. 
We assume that $G$ has no elliptic or parabolic elements. 
Here complete geodesics of the hyperbolic metrics are maximal straight lines in $\Ss_+$
and vice versa.

We suppose that $G$ is nonelementary; that is, it does not act on a pair of points 
in $\clo(\Ss_+)$. Then $\Ss_+/G$ has a complete hyperbolic metric induced from 
the Hilbert one. $\Ss_+/G$ is {\em geometrically finite} in the sense that 
$\Ss_+/G$ contains a compact surface bounded by closed geodesics that is the deformation retract of $\Ss_+/G$. 

Let $\Lambda$ denote the limit set of an orbit $G(x)$ for a point $x \in \Ss_+$, 
Since $G$ is a geometrically finite Fuchsian group,
$\Lambda$ is a Cantor subset of $\partial\Ss_+$ of Lebesgue measure zero 
and contains  the dense set of fixed points of nonidentity elements of $G$. 
Let us define $\Sigma_+ := \Ss_+/G$. 
We define $\tSigma'_+$ as $\clo(\Ss_+) - \Lambda$. 
(See Chapter 8 of Beardon \cite{Beardon} for the classical Fuchsian group theory used here.) 

Let  ${\mathcal{J}}$ be the set indexing the boundary components of $\tilde \Sigma'$. 
We denote the boundary components of $\tilde \Sigma'_+$ by $\partial_i \Ss_{+}, i \in {\mathcal{J}}$. Hence, 
\[ \tilde \Sigma'_+ = \Ss_+ \cup \bigcup_{i\in \mathcal{J}} \partial_i\Ss_{+} = \clo(\Ss_+) - \Lambda. \]
We obtain a compact surface  $\Sigma'_+  = \tSigma'_+/G$ containing open surface $\Sigma_+$ 
compactifying $\Sigma_+$ with boundary components diffeomorphic to circles. 

Note that for each element $g \in G$, either $g(\partial_i \Ss_{+}) \cap \partial_i \Ss_{+} = \emp$ 
or $g(\partial_i\Ss_{+}) = \partial_i \Ss_{+} $ holds with $g$ in the cyclic group generated by the deck transformation 
corresponding to $\partial_i\Ss_{+}$. 

We obtain a convex domain 
$\Omega_+$ closed in $\Ss_+$ bounded by a union of straight segments ${\mathbf{l}}_i$ for $i \in {\mathcal{J}}$.
Each ${\mathbf{l}}_i$ is a geodesic in $\Ss_+$ connecting  
ideal endpoints of $\partial_i\Ss_{+}$.
We choose a primitive element $\bg_i \in \Gamma$ acting on ${\mathbf{l}}_i$ for each $i \in {\mathcal{J}}$. 
Since $\Gamma$ acts as a geometrically finite Fuchsian group on $\Ss_+$ without parabolics, 
$\Omega_+/\Gamma$ is a compact hyperbolic surface with geodesic boundary 
that is the convex hull of $\Ss_+/\Gamma$ and homeomorphic to $\Sigma'_+$. 
Thus, \[\Ss_+ - \Omega_+ = \coprod_{i \in  {\mathcal{J}}} D_i \] 
where $D_i$ is a convex open domain $\partial D_i = \clo(\partial_i\Ss_{+}) \cup {\mathbf{l}}_i$ for $i \in {\mathcal{J}}$
that covers an open annulus in $\Sigma'_+$. 

Since ${\mathbf{l}}_i $ covers a simple closed geodesic in $\Sigma_+$,  
\begin{align} \label{eqn:disj}
g(\mathbf{l}_i) \cap \mathbf{l}_i = \emptyset & \text{~if~}  i \neq j,   \  g \in\Gamma,  \notag \\
g(\mathbf{l})_i \cap \mathbf{l}_i = \emptyset & \text{~if~}  g\notin \mathsf{Stab}({l}_i). \end{align}

\subsection{The dynamics of geodesic flows on hyperbolic surfaces} \label{subsec:dyn} 

Now we assume that $\Gamma$ also acts freely and properly on $\Lspace$.
Assume that ${\L}(\Gamma) \subset \SO(2, \R)^o$. 
We recall that $\Gamma$ is isomorphic to a free group of finite rank $\geq 2$, and $\Ss_+/\Gamma$ is  
a complete genus $\tilde{\bg}$ hyperbolic surface with $\mathsf b$ ideal boundary components
and without parabolics.   
Recall that $\Sigma_+$ is the interior of the surface $\Sigma'_+$.

The Fuchsian $\Gamma$-action on the boundary $\partial\Ss_+$ of the standard disk $\Ss_+$ in $\SI^2_{\infty}$ 
forms a {\em discrete convergence group}\/:
For every
sequence $g_j$ of mutually distinct elements of $\Gamma$, 
there is a subsequence $g_{j_k}$ and (not necessarily distinct)
points $ a, b$ in the circle $\partial\Ss_+$ such that 
\begin{itemize} 
\item the sequences $g_{j_k} (x) \ra a$ locally uniformly in $\partial\Ss_+- \{b\}$, that is, 
uniformly in any compact subset of $\partial\Ss_+ - \{b\}$, 
and 
\item $g^{-1}_{j_k} (y) \ra b$ locally uniformly on $\partial\Ss_+ - \{a\}$ respectively as $k \ra \infty$. 
\end{itemize}
(See \cite{ABT} for details.)
We remark that 
\begin{equation} \label{eqn:ab} 
a, b \in \Lambda.
\end{equation}
For later purpose, we say that $a$ is an {\em attractor point} and $b$ is a {\em repeller point} of 
the sequence $\{g_{j_k}\}$.
Let $a_i$ and $r_i$ denote the attracting fixed point and the repelling fixed point of $g_i$ respectively.

\begin{lem}\label{lem:fix}
Let $g_{j_k}$ be a sequence satisfying the above convergence group properties. 
Suppose that the attractor point $a$ is distinct from the repeller point $b$. 
Then it follows that 
\begin{equation} \label{eqn:convg}
\{a_{j_k}\} \ra a \hbox{ and } \{r_{j_k}\} \ra b. 
\end{equation}
\end{lem}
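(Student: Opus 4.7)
The plan is to pass to a convergent subsequence of fixed points and rule out all but the desired pair of limits using the elementary dynamics of hyperbolic M\"obius transformations on the boundary circle $\BdS\Ss_+$. By compactness of $\BdS\Ss_+$, after extracting a subsequence (which I continue to denote $\{g_{j_k}\}$) I may assume $a_{j_k}\to c$ and $r_{j_k}\to d$ for some $c,d\in\BdS\Ss_+$; it then suffices to show $(c,d)=(a,b)$.

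I first establish the dichotomy $c,d\in\{a,b\}$. Because $g_{j_k}$ fixes $a_{j_k}$, if $c\neq b$ then $\{a_{j_k}\}$ eventually lies in a compact subset of $\BdS\Ss_+-\{b\}$, and locally uniform convergence of $g_{j_k}$ to the constant $a$ there forces $a_{j_k}=g_{j_k}(a_{j_k})\to a$, so $c=a$. The same argument applied to the inverse sequence $\{g_{j_k}^{-1}\}$ (whose sequence-attractor is $b$ and whose attracting fixed point is $r_{j_k}$) shows $d\in\{a,b\}$.

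The main step is to eliminate the three bad configurations $(c,d)\in\{(a,a),(b,a),(b,b)\}$ by means of the multiplier identity. In an affine chart on $\BdS\Ss_+\cong\rpo$, conjugating $g_{j_k}$ on the boundary to $z\mapsto\lambda_kz$ with $\lambda_k=e^{-\ell_k}\in(0,1)$ ($\ell_k>0$ being the translation length) yields the standard formula
\[
\lambda_k\;=\;\frac{(g_{j_k}(x)-a_{j_k})(x-r_{j_k})}{(g_{j_k}(x)-r_{j_k})(x-a_{j_k})}
\]
valid for any $x\in\BdS\Ss_+-\{a,b,a_{j_k},r_{j_k}\}$. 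Fixing $x\in\BdS\Ss_+-\{a,b\}$ and passing to the limit using $g_{j_k}(x)\to a$: the case $(b,a)$ makes the denominator vanish while the numerator tends to $(a-b)(x-a)\neq 0$, so $|\lambda_k|\to\infty$, contradicting $\lambda_k\in(0,1)$; the case $(b,b)$ makes the ratio tend to $1$, forcing $\ell_k\to 0$. The analogous identity applied to $\{g_{j_k}^{-1}\}$ (same multiplier $\lambda_k$, with the roles of $a_{j_k}$ and $r_{j_k}$ swapped and $g_{j_k}^{-1}(x)\to b$) yields $\ell_k\to 0$ likewise in the remaining case $(a,a)$.

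Finally, $\ell_k\to 0$ for a sequence of distinct $\{g_{j_k}\}\subset\Gamma$ is impossible: ${\mathcal L}(\Gamma)$ is a discrete subgroup of $\SOto$ without parabolic elements, and a standard consequence of the Margulis lemma provides a positive lower bound for the translation lengths of its nonidentity elements. This eliminates the last two bad cases, so the only subsequential limit is $(c,d)=(a,b)$, proving $a_{j_k}\to a$ and $r_{j_k}\to b$. I expect the main obstacle to be the bookkeeping that matches each bad case with the multiplier identity that is non-indeterminate there: the $(b,a)$ case is handled cleanly by the identity for $g_{j_k}$, but the degenerate cases $(b,b)$ and $(a,a)$ each produce a $0/0$ limit for one of the two identities, so one must use the opposite identity to extract $\ell_k\to 0$, which is then killed by the discreteness and no-parabolic hypotheses.
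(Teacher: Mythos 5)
Your proof is correct, but it takes a genuinely different route from the paper's argument. The paper applies the convergence-group property directly to shrinking closed arcs: for any $\eps>0$ small enough that the closed $\eps$-arc $I_\eps\subset\BdS\Ss_+$ about $a$ avoids $b$, the locally uniform convergence $g_{j_k}\to a$ on $\BdS\Ss_+-\{b\}$ gives $g_{j_k}(I_\eps)\subset I_\eps^o$ for all large $k$; and a closed arc carried into its own interior by a hyperbolic circle homeomorphism must contain the attracting fixed point (the repelling one lies in the complementary arc, since that arc is carried into its interior by $g_{j_k}^{-1}$). Thus $a_{j_k}\in I_\eps$ for large $k$, so $a_{j_k}\to a$, and the mirror argument applied to $g_{j_k}^{-1}$ gives $r_{j_k}\to b$. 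This is short, purely topological, and uses nothing about $\Gamma$ beyond $a\neq b$. You instead pass to subsequential limits $(c,d)$, use the fixed-point relations $g_{j_k}(a_{j_k})=a_{j_k}$ and $g_{j_k}^{-1}(r_{j_k})=r_{j_k}$ to force $c,d\in\{a,b\}$, and then dispose of the three degenerate pairs by the cross-ratio multiplier identity together with a lower bound on translation lengths. That works and is an instructive alternative, but it is longer and, for the cases $(a,a)$ and $(b,b)$, leans on a quantitative input whose justification you should state more carefully: a uniform positive lower bound on translation lengths is \emph{not} a consequence of discreteness plus absence of parabolics alone, nor really of the Margulis lemma as usually stated --- an infinitely generated Fuchsian group with no parabolics can have arbitrarily short closed geodesics. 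The bound holds here because ${\mathcal L}'(\Gamma)$ is \emph{finitely generated} and parabolic-free, hence convex cocompact, so its convex core is compact and has a positive systole (collar lemma). Citing that, rather than the Margulis lemma, makes your argument complete.
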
 
\begin{proof} 
One can use the fact that if a continuous map $f$ sends a closed arc $I$, $I \subset \partial \Ss_{+}$, 
to its interior $I^o$, then the fixed points
are in the image $f(I)$.
(See also the $\eps$-hyperbolicity in p.256 of \cite{AMS}. Since $a \ne b$, these transformations are uniformly 
$\eps$-hyperbolic.)
\end{proof}

Let $\Uu\Ss_+$ be the unit tangent vector bundle over $\Ss_+$,
and let $\Uu\Ss_+/\Gamma = \Uu \Sigma_+$ be the unit tangent vector bundle over $\Ss_+/\Gamma = \Sigma_+$.
By following the geodesics in $\Sigma_+$, we obtain a 
geodesic flow 
\[\Phi: \Uu \Sigma_+ \times \R \ra \Uu \Sigma_+.\]
A {\em geodesic current} is a Borel probability measure on $\Uu \Ss_+/\Gamma$ invariant under the geodesic flow
which is supported on a union of nonwandering geodesics. 
Let ${\mathcal C}(\Ss_{+}/\Gamma)$ denote the space of  the space of all geodesic
currents with the weak-$\ast$ topology.


 Let $\Uu_{\rm rec} \Sigma_+$ denote the set of unit tangent vectors of 
nonwandering  geodesics on $\Sigma_+$. 
We recall 
\begin{lem} \label{lem:glm}
Let $\Sigma_+$ be as above. 
Then 
\begin{itemize}
\item $\Uu_{\rm rec} \Sigma_+ \subset \Uu\Sigma_+$ is a connected, compact, geodesic flow 
invariant set. 
\item The inverse image $\Uu_{\rm rec} \Ss_+$ of $\Uu_{\rm rec} \Sigma_+$ in $\Uu_{\rm rec} \Ss_+$
is precisely the set of geodesics with both endpoints in $\Lambda$.
\item The set 
\begin{align*} 
\left\{(a, r) \in \partial \Ss_+ \times \partial \Ss_- \right| \, & a \hbox{ is an attracting fixed point and } \nonumber \\
& r \hbox{ is a repelling fixed point of } g \in \Gamma \big\}
\end{align*} 
 is dense in $\Lambda \times \Lambda$.
\end{itemize} 
\end{lem} 
\begin{proof}
The first item is in Lemma 1.2 in \cite{GLM} and 
the second item is in the proof of the same lemma. 
The connectedness is also proved in Lemma 1.3 in \cite{GLM}. 
The third item follows from the fact that the set of closed geodesic orbits are dense in $\Uu_{\rm rec} \Sigma_+$
as the flow is Anosov (see \cite{Katok2}).
\end{proof} 

Let $\Delta(\Lambda)$ denote the diagonal of $\Lambda \times \Lambda$. 
Such pairs $(\lambda_u,\lambda_s)\in\Lambda\times\Lambda - \Delta(\Lambda)$ correspond
to nonwandering geodesics in the quotient hyperbolic surface.
Thus, the set of closed geodesics is dense in the set of nonwandering geodesics 
by Lemma \ref{lem:glm}. 

We have a compactification picture of the above phenomena: 
For a nonelementary Fuchsian group $\Gamma_0\subset\PGL(2, \R)$, 
the closure in the projective space $\P\big(\Mat_2(\R)\big)$ equals
$\Gamma_0 \cup (\Lambda\times\Lambda)$, where $\Lambda\subset\partial\Ss_+$
is the limit set of $\Gamma_0$. Here we could write
\[
\P\big(\Mat_2(\R)\big) \ =\  \PGL(2,\R) \cup (\rpone\times\rpone)
\]
where $\rpone\times\rpone$ corresponds to the set of projective equivalence classes
of rank-one $2\times 2$ real matrices. Projective equivalence classes
of such matrices are completely determined by their kernels and images,
which are {\em arbitrary\/} lines through the origin in $\R^2$.  

\section{Real projective Schottky uniformizations} 
\label{sec:bord}


Goldman constructed an $\rptwo$-surface $\Sigma$ with a free holonomy group
in \cite{Gf}. The holonomy group is in the image $\mathsf{PSO}(2, 1)$ of $\SOto$ in $\PGL(3, \R)$. 
We will show that a domain in $\SI^2_{\infty}$ regularly covers $\Sigma$. 

Let $\Gamma$ be a free group of rank $\geq 2$ in $\Aut(\SI^3)$ acting  
on $\Ss_+ \cup \Ss_-$ properly and freely
but without parabolics.
We do not require $\Gamma$ to act properly and freely on $\Lspace$ in this section. 
(We will use the notation of \S \ref{subsec:Fuch} letting $G = \Gamma$. )

\subsection{The construction of the $\Gamma$-invariant domain}
Recalling \S \ref{subsec:Fuch}, 
we assume $\L(\Gamma) \subset \SOto^o$ initially
and so $\Gamma$ acts on $\Ss_+$. 
Then $\Ss_+/\Gamma$ is 
a complete genus $\tilde{\bg}$ hyperbolic surface with $\mathsf b$ ideal boundary components
where ${\mathsf b} \geq 1$ and $\tilde{\bg}$ is just some integer $\geq 0$.
In other words, $\Ss_+/\Gamma$ can be compactified by adding 
boundary components to a compact surface $\Sigma'_+$ with $\mathsf b$ boundary components. 
The universal cover $\tSigma'_+$ of $\Sigma'_+$ is identified with the union of a domain $\Ss_+$ and
a collection of open arcs in $\partial\Ss_+$. 
(At the moment, we do not require that $\Gamma$ acts properly or 
freely on $\Lspace$. )

Let $\Lambda$ denote the limit set of an orbit $\Gamma(x)$ for $x \in \Ss_+$. 
We identify $\tSigma'_+$ as $\clo(\Ss_+) - \Lambda$. 
Denote the components of $\partial\Sigma'$ by 
$\partial_i\Sigma'$ where $i=1,\dots,\bbb$.
\[ \tilde \Sigma'_+ = \Ss_+ \cup \bigcup_{i\in \mathcal{J}}\partial_i\tilde \Sigma'_+.\]

We will be using the same symbols for the group $\Gamma$ in $\Isom^+(\Lspace)$ its extension to $\SI^3$ and 
its element $g$ and its extension to $\SI^3$.
We identify $\SI^2_{\infty}$ with the boundary of $\He$, called the {\em boundary sphere}. 
$\Gamma$ acts on the standard circles $\partial \Ss_\pm$ and the interiors $\Ss_\pm$ in $\SI^2_{\infty}$.
Then each nonidentity element $g$ of $\Gamma$ has an attracting fixed point $a$ in $\partial\Ss_+$, 
a repelling fixed point $r$ in $\partial\Ss_+$, and a saddle-type fixed point $s$ in $\Ss_0$. Of course, their antipodes in $\SI^2_{\infty}$ are also 
an attracting fixed point $a_-$ in $\partial \Ss_-$ and a repelling fixed point $r_-$ in $\partial \Ss_-$ 
and a saddle-type fixed point $s_-$ in $\Ss_0$ respectively. 
 
Since $g$ acts on $\partial\Ss_+$ and fixes $a$ and $r$, we obtain that 
$g$ acts on the two $1$-dimensional subspaces tangent to $\partial\Ss_+$ at $a$ and $r$. 
Hence, $\{s, s_-\}$ is their intersection, and also
\begin{equation}\label{eqn:saddle} 
\{s, s_-\} \subset \varepsilon(a) \cup \varepsilon(r) 
\end{equation} 
as $\varepsilon(a)$ and $\varepsilon(r)$ 
are disjoint halves of these $1$-dimensional subspaces.

We now define
$\tSigma'_- := {\A}(\tSigma'_{+})$.
Then $\Gamma$ acts properly and freely on $\tSigma'_-$ by antipodality,
and $\tSigma'_-/\Gamma$ is a compact $\rptwo$-surface with geodesic boundary diffeomorphic to $\Sigma'_+$. Denote it by $\Sigma'_-$. 
Also, let $\Omega_-$ be the set of antipodal points to $\Omega_+$. 
Then $\Gamma$ acts properly and freely on $\Omega_+$ and $\Omega_-$. 
Denote the respective compact quotient surfaces with geodesic boundary 
by $\Sigma''_+$ and $\Sigma''_-$. 

For each point $x$ of $\partial\Ss_+$, we recall that $\varepsilon(x)$ is an open segment of length $\pi$ with end points 
$x$ and $-x$ tangent to $\partial\Ss_+$. 
The open arcs $\varepsilon(x)$ for $x\in \partial\Ss_+$ give leaves of the foliation 
$\mathcal F$ in  $\Ss_0$. 
Let us give an induced orientation on $\Sigma'_+$ and hence an induced orientation on each ${\mathbf{l}}_i$ for $i \in {\mathcal{J}}$.
Let $p_i$ and $q_i$ denote the forward and backward end points of ${\mathbf{l}}_i$. 
We draw a segment $s_i = \varepsilon(p_i)$.
We also draw a segment $t_i = \varepsilon(q_i)$. 
Then ${\mathbf{l}}_i, s_i, t_i,$ and ${\mathbf{l}}_{i-}$ bound an open disk $E_i$ invariant under $\langle \bg_i \rangle$, which 
we call a {\em strip}.  
We denote by ${\mathcal{R}}_i$ the open strip union with ${\mathbf{l}}_i$ and ${\mathbf{l}}_{i-}$; that is, 
\[ {\mathcal{R}}_i = E_i \cup  {\mathbf{l}}_i \cup {\mathbf{l}}_{i-}, \partial E_i = {\mathbf{l}}_i \cup s_i \cup t_i \cup {\mathbf{l}}_{i-}.\]
(See Figures \ref{fig:3dfig}, \ref{fig:ann}, and \ref{fig:figure2} for the pictures of these arcs tangent to $\Ss_+$ and $\Ss_-$.)

Using $\mathcal F$, we obtain:
\begin{prop} \label{prop:disj} 
The strips ${\mathcal{R}}_i$ and ${\mathcal{R}}_j$ are disjoint for $i \ne j$ where 
$i,j\in \mathcal{J}$. 
\end{prop}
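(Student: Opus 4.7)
The plan is to describe each strip $\mathcal{R}_i$ explicitly as a union of three pieces --- one in each of $\Ss_+$, $\Ss_0$, and $\Ss_-$ --- all governed by the boundary arc $\mathbf{b}_i$, and then deduce pairwise disjointness from the unlinkedness of the chords ${\mathbf{l}}_i$ in $\Ss_+$ supplied by \eqref{eqn:disj} together with the fact that $\mathcal{F}$ is a foliation of $\Ss_0$ indexed by $\BdS\Ss_+$ via $\Pi$. Concretely: the chord ${\mathbf{l}}_i$ cuts $\Ss_+$ into two open half-disks; I would let $H_i^+$ be the one whose ideal boundary arc is $\mathbf{b}_i$ and set $H_i^-:=\A(H_i^+)$, the antipodal half-disk of $\Ss_-$ bounded by ${\mathbf{l}}_{i,-}$ and $\mathbf{b}_{i,-}$; the leaves $\varepsilon(x)$ for $x\in\mathbf{b}_i$ sweep out an open sub-annulus of $\Ss_0$ bounded by $s_i=\varepsilon(p_i)$ and $t_i=\varepsilon(q_i)$. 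Together with the open arcs $\mathbf{b}_i,\mathbf{b}_{i,-}$ these assemble into a $\bg_i$-invariant open subset of $\Ss$ whose topological boundary is exactly the Jordan curve ${\mathbf{l}}_i\cup s_i\cup t_i\cup{\mathbf{l}}_{i,-}$, identifying it with the strip $D_i$.

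Now for $i\ne j$, the arcs $\mathbf{b}_i,\mathbf{b}_j\subset\BdS\Ss_+$ are disjoint open arcs (distinct boundary components of $\tSigma'_+$), so the pairs $\{p_i,q_i\}$ and $\{p_j,q_j\}$ are unlinked on the circle $\BdS\Ss_+$. Combined with ${\mathbf{l}}_i\cap{\mathbf{l}}_j=\emp$ from \eqref{eqn:disj}, this forces $\mathbf{b}_j$ to lie on the side of ${\mathbf{l}}_i$ opposite to $\mathbf{b}_i$, whence $H_i^+\cap H_j^+=\emp$; applying $\A$ gives $H_i^-\cap H_j^-=\emp$. Since distinct leaves of $\mathcal{F}$ are disjoint and $\mathbf{b}_i\cap\mathbf{b}_j=\emp$, the two open sub-annuli of $\Ss_0$ through $\mathbf{b}_i$ and $\mathbf{b}_j$ are disjoint as well. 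The bounding geodesics ${\mathbf{l}}_i,{\mathbf{l}}_{i,-},{\mathbf{l}}_j,{\mathbf{l}}_{j,-}$ are pairwise disjoint (opposite-sign pairs are separated by $\Ss_+$ vs.\ $\Ss_-$; same-sign pairs by \eqref{eqn:disj}), so assembling the three disjointnesses yields $\mathcal{R}_i\cap\mathcal{R}_j=\emp$.

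The main obstacle is the first step: both complementary disks of the Jordan curve $\partial D_i$ in $\Ss$ are $\bg_i$-invariant, so one must commit to the correct choice --- the ``small'' disk meeting $\mathbf{b}_i$ rather than the large one containing $\Omega_+$ and $\Omega_-$. Once $D_i$ is identified with $H_i^+\cup H_i^-\cup\bigcup_{x\in\mathbf{b}_i}\varepsilon(x)$ (up to arcs in $\BdS\Ss_\pm$), the remainder of the argument is elementary plane topology on $\Ss_+$ combined with the product-like structure of the foliation $\mathcal{F}$ on $\Ss_0$.
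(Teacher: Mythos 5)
Your proof is correct, and the basic ingredients are the same as the paper's (disjointness of the leaves of $\mathcal{F}$ and equation \eqref{eqn:disj} for the chords ${\mathbf{l}}_i$), but the organization is genuinely different. The paper's proof argues that the boundary Jordan curves $\BdS D_i = {\mathbf{l}}_i \cup s_i \cup t_i \cup {\mathbf{l}}_{i,-}$ are pairwise disjoint and then simply states ``Hence, the conclusion follows.'' Taken literally, disjoint boundary Jordan curves in $\Ss$ only imply that the corresponding open disks are disjoint, nested, or essentially complementary; ruling out nesting requires an extra observation (e.g.\ that $\clo(D_i)\cap\BdS\Ss_+ = \clo(\mathbf{b}_i)$, and $\mathbf{b}_j\not\subset\clo(\mathbf{b}_i)$), which the paper leaves implicit. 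Your stratum-by-stratum approach --- decomposing $\mathcal{R}_i$ into its pieces in $\Ss_+$, $\BdS\Ss_+$, $\Ss_0$, $\BdS\Ss_-$, $\Ss_-$ and checking disjointness within each stratum --- sidesteps the nesting issue entirely and makes the argument self-contained; in effect, the $\BdS\Ss_+$-stratum check ($\mathbf{b}_i\cap\mathbf{b}_j=\emp$) is exactly what rules out nesting. Two small remarks: the region $\bigcup_{x\in\mathbf{b}_i}\varepsilon(x)$ in $\Ss_0$ is an open rectangle (a disk), not a ``sub-annulus,'' since $\mathbf{b}_i$ is an arc; and when you assert that $\mathbf{b}_j$ lies on the side of ${\mathbf{l}}_i$ opposite to $\mathbf{b}_i$, you are implicitly using that $\mathbf{b}_j$ misses the endpoints $p_i,q_i$ as well, which holds because $p_i,q_i\in\Lambda$ while $\mathbf{b}_j\subset\BdS\Ss_+-\Lambda$. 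Neither affects the validity of the argument.
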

\begin{proof}
Since $s_i, t_i, s_j, t_j$ are all 
leaves of $\mathcal F$ corresponding to distinct points of $\partial\Ss_+$,  the mutual disjointness
of these arcs follows. (See Figure \ref{fig:ann}.)
$s_i$ does not intersect $t_i, s_j, t_j$ for $i \ne j$, where $i, j \in \mathcal{J}$,
and $t_i$ does not intersect $s_i, s_j, t_j$ for $i \ne j$. 
Furthermore ${\mathbf{l}}_i$ and ${{\mathbf{l}}}_{i-}$ for 
$i \in \mathcal{J}$ are mutually disjoint
by \eqref{eqn:disj}.
Thus the boundary $\partial {\mathcal{R}}_i$ and $\partial {\mathcal{R}}_j$ for $i \ne j$ are disjoint. Hence, the conclusion follows.
\end{proof}




Goldman 
\cite{Gthesis} classified compact annuli with geodesic boundary and 
holonomy matrices that are diagonalizable with distinct positive eigenvalues
 following Nagano-Yagi \cite{NY}.
(See \cite{cdcr2} also.)

\begin{figure}[h]
\centerline{\includegraphics[height=6.5cm]{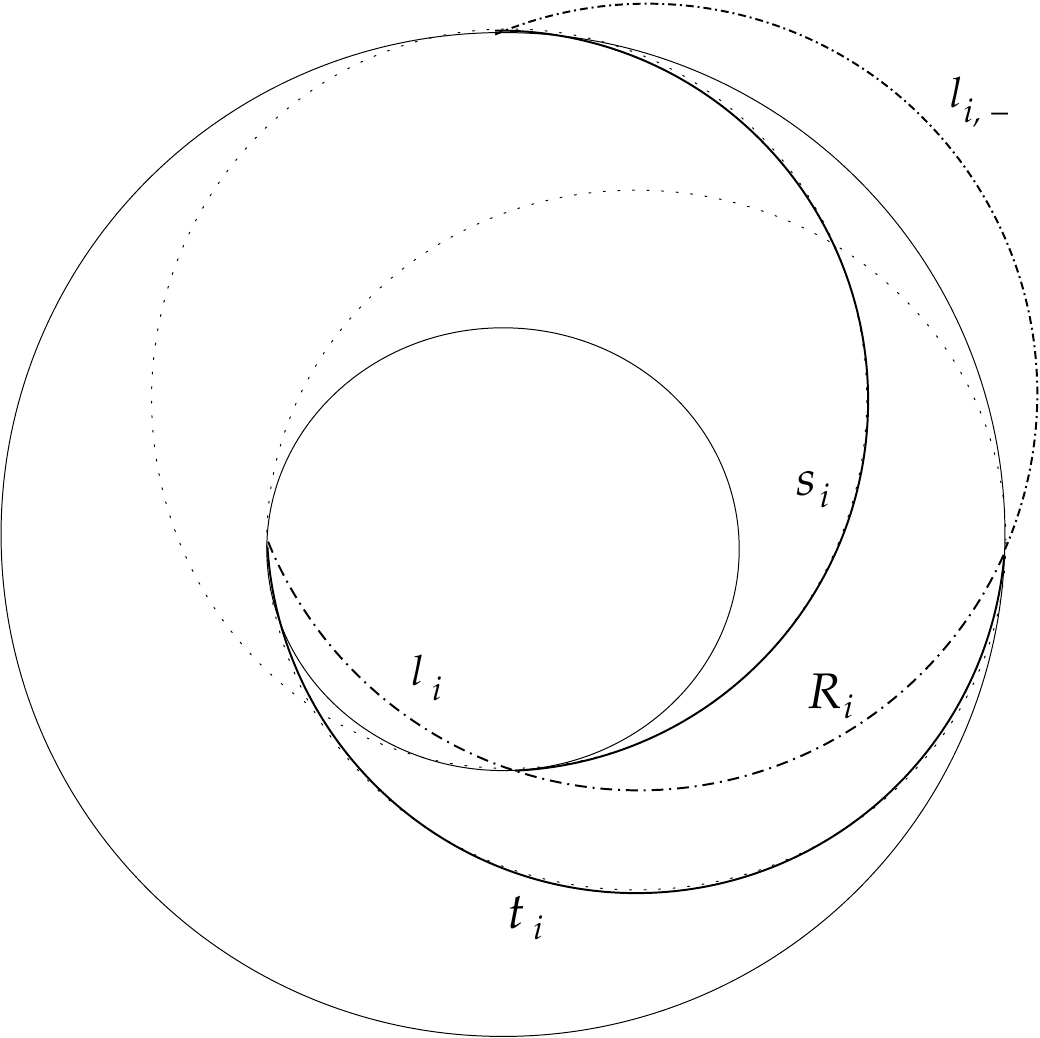}} 
\caption{The two stereographically projected  ``lune'' regions which cover $\pi$-annuli. \label{fig:ann}}
\end{figure}

\begin{prop} \label{prop:Ri}
Given ${\mathcal{R}}_i$ for 
$i\in {\mathcal J}$   
and $\gamma \in \Gamma$, we either have 
\[\gamma({\mathcal{R}}_i) = {\mathcal{R}}_i \hbox{ or } \gamma({\mathcal{R}}_i) \cap {\mathcal{R}}_i = \emp.\]
In the former case, $\gamma = \bg_i^n$, $n \in \bZ$ for 
the deck transformation $\bg_i$ corresponding to ${\mathbf{l}}_i$.
Furthermore,  
${\mathcal{R}}_i/\langle \bg_i \rangle$ is a compact annulus with geodesic boundary that 
decomposes into two $\pi$-annuli or four elementary annuli along some collection of simple closed geodesics. 
\end{prop}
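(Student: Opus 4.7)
My plan is to split the argument into three parts matching the three assertions: (i) the dichotomy together with the identification $\gamma = \bg_i^n$ in the non-disjoint case; (ii) the quotient ${\mathcal R}_i/\langle \bg_i \rangle$ is a compact annulus with principal geodesic boundary; (iii) the decomposition into two $\pi$-annuli or four elementary annuli.

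For (i), the entire construction of ${\mathcal R}_i$ from ${\mathbf{l}}_i$ (via the endpoints $p_i, q_i$ in $\BdS \Ss_+$, the tangent arcs $s_i = \varepsilon(p_i)$, $t_i = \varepsilon(q_i)$, and the antipode ${\mathbf{l}}_{i,-}$) is $\SOto$-equivariant. Since $\Gamma$ permutes the family $\{{\mathbf{l}}_j\}_{j \in {\mathcal{J}}}$ (the boundary geodesics of the convex core $\Omega_+$), it permutes the strips $\{{\mathcal R}_j\}_{j \in {\mathcal{J}}}$ accordingly; if $\gamma({\mathbf{l}}_i) = {\mathbf{l}}_j$ then $\gamma({\mathcal R}_i) = {\mathcal R}_j$. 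By Proposition \ref{prop:disj}, distinct strips are disjoint, so $\gamma({\mathcal R}_i) \cap {\mathcal R}_i \ne \emp$ forces $\gamma({\mathcal R}_i) = {\mathcal R}_i$ and hence $\gamma({\mathbf{l}}_i) = {\mathbf{l}}_i$ (using that $\gamma$ preserves $\Ss_+$ and ${\mathbf{l}}_i$ is the unique component of ${\mathcal R}_i \cap \Ss_+$); then \eqref{eqn:disj} yields $\gamma = \bg_i^n$ for some $n \in \bZ$.

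For (ii), since ${\mathcal L}(\Gamma) \subset \SOto^o$ by the reduction at the start of this section, $\bg_i$ is positive hyperbolic with six fixed points in $\Ss$: the attracting/repelling pairs $\pm p_i, \pm q_i$ on $\BdS \Ss_+ \cup \BdS \Ss_-$, and the antipodal saddles $\pm s$ on $\Ss_0$ which by \eqref{eqn:saddle} lie one each on $s_i$ and $t_i$. None of these lies in ${\mathcal R}_i = D_i \cup {\mathbf{l}}_i \cup {\mathbf{l}}_{i,-}$, because the four corners are endpoints of the open arcs ${\mathbf{l}}_i, {\mathbf{l}}_{i,-}$ and lie outside the open disk $D_i$, while the two saddles lie on the excluded side arcs. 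Thus $\bg_i$ acts freely and properly discontinuously on ${\mathcal R}_i$; topologically ${\mathcal R}_i \cong \R \times [0,1]$ with $\bg_i$ acting as a translation along the $\R$-factor, so the quotient is a compact annulus whose two boundary circles ${\mathbf{l}}_i/\langle \bg_i \rangle$ and ${\mathbf{l}}_{i,-}/\langle \bg_i \rangle$ are principal closed geodesics, each lifting to a straight segment joining the attracting and repelling fixed points of $\bg_i$.

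For (iii), I will use the three $\bg_i$-invariant great circles $l_1, l_2, l_3$ through the pairs $\{\pm p_i, \pm q_i\}$, $\{\pm p_i, \pm s\}$, $\{\pm q_i, \pm s\}$ respectively, which decompose $\Ss$ into eight invariant open triangles as in Section \ref{subsec:rpt}. Within the open strip $D_i$, each $l_k$ contributes a single $\bg_i$-invariant chord: $l_1 \cap D_i$ is a diagonal of the 4-gon $\clo(D_i)$ joining two opposite corners through $\Ss_0$, while $l_2 \cap D_i$ (resp. $l_3 \cap D_i$) joins a corner to the saddle point lying on $t_i$ (resp. $s_i$). These three arcs are pairwise disjoint inside $D_i$ (since $l_k \cap l_m$ lies on $\BdS D_i$ for distinct $k, m$) and subdivide $\clo({\mathcal R}_i)$ into four invariant triangles; under the quotient by $\langle \bg_i \rangle$ each becomes an elementary annulus in the sense of Section \ref{subsec:rpt}, the three chords descend to three simple closed principal geodesics, and ${\mathcal R}_i/\langle \bg_i \rangle$ splits as four elementary annuli, or equivalently (by grouping each adjacent pair sharing a cut) as two $\pi$-annuli. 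The main obstacle will be the combinatorial verification for (iii): showing that each $l_k \cap D_i$ is precisely a single chord with the specified endpoints (using the antipodal/rotational symmetries of $\Ss$ together with the transverse intersection of $l_k$'s with $\BdS D_i$ to distinguish which half of each great circle sits in $D_i$), and that the four resulting regions coincide with four of the eight invariant triangles, from which the elementary-annulus structure in the quotient follows directly.
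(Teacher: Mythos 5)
Your proposal is essentially correct, and parts (i) and (ii) match the paper closely (the paper's own argument for (i) is terse: it simply observes that ${\mathcal{R}}_i \cap \gamma({\mathcal{R}}_i) \ne \emp$ forces ${\mathbf{l}}_i \cap \gamma({\mathbf{l}}_i) \ne \emp$, then invokes \eqref{eqn:disj}). One small slip in (i): ${\mathbf{l}}_i$ is not ``the unique component of ${\mathcal R}_i \cap \Ss_+$'' --- that intersection is the connected set $D_i \cap \Ss_+$ together with ${\mathbf{l}}_i$, i.e.\ the lune between ${\mathbf{l}}_i$ and $\mathbf{b}_i$ --- but the conclusion $\gamma({\mathbf{l}}_i) = {\mathbf{l}}_i$ still follows since $\gamma$ acts as a hyperbolic isometry of $\Ss_+$ and must preserve the unique geodesic boundary component of ${\mathcal R}_i \cap \clo(\Ss_+)$.

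For (iii) you take a genuinely finer route than the paper. The paper cuts $\clo({\mathcal R}_i)$ once, along the great circle $l_1$ through ${\mathbf{l}}_i \cup {\mathbf{l}}_{i,-}$; the two resulting pieces (bounded by $l_1$ and $t_i$, resp.\ $l_1$ and $s_i$) each cover a $\pi$-annulus, and the further splitting into elementary annuli is then invoked as a general fact about $\pi$-annuli from Section~\ref{subsec:rpt}. You instead cut along all three $\bg_i$-invariant great circles $l_1, l_2, l_3$ at once, landing directly on four invariant triangles and hence four elementary annuli, then regroup into $\pi$-annuli. Both work, and the key observation underlying your version --- that $\partial D_i = {\mathbf{l}}_i \cup s_i \cup t_i \cup {\mathbf{l}}_{i,-}$ is already contained in $l_1 \cup l_2 \cup l_3$ (since $s_i \subset l_2$, $t_i \subset l_3$, ${\mathbf{l}}_i \cup {\mathbf{l}}_{i,-} \subset l_1$), so the four pieces really are complete invariant triangles --- is correct and worth making explicit, as it is what guarantees each closed piece minus its one edge on $s_i \cup t_i$ is an elementary annulus. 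What remains to close the verification you flagged is exactly one orientation bookkeeping step: by \eqref{eqn:saddle} the two saddles sit one each on $s_i$ and $t_i$, so the chord $l_2 \cap D_i$ is the half of $\A(s_i)$ running from $p_i$ or $-p_i$ to the saddle on $t_i$ (and symmetrically for $l_3$), while the $l_1$-chord is the single diagonal arc of $l_1 - \clo({\mathbf{l}}_i) - \clo({\mathbf{l}}_{i,-})$ lying in $D_i$; these three are then pairwise interior-disjoint by $l_j \cap l_k$ consisting of fixed points on $\partial D_i$. This is essentially the content of the paper's Figure~\ref{fig:ann}, so your approach buys explicitness at the cost of this extra combinatorics, while the paper's buys brevity by citing the $\pi$-annulus decomposition.
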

\begin{proof}
Suppose that ${\mathcal{R}}_i \cap \gamma({\mathcal{R}}_i)\ne \emp$ holds. 
Then
${\mathbf{l}}_i \cap \gamma({\mathbf{l}}_i) \ne \emp$ holds since the arcs $t_i$ are mutually disjoint. 
Hence, \eqref{eqn:disj} implies the result.


There is a subspace $l$ of dimension $1$, a geodesic circle, containing ${\mathbf{l}}_i$ and ${{\mathbf{l}}}_{i-}$ in
$\SI^2_{\infty}$. $l$ and $t_i$ bound a closed disk $D_1$ and if we remove $t_i$ and the fixed points 
at the ends of ${\mathbf{l}}_i$, this region covers a $\pi$-annulus with principal boundary components.
The similar statement is true for the disk bounded by $l$ and $s_i$. 
Therefore, $A_i= {\mathcal{R}}_i /\langle \bg_i \rangle$ is a union of two compact $\pi$-annuli. In other words, 
$l - \clo({\mathbf{l}}_i) - \clo({\mathbf{l}}_{i-})$ maps to a simple closed principal geodesic decomposing $A_i$ into 
two $\pi$-annuli with principal boundary. (See Figure \ref{fig:ann}.)
Also, each $\pi$-annulus decomposes into two elementary annuli. (See \S \ref{subsec:rpt}.) 
\end{proof}


We say that for 
$i, j \in {\mathcal{J}}$, 
the annulus ${\mathcal{R}}_i/\langle \bg_i \rangle$ is {\em equivalent} to ${\mathcal{R}}_j/\langle \bg_j \rangle$
if ${\mathcal{R}}_j = g({\mathcal{R}}_i)$ and $g \bg_i g^{-1} = \bg_j^{\pm}$ for $g \in \Gamma$.
Thus, in fact, there are only $\bb$ equivalence classes of annuli of above form.

We define \[{\mathcal{A}}_i := {\mathcal{R}}_i \cap \Ss_0 = \bigcup_{x \in {\partial_i \Ss_{+}}} \varepsilon(x) 
\hbox{ for } i \in {\mathcal J}.\]
We note that ${\mathcal{A}}_i \subset {\mathcal{R}}_i$ for each 
$i \in {\mathcal{J}}$.
We finally define an open domain in $\SI^2_{\infty}$:
\begin{eqnarray} 
\tSigma  &=& \tSigma'_+ \cup \coprod_{i \in {\mathcal{J}}} {\mathcal{R}}_i \cup \tSigma'_- \nonumber \\ 
               &=& \tSigma'_+ \cup \coprod_{i \in {\mathcal{J}}} {\mathcal{A}}_i \cup \tSigma'_- \nonumber \\
               &=& \Omega_+ \cup  \coprod_{i \in {\mathcal{J}}} {\mathcal{R}}_i \cup \Omega_- \label{eqn:tsigma1} \\
               &=& \SI^2_{\infty} -  \bigcup_{x\in \Lambda} \clo(\varepsilon(x)). \label{eqn:tsigma}
\end{eqnarray}               
Since the collection whose elements are of form ${\mathcal{R}}_i$ mapped to itself by $\Gamma$, we showed that 
$\Gamma$ acts on this open domain.

\subsection{The $\rptwo$-surface}
First, $\tSigma$ does not contain any fixed point: 
Suppose that $g \in \Gamma$ acts on ${\mathbf{l}}_i$ for some $i \in {\mathcal{J}}$. 
Then $g$ acts on ${\mathcal{R}}_i$ and 
the four attracting and repelling fixed points are the vertices of ${\mathcal{R}}_i$ 
and two saddle-type fixed points are in $s_i$ or $t_i$. Hence, they are outside $\tSigma$.

Suppose that $g \in \Gamma -\{\Idd\}$ does not act on any of the boundary components. 
Then $g$ has four attracting and repelling fixed points $a,  r \in \partial\Ss_+$ and  $a_-, r_- \in \partial\Ss_-$. 
By \eqref{eqn:saddle}, 
saddle-type fixed points $s$ and $s_-$ are either on 
a great segment $\varepsilon(a)$ or the other one $\varepsilon(r)$
tangent to $\partial\Ss_+$  at $a$ and $r$ respectively.
Since $a, r \in \Lambda$ holds, the fixed points are outside $\tSigma$
by \eqref{eqn:tsigma}.

%
\begin{figure}[hb]
\centerline{\includegraphics[height=10cm]{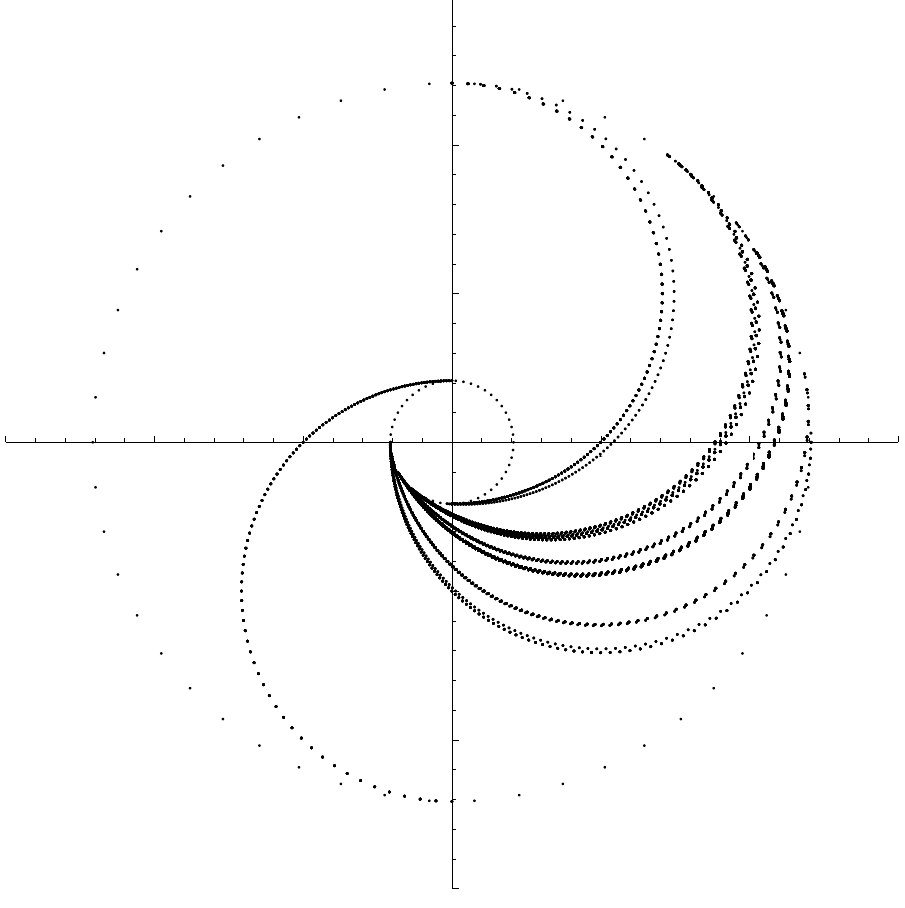}} 
\caption{ A figure representing an actual free group action with some of the arcs $s_i$ and $t_i$ drawn in point-plots on the stereographically projected sphere as in Figure~\ref{fig:figure2}.
}
\label{fig:figexamp}
\end{figure}

Let $G \subset \SOto^o$ be a subgroup whose elements preserve orientation 
on $\Ss_+$, and  any open domain $\mathcal{D}$ an open domain 
on $\SI^2_{\infty}$ upon which where $G$ acts freely and properly.
Then $\mathcal{D}/G$ has an induced orientation from $\Ss_+$.


\begin{thm} \label{thm:domainq} 
Let $\Gamma$ be an orientation-preserving finitely generated group in $\SOto^o$ without parabolics
acting freely and properly on $\Ss_+$ 
isomorphic to a free group of finite rank $\geq 2$. 
Denote the ideal boundary components of $\Ss_+/\Gamma$ by
$\partial_i(\Ss_+/\Gamma)$, for $i=1,\dots,\bbb$.
Then 
\begin{itemize} 
\item $\Gamma$ acts properly 
and freely on an open domain ${\mathcal{D}}$ in $\SI^2_{\infty}$ 
and ${\mathcal{D}}/\Gamma$ is homeomorphic to a closed surface of genus 
${\mathsf g}$, ${\mathsf g} \geq 2$. 
\item As an $\rptwo$-surface ${\mathcal{D}}/\Gamma$ decomposes along simple closed principal geodesics into a union of  $\bbb$ annuli $A_1, \ldots, A_{\mathsf b}$ and two convex $\rptwo$-surfaces $\Sigma''_+$ and $\Sigma''_-$.
Each annulus $A_i$, $i=1,\ldots, {\mathsf b}$ decomposes into two $\pi$-annuli.
\item $\Gamma$ also acts properly
and freely on ${\mathcal{D}}_- := {\mathscr A}({\mathcal D})$, 
and ${\mathcal{D}}_-/\Gamma$ is a closed $\rptwo$-surface diffeomorphic 
to ${\mathcal{D}}/\Gamma$.
\item The antipodal map ${\A}: \SI^2_{\infty} \ra \SI^2_{\infty}$ induces a projective diffeomorphism between  
${\mathcal{D}}/\Gamma$ and ${\mathcal{D}}_-/\Gamma$.
\end{itemize} 
\end{thm}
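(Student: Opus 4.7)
The plan is to take $\mathcal{D} := \tilde{\Sigma}$ exactly as already constructed in \eqref{eqn:tsigma1}--\eqref{eqn:tsigma}. Most of the structural work---namely that each strip $\mathcal{R}_i$ is either stabilized by a cyclic group $\langle\bg_i\rangle$ with compact annulus quotient or translated disjointly (Propositions~\ref{prop:disj} and~\ref{prop:Ri}), and that no nontrivial $\gamma\in\Gamma$ fixes a point of $\tilde{\Sigma}$ (the paragraph preceding Figure~\ref{fig:figexamp})---is already in hand. Openness of $\tilde{\Sigma}$ is immediate from \eqref{eqn:tsigma} since $x\mapsto\clo(\varepsilon(x))$ is continuous in the Hausdorff metric and $\Lambda$ is compact, and $\Gamma$-invariance follows from $\Gamma$-invariance of $\Lambda$ together with the equivariance of $x\mapsto\varepsilon(x)$.

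Next I would verify proper discontinuity of $\Gamma$ on $\tilde{\Sigma}$ by exploiting the decomposition $\tilde{\Sigma} = \Omega_+\cup\bigcup_i\mathcal{R}_i\cup\Omega_-$. For a compact $K\subset\tilde{\Sigma}$, the positive Hausdorff distance from $K$ to the removed set $\bigcup_{x\in\Lambda}\clo(\varepsilon(x))$ forces $K$ to meet only finitely many $\Gamma$-orbits of strips (since the strips accumulate precisely onto these removed arcs); on each such strip the stabilizer is cyclic and acts cocompactly by Proposition~\ref{prop:Ri}. The portions $K\cap\Omega_\pm$ are handled by classical Fuchsian proper discontinuity on the convex core (Section~\ref{subsec:Fuch}). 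Combining these yields only finitely many $\gamma\in\Gamma$ with $\gamma(K)\cap K\neq\emptyset$.

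Consequently $\tilde{\Sigma}/\Gamma$ is a Hausdorff surface equal to $\Sigma''_+\cup\bigcup_{i=1}^{\mathsf{b}} A_i\cup\Sigma''_-$, where $\Sigma''_\pm := \Omega_\pm/\Gamma$ are the compact Fuchsian convex cores and each $A_i := \mathcal{R}_i/\langle\bg_i\rangle$ is a compact annulus attached along the images of the principal geodesics $\mathbf{l}_i, \mathbf{l}_{i,-}$; compactness of the quotient follows. Using Euler-characteristic additivity across the gluing circles,
\[
\chi(\tilde{\Sigma}/\Gamma) \;=\; 2\chi(\Sigma''_+) \;=\; 2(2-2\tilde{\bg}-\mathsf{b}),
\]
so the genus is $\mathsf{g} = 2\tilde{\bg}+\mathsf{b}-1$, which equals the free rank $r$ of $\Gamma$ and is hence at least $2$. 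Proposition~\ref{prop:Ri} supplies the further splitting of each $A_i$ into two $\pi$-annuli. For the antipodal statements, observe that $-\Idd$ is central in $\GL(\V)$, so $\mathcal{A}$ commutes with every element of $\Gamma\subset\SL(3,\R)$; thus $\mathcal{D}_- := \mathcal{A}(\mathcal{D})$ is $\Gamma$-invariant with a free properly discontinuous action, $\mathcal{A}$ descends to a projective diffeomorphism $\mathcal{D}/\Gamma\to\mathcal{D}_-/\Gamma$, and the decomposition of $\mathcal{D}_-$ into $\Sigma''_\pm$ and annuli $A'_i := \mathcal{A}(\mathcal{R}_i)/\langle\bg_i\rangle$ is the $\mathcal{A}$-image of that of $\mathcal{D}$.

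The principal obstacle is the proper discontinuity step: one must rigorously extract from a compact $K\subset\tilde{\Sigma}$ a uniform Hausdorff-distance estimate separating $K$ from the removed arcs $\varepsilon(x)$, $x\in\Lambda$, and then translate that into a finiteness statement at the seams between $\Omega_\pm$ and the strips $\mathcal{R}_i$. Once this estimate is secure the rest is bookkeeping, but obtaining the bound requires a careful compactness argument using that $\{\varepsilon(x):x\in\BdS\Ss_+\}$ varies continuously in $x$ over the compact circle $\BdS\Ss_+$.
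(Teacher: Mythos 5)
Your proposal matches the paper's construction and overall argument: take $\mathcal{D}:=\tilde{\Sigma}$, use the already-established freeness and Propositions~\ref{prop:disj} and~\ref{prop:Ri}, prove proper discontinuity piecewise, and then deduce compactness of the quotient. The ``seam'' worry you flag at the proper-discontinuity step is handled more cleanly in the paper by splitting a compact $K\subset\tilde{\Sigma}$ into the three closed pieces $K\cap\tilde{\Sigma}'_+$, $K\cap\tilde{\Sigma}'_-$, and $K\cap\bigcup_i\clo_{\mathcal{D}}(\mathcal{A}_i)$: since $\Gamma\subset\SOto^o$ preserves $\Ss_+$, $\Ss_-$, and $\Ss_0$, each piece is $\Gamma$-invariant, so $g(K)\cap K\ne\emptyset$ already forces $g(K_j)\cap K_j\ne\emptyset$ for one of the three pieces, and finiteness on each piece follows from classical Fuchsian properness (on $\tilde{\Sigma}'_\pm$) and the disjointness/cyclic-stabilizer property of Proposition~\ref{prop:Ri} (on the $\mathcal{A}_i$'s) --- no uniform Hausdorff bound across seams is needed. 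One small slip: you want ``$K$ meets only finitely many strips,'' not ``finitely many $\Gamma$-orbits of strips'' (there are always finitely many orbits); the finiteness on strips is what your accumulation-onto-$\hat\Lambda$ argument correctly establishes. Your explicit Euler-characteristic computation $\mathsf{g}=2\tilde{\bg}+\mathsf{b}-1$ and the observation that $\mathcal{A}$ actually commutes with $\Gamma$ (via $-\Idd$ central in $\GL(\V)$) are valid and somewhat more explicit than the paper's statements.
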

\begin{proof} 
We begin by letting ${\mathcal{D}} = \tSigma$ obtained as above in \eqref{eqn:tsigma1}. We showed that $\Gamma$ has no fixed point on ${\mathcal{D}}$. 
Let $\clo_\cD({{\mathcal{A}}_i})$ denote the closure of ${\mathcal{A}}_i$ in ${\mathcal{D}}$
for each $i\in \mathcal{J}$.

Now $\Gamma$ acts properly on $\tSigma'_+$ and $\tSigma'_-$ 
and each $g \in \Gamma$ sends $\clo_\cD({\mathcal{A}}_i )$ to $\clo_\cD({\mathcal{A}}_j)$ for $j \ne i$ or
$g$ is in the infinite cyclic subgroup of $\Gamma$ acting properly 
on $\clo_\cD({\mathcal{A}}_i) $ if $g(\clo_\cD({\mathcal{A}}_i) ) = \clo_\cD({\mathcal{A}}_i) $.
Hence, $\Gamma$ acts properly on 
$ \bigcup_{i\in \mathcal{J}}  \clo_\cD({\mathcal{A}}_i)$.

For any compact set $K\subset\tSigma$, 
the compact sets 
\begin{align*}
K_1 &:= K \cap \tSigma'_+, \\
K_{-1} &:= K \cap \tSigma'_-, \\
K_0 &:= K \cap \bigcup_{i\in \mathcal{J}}  \clo_\cD({\mathcal{A}}_i)
\end{align*}
satisfy: 
\[
K = K_+ \cup K_- \cup K_0. \] 


Since each of $\clo_\cD({\mathcal{A}}_i)$ has an open neighborhood $O_j$ forming a mutually disjoint collection of 
open sets covering $K_0$, it follows that 
$K_0$ meets only finitely many $\clo_\cD({\mathcal{A}}_i)$.

If $g(K) \cap K \ne \emp$, we have $g(K_i) \cap K_i \ne \emp$ for some $i = -1, 0, 1$.
Since $\Gamma$ acts properly on $\tilde \Sigma'_+$ and 
$\tilde \Sigma'_-$   and 
$ \bigcup_{i\in \mathcal{J}}  \clo_\cD({\mathcal{A}}_i)$ respectively, 
only finitely many elements $g$ of $\Gamma$ satisfy $g(K_i) \cap K_i \ne \emp$ for each $i$, $i=-1,0,1$.
We showed that $\Gamma$ acts properly and freely on $\mathcal D$. 

Since $\tSigma'_+/\Gamma$ and $\tSigma'_-/\Gamma$ are compact surfaces, 
and $ \bigcup_{i\in \mathcal{J}} \clo_\cD({\mathcal{A}}_i)/\Gamma$ is a union of 
finitely many compact annuli, ${\mathcal{D}}/\Gamma$ is a closed surface. 

For the second item, ${\mathcal{D}}/\Gamma$ decomposes along the images of 
the union of ${\mathbf{l}}_i, {\mathbf{l}}_{i-}, i\in {\mathcal{J}}$ 
to $\Sigma''_+$ and $\Sigma''_-$ and annuli with boundary components that are images of 
the union of ${\mathbf{l}}_i$ and ${\mathbf{l}}_{i-}$, $i \in {\mathcal{J}}$.
Proposition \ref{prop:Ri} implies that these annuli are obtained from two $\pi$-annuli.

Since 
$\A(\Gamma| \SI^{2}_{\infty}) \A^{-1} = \Gamma|\SI^{2}_{\infty}$ and 
${\A}({\mathcal{D}}) = {\mathcal{D}}_-$ hold, 
$\Gamma$ acts properly and freely on ${\mathcal{D}}_-$. 
Let $\kappa$ denote the induced diffeomorphism $\Sigma \ra {\mathcal{D}}_-/\Gamma$
where $\kappa$ sends $\Sigma''_-$ to $\Sigma''_+$ and vice versa.
$\kappa$ sends annuli in $\Sigma$ to ones in ${\mathcal{D}}_-/\Gamma$ but they do not share 
subdomains in the universal cover. (Actually ${\mathcal{R}}_i$ goes to its antipodal image 
which is distinct from ${\mathcal{R}}_i$ for each $i \in {\mathcal{J}}$.)

The rest follows from this observation. 
\end{proof}



\begin{proof}[The proof of Theorem \ref{thm:A}] 
If $\L(\Gamma) \subset \SO(2, 1)^o$, then Theorem \ref{thm:domainq} proves Theorem \ref{thm:A}. 
Now, we consider the case when $\L(\Gamma)$ is a subgroup of $\SOto$ in general. 
Define $\Gamma'$ to be the subgroup of $\Gamma$ of index $ 2$ so that 
${\L}(\Gamma') \subset \SO(2, 1)^o$ using above notations.

%
%

Let $\psi$ be the element of $\Gamma - \Gamma'$.
Then $\psi$ is orientation-preserving but $\psi(\Ss_+) = \Ss_-$. 
Let $\mathcal{D}'$ be defined by \eqref{eqn:tsigma1} for $\Gamma'$. 
Since $\psi$ preserves orientation,
$\psi$ acts on ${\mathcal D}'$.
Thus, $\psi$ acts on ${\mathcal D}'$ and hence $\Gamma$ acts on ${\mathcal D}'$. 

Recall from the introduction that ${\L'}(\Gamma)$ is a free group of rank $\geq 2$ acting 
freely on $\Ss_+$ as a subgroup of $\PGL(3, \R)$ and 
$\Ss_+/{\L'}(\Gamma)$ is a nonorientable complete hyperbolic surface. 
Since ${\L'}(\Gamma)$ acts freely on $\Ss_+$, 
$\psi$ goes to an element of $\PGL(3, \R)$ that acts freely on $\Ss_+$
and on $\bigcup_{i\in \mathcal{J}} \partial_i\Ss_+$. 
Hence, $\psi$ as an element of $\Aut(\SI^3)$ 
acts without fixed point on 
\[\bigcup_{i\in \mathcal{J}} \bigcup_{x \in \partial_i\Ss_+} \varepsilon(x) = 
\coprod_{i\in \mathcal{J}} {\mathcal{A}_i}\]
since $\psi$ does not act on any fiber of the fibration. 
Hence $\psi$ has no fixed point on 
\[\tSigma'_+ \cup \coprod_{i\in \mathcal{J}} {\mathcal{A}}_i \cup \tSigma'_- .\]
${\mathcal D}'/\Gamma'$ double-covers a closed surface ${\mathcal D}/\Gamma$.

Finally, it is straightforward to verify the same statements for ${\A}({\mathcal D})$. 
Replacing with this set is equivalent to replacing $\clo(\varepsilon(x))$ with $\clo(\varepsilon(x)_-)$ for every appropriate point $x$
in \eqref{eqn:tsigma}.


We prove the uniqueness under the assumption that $\Sigma$ has a regular covering domain $\mathcal{D}' \subset \SI^2_{\infty}$. 
Theorem R of \cite{Gf} implies that for a given Fuchsian holonomy group $\Gamma$, a closed $\rptwo$-surface $\Sigma$
is obtained as follows: Let $\Sigma'$ denote a closed properly convex $\rptwo$-surface covered by $\Ss_+$. 
We cut  along the union of mutually disjoint simple closed geodesics in $\Sigma'$, 
and insert $\rptwo$-annuli with principal geodesic boundary with conjugate holonomy to the geodesics
by projectively pasting with the boundary components of the split surface. 
(See \cite{SulTh} and \cite{cdcr2} for details.) 
This construction is called {\em grafting}. 
Such an annulus is a union of elementary annuli meeting in geodesic boundary 
components. 

By Proposition 4.5 of \cite{cdcr2}, 
each boundary component of the inserted annulus is principal as we used simple closed geodesics in a properly convex closed $\rptwo$-surface $\Sigma'$.
Each inserted annulus has a domain $K$ in $\SI^2_{\infty}$ covering it since $\mathcal{D}'$ is a domain in $\SI^2_{\infty}$. 
The removed simple closed geodesics correspond to $\mathbf{l}_i$ and $\mathbf{l}_{i-}$ for $i \in {\mathcal{J}}$. 
$K$ must have boundary arcs ${\mathbf{l}}_i$ and ${\mathbf{l}}_{i -}$ for some $i \in {\mathcal{J}}$.
Since $K - \Ss_{+} - \Ss_{-}$ is a domain in $\SI^2_{\infty}$ and covers the compact annulus with boundary, 
$K - \Ss_{+} - \Ss_{-}$ is a union of components of 
\[\SI^{2}_{\infty} - (\Ss_{+} \cup \Ss_{-})  - (l_{1} \cup l_{2} \cup l_{3}) \] 
and components of \[(l_{1} \cup l_{2} \cup l_{3})  - \{a, a_{-}, r, r_{-}, s, s_{-}\}\]
using the notation of $\vth'$ of Section \ref{subsec:rpt} adopted to $\bg_{i}$. 
By Goldman \cite{Gthesis}, 
we can show that $K=\mathcal{R}_i$ or $K = \mathcal{R}_{i-}$ for some $i \in \mathcal{J}$.
(See also \cite{SulTh}.)
In order for these domains to be disjoint, they are all of from $\mathcal{R}_i$ or 
all of form $\mathcal{R}_{i-}$ for $i \in \mathcal{J}$. 
Now, it follows that $\mathcal{D}$ or $\mathcal{D}_{-}$ are only possible domains 
covering closed $\rptwo$-surfaces with the holonomy groups equal to $\L(\Gamma)$. 
(See also \cite{chgo}.)
\end{proof}


\section{$\Gamma$ acts properly on $\Lspace \cup \tSigma$.} \label{sec:propdisc}

First we briefly summarize the relevant results of Goldman, Labourie, and Margulis in \cite{GLM} and \cite{GL}. 
We will need some parts of this section for our main proof. 
\subsection{Diffused Margulis invariants}\label{sub:Marg}

We assume that 
$\Gamma$ acts freely and properly on $\Lspace$ and hence on $\Ss_+$ (or equivalent on $\Ss_-$).
$\Gamma$ acts on $\Ss_+$ as a geometrically finite Fuchsian group.


%
If $\Gamma$ acts freely and properly on $\Lspace$, then
Margulis invariants of nonidentity elements are all positive or all negative by  the opposite sign lemma. (See \cite{GLM}.)
By choosing the opposite orientation of $\Lspace$, 
we can change the all signs of the invariants.
We henceforth assume that the Margulis invariants of nonidentity elements are all positive.

The Margulis invariant $\mu(c)$ of a current $c$ supported on a 
simple closed geodesic $\hat c$ on $\Ss_{+}/\Gamma$ is given by $\mu(g)/l_{\Ss_{+}}(\hat c)$
for the corresponding element $g \in \Gamma$
and the hyperbolic length $l_{\Ss_{+}}(\hat c)$. 
In the second main theorem of \cite{GLM}, they 
define the diffused Margulis invariants  on $\mathcal{C}(\Ss_{+}/\Gamma)$ 
by continuously extending the Margulis invariants. 

These invariants of elements of ${\mathcal C}(\Ss_+/\Gamma)$ are positive by the second main 
theorem in \cite{GLM}.  The subspace of currents supported on
closed geodesics with total measure $1$ 
is precompact in ${\mathcal C}(\Ss_+/\Gamma)$.
Thus, there exists $C>0$ such that
\begin{equation} \label{eqn:lb} 
1/C \leq \frac{\mu(g)}{l_{\Ss_+}(g)} \leq C \hbox{ for } g \in \Gamma - \{\Idd \}
\end{equation}
for all $g\in\Gamma$, 
where $l_{\Ss_+}(g)$ denotes the minimum of the hyperbolic distances between $x$ and $g(x)$ for 
$x \in \Ss_+$.

%

\subsection{Neutralized sections}\label{sub:netralized}

We construct a flat affine bundle 
$\mathbf E$ over the unit tangent bundle $\Uu\Sigma_+$ of $\Sigma_+$ by
taking the quotient of $\Lspace \times \Uu\Ss_+$  by the diagonal action  
\[\gamma(v, x) = (\gamma(v), D\gamma(x)), v \in \Lspace, x \in \Uu\Ss_+, \gamma \in \Gamma\]
where $D\gamma$ is the differential map.  
%
The cover of $\mathbf E$ is denoted by $\hat{\mathbf E}$ and is identical with 
$\Lspace \times \Uu\Ss_+$. We denote by 
\[\pi_{\Lspace}: \hat{\mathbf E} = \Lspace \times \Uu\Ss_+ \ra \Lspace\]
the projection.

We define $\mathbf V$ as the quotient space 
of $\V \times \Uu\Ss_+$  under the diagonal action 
\[\gamma(v, x) = (\L(\gamma)(v), D\gamma(x)), v \in \V, x \in \Uu\Ss_+, \gamma \in \Gamma.\]

Given $\vu \in \Uu \Ss_+$, let $l(\vu)$ denote the geodesic tangent to $\vu$ in 
the hyperbolic plane $\Ss_+$ and $\partial_+ l(\vu) \in \partial \Ss_+$ the starting point 
and $\partial_- l(\vu) \in \partial \Ss_+$  the endpoint of $l(\vu)$. 
The maps $\Uu \Ss_+ \ra \partial \Ss_+$ given by $\vu \ra \partial_\pm l(\vu)$ are smooth maps. 
We define
\[\tilde \bnu: \vu \in \Uu\Ss_+ \mapsto \left(\vu, \frac{\rho(\vu) \times \alpha(\vu)}{|||\rho(\vu) \times \alpha(\vu)|||}\right) \] 
for the null vectors $\rho(\vu)$ and $\alpha(\vu)$ 
where  $||| \cdot |||$ is the absolute value of the Lorentzian norm and 
\[\llrrparen{ \rho(\vu) }= \partial_+ l(\vu) \in \partial \Ss_+ \hbox{ and } 
\llrrparen{ \alpha(\vu) }= \partial_- l(\vu) \in \partial\Ss_+\]
hold. 
Since  \[{\L(\gamma)}(\tilde \bnu(x)) = \tilde \bnu(D\gamma(x)), x\in \Uu\Ss_+, \gamma \in \Gamma,\]
$\tilde \bnu$ induces a {\em neutral section} 
\[ \bnu: \Uu\Sigma_{+}\ra \mathbf V.\]
(See \S 4.2 of \cite{GLM} for details.)

%
%
%

Let $\nabla$ be the flat connection on ${\mathbf E}$
as a bundle over $\Uu \Ss_+$ with fiber isomorphic to $\Lspace$
 induced from the product structure of $\hat{\mathbf E} = \Lspace \times \Uu\Ss_+$. 
(See \S 3.2 and 3.3 of \cite{GLM}.)
Lemma 8.4 of \cite{GLM} finds 
a {\em neutralized section} ${\mathcal N}: \Uu_{\rm rec} \Sigma_+ \ra {\mathbf E}$ 
satisfying 
\begin{equation}\label{eqn:fnu} 
\nabla_X {\mathcal N} = f \bnu
\end{equation}
where $X$ is the vector field of geodesic flow on 
$\Uu_{\rm rec} \Sigma_+$ and $f$ is a positive valued function defined on $\Uu_{\rm rec} \Sigma_+$.
%
%
\subsection{Lifting the neutralized section to the coverings} \label{sub:liftcov}
Let $\Uu_{\rm rec} \Ss_+$ denote the inverse image of $\Uu_{\rm rec} \Sigma_+$ in 
$ \Uu\Ss_+$. 
Thus, we find the section $\tilde{\mathcal{N}}: \Uu_{\rm rec} \Ss_+ \ra \hat{\mathbf{E}}$
lifting $\mathcal{N}$ satisfying 
\begin{equation} \label{eqn:tildeN} 
\tilde{\mathcal{N}} \circ \gamma = \gamma \circ \tilde{\mathcal{N}}, \gamma \in \Gamma.
\end{equation}

%
Equation \eqref{eqn:fnu} also lifts to 
\begin{equation} \label{eqn:tfnu}
\nabla_X \tilde{\mathcal{N}} = \tilde f \tilde \bnu
\end{equation}
where $X$ is the unit-vector field of geodesic flow on $\Uu \Ss_+$,
$\nabla$ is the flat connection on $\Lspace \times \Uu \Ss_+$, 
and $\tilde f$ is a positive-valued function. 

\subsection{Mapping the convex core\/}
%


Let $\Uu\Lspace$ denote the space of unit spacelike vectors at each point of $\Lspace$
and $\Uu \Lspace/\Gamma$ the space of unit spacelike vectors at each point of $\Lspace/\Gamma$. 
Denote by $\Uu_{\rm rec} \Lspace$ in $\Uu \Lspace$ the 
inverse image of the subset of $\Uu_{\rm rec} \Lspace/\Gamma$ composed of 
unit tangent spacelike vectors tangent to nonwandering  spacelike geodesics.


We give the topology of the space $\mathcal{G}_{\rm rec}\Ss_+$ of oriented geodesics in $\Ss_+$ 
mapping to nonwandering  oriented geodesics in $\Sigma_+$ 
by the quotient topology from $\Uu_{\rm rec} \Ss_+$. 
Similarly, we give the topology of the space $\mathcal{G}_{\rm rec}\Lspace$ 
of spacelike oriented geodesics in $\Lspace$ mapping to nonwandering  geodesics in $\Lspace/\Gamma$ 
by the quotient topology from $\Uu_{\rm rec} \Lspace$. 

A {\em bounded subset} of $\mathcal{G}_{\rm rec} \Ss_+$ is 
a set of geodesics passing a bounded set in $\Uu \Ss_+$, 
and a {\em bounded subset} of $\mathcal{G}_{\rm rec} \Lspace$ is 
a set of spacelike geodesics passing a bounded set in $\Uu \Lspace$
under the Euclidean metric $d_E$.

%
%
\begin{prop}[Theorem 1 of \cite{GL}] \label{prop:corr} 
The map $\tilde {\mathcal N}$ induces 
a  continuous function 
\[\mathcal{G}_{\rm rec}\Ss_+ \xrightarrow{\mathscr{N}}
\mathcal{G}_{\rm rec}\Lspace\]
where
a bounded set of elements of $\mathcal{G}_{rec}\Ss_+$ maps 
to a bounded set in $\mathcal{G}_{rec}\Lspace$. 
\end{prop}
\begin{proof} 
$\pi_{\Lspace} \circ \tilde {\mathcal N}$ maps 
a flow segment in $\Uu_{\rm rec} \Ss_+$ to a spacelike line segment in 
$\Lspace$. 
For a flow segment in $\Uu_{\rm rec} \Ss_+$, 
$\pi_{\Lspace} \circ \tilde {\mathcal N}$ sends it to a spacelike line segment in $\Lspace$. 
Since the geodesic flow transfers to the geodesic flow on $\Lspace$, 
this induces $\mathscr{N}$. 

The next statement follows by the fact that $\mathscr{N}$ is induced by the continuous map
$\pi_{\Lspace} \circ \tilde {\mathcal N}$ which sends a compact set to a compact set. 



\end{proof}

%
%


\subsection{The proof of proper discontinuity}
Let $\Gamma$ be as above. 
We assume initially that 
$\L(\Gamma) \subset \SOto^o$ and $\Ss_+/\Gamma$ is 
a complete genus $\tilde{\bg}$ hyperbolic surface with $\mathsf b$ ideal boundary components.
As above, we assume that the Margulis invariants of nonidentity elements are all positive.

By Theorem~\ref{thm:domainq},
\begin{equation} \label{eqn:tsigma2}
{\tSigma}\ :=\ \Ss_+ \cup\;  
 \coprod_{i\in {\mathcal{J}}} {\mathcal{R}}_i\;  \cup  \Ss_- \; \subset\; \SI^2_{\infty} 
\end{equation}
is a $\Gamma$-invariant open domain. 
The quotient map $\tSigma\longrightarrow\Sigma$ is a covering space
onto a closed $\rptwo$-surface $\Sigma$. 
This surface decomposes as 
a union of $\Sigma'_+$ and $\Sigma'_-$ and annuli 
$A_i$ for $i=1, \ldots, {\bbb}$. 






\begin{prop}\label{prop:propdisc} 
Let $\Gamma$ be 
as above.
Assume ${\L}(\Gamma) \subset \SO(2, 1)^o$. 
Then $\Gamma$ acts freely and properly on 
$\Lspace \cup \tSigma$. 
\end{prop} 
\begin{proof} 

Since $\Gamma$ acts freely 
on both $\R^3 = \Lspace$ and on $\tSigma$, it 
acts freely on the union. 
Since $\Gamma$ is discrete, it suffices to show that the action of 
$\Gamma$ on $\Lspace\cup \tSigma$ is proper.



Suppose that $\Gamma$ contains a sequence 
$g_n$ 
and that 
$K \subset \Lspace \cup \tSigma$ is compact
so that 
\begin{equation} \label{eqn:univ}
g_n(K) \cap K \ne \emp
\end{equation}
for all $n$.
We  show that the sequence $g_n$ 
is finite. 

We prove by contradiction: 
Suppose that $\{g_n\}$ is an infinite sequence of mutually distinct isometries. 


Recall that $\Gamma$ acts on 
$\partial\Ss_+$ 
as a {\em convergence group.} 
Let $a_n$ and $r_n$ denote the attracting fixed point and the repelling fixed point of $g_n$ respectively. 
We can choose a subsequence $g_{n_k}$ satisfying the convergence group property (see \S \ref{subsec:dyn}). 
For the attracting fixed point $a_{n_k}$ and the repelling fixed point $r_{n_k}$ in $\partial \Ss_+$ of $g_{n_k}$,
Lemma \ref{lem:fix} 
implies:
\begin{align*}
\{a_{n_k}\} & \ra a \\ 
\{r_{n_k}\}  & \ra b 
\end{align*}
for $a, b \in \partial \Ss_+$ provided $a \ne b$. 


{\em First, we will consider the case when $a \ne b$}: 

\subsection{Convergences of the axes} 
Now letting $\{g_n \}$ denote the subsequence, we note that each
$g_n$ has an attracting fixed point $a_n$ and 
a repelling fixed point $r_n$ in $\partial\Ss_+$.
By our conditions,  $\{a_n\} \ra a$ and $\{r_n\} \ra b$ hold by Lemma \ref{lem:fix}. 

We define 
\[ \nu := \frac{\beta \times \alpha}{|||\beta \times \alpha |||}\]
of nonzero vectors $\alpha$ and $\beta$ corresponding to $a$ and $b$ respectively. 
A Lorentzian isometry element $g_n$ acts as a translation on a unique spacelike line $\Axis({g_n})$ in the direction $\nu_n$ of
eigenvalue $1$. 
Let $\alpha_n$ and $\rho_n$ denote the null vectors in the directions of $a_n$ and $r_n$ respectively
so that $\alpha_n \ra \alpha$ and $\rho_n \ra \beta$ hold.
(We can assume without loss of generality that 
\[|||\rho_n \times \alpha_n ||| = 1, |||\beta \times \alpha ||| = 1.)\]
We let $\nu_n $ 
be the cross-product of $\rho_n$ and $\alpha_n$; that is, $\nu_n = \vv_0(g_n).$
Thus  $\llrrparen{ \nu_n} \ra \llrrparen{ \nu} \in \SI^2_{\infty}$.
Since $\{a_n\} \ra a$, 
the sequence
\[ \ovl{a_n \llrrparen{ \nu_n }  a_{n, -}}= \clo(\varepsilon(a_n)) \]
converges to a segment 
\[ \ovl{a \llrrparen{ \nu } a_-}= \clo(\varepsilon(a)). \]

Since the geodesics with endpoints  
$a_n, r_n$ pass the bounded part $K''$ of the unit tangent bundle of $\Ss_+$, 
\[\clo(\Axis({g_n})) = \clo({\mathscr{N}}(\overrightarrow{r_na_n}^o)) 
\; \longrightarrow \;  %
\clo({\mathscr{N}}(\overrightarrow{ba}^o))\] 
by the continuity of $\mathscr{N}$ of Proposition \ref{prop:corr}. 
Hence, each $\Axis({g_n})$ passes a point $p_n$, and $\{p_n\}$ forms a convergent sequence in $\Lspace$.
By choosing a subsequence, we assume without loss of generality that $p_n \ra p_\infty$
for $p_\infty \in \Lspace$.

Recall that the span of $\Axis(g_n)$ and $r_n$ is  
\[S_n :=  \W^{ws}(g_n). \] 
{\em To conclude}\/ (I)(i), {\em we obtained a sequence $\{g_n\}$ satisfying the properties}\/:  
\begin{align} \label{eqn:giprop}
& \alpha_n \ra \alpha,  \rho_n \ra \beta, \nu_n 
\ra \nu, \nonumber \\ 
& a_n \ra a, r_n \ra b, \, \llrrparen{ \nu_n  }\ra  \llrrparen{ \nu },  p_n \ra p_\infty, \nonumber \\ 
& \clo(\Axis(g_n)) \ra \, \ovl{\llrrparen{ \nu } p_\infty \, \llrrparen{ -\nu } },  \nonumber \\
& S_n 
\ra S_\infty.
\end{align} 

%

The last property follows easily.

%



\subsection{Conjugating to a standard boost form}\label{sec:ToStandardBoost}

Next we find a uniformly bounded sequence $h_n$ of coordinate changes
so that $h_ng_nh_n^{-1}$ is in standard form \eqref{eq:StandardProjectiveBoostForm}.
(See \S\ref{sec:StandardBoostForm}.) 
The uniform bounds on $h_n$ follow from dynamical properties established 
in \cite{GL} as stated by equation \eqref{eqn:giprop}. 


We now introduce $h_n \in \Aut(\SI^3)$ used for coordinatizing $\SI^3$ for each $n$.  
We choose $h_n$ so that 
\begin{equation} \label{eqn:map}
h_n(a_n) =\be_{1},  h_n(\llrrparen{ \nu_n})  = \be_{2},
h_n(b_n)   = \be_{3},  
h_n(p_n) = \be_{4},
\end{equation}
and $\Axis({g_n})$ is sent to 
\[
\overline{~\be_{2}\be_{4}\be_{{2-}}}^o. 
\]
By \eqref{eqn:giprop} and Lemma \ref{lem:matrixcor}, $\{h_n\}$ can be chosen so that $\{h_n\}$ converges to
a bi-Lipschitz map $h \in \Aut(\SI^3)$, uniformly in the $C^s$-sense for any integer $s \geq 0$. 

We can consider $\vv_0(g_n) = \nu_n$ as a tangent vector at the origin $O$ in $\Lspace$. 
Since $h_n$ acts on $\Lspace$, $h_n$ is an affine transformation and hence 
the linearization $\L(h_n)$ equals the differential $D h_{n, x}$ at any $x \in \Lspace$. 
By \eqref{eqn:map}, $\L(h_n)$ sends the unit Lorentzian norm vector $\vv_0(g_n)$ to a vector parallel to $(0, 1, 0) \in \V$.
By post-composing with a projective map $\Phi_n$ fixing $\be_n$, $i=1, \dots, 4$, 
we further modify $h_n$ so that  
\begin{equation} \label{eqn:Lhi} 
D h_{n, p_n}(\nu_n) = (0, 1, 0)_{\be_4} \hbox{ and hence } 
\L(h_n)(\vv_0(g_n)) = (0, 1, 0). 
\end{equation} 


Since $\{\vv_0(g_n)\}$ is a convergent sequence of vectors, we proved  
\begin{lem} \label{lem:hn} 
We can choose the sequences $h_n$ satisfying equation \eqref{eqn:map}
 convergent uniformly in $C^{s}$ to $h \in \Aut(\SI^3)$ for any integer $s \geq 0$
 so that 
\begin{alignat}{2} \label{equ:hiC}
&C^{-1} \bdd(h_n(x), h_n(y))   & \;\leq\;  \bdd(x, y) & \;\leq\;  C \bdd(h_n(x), h_n(y)) \nonumber \\ 
&C^{-1} \bdd(h_n^{-1}(x), h_n^{-1}(y))\  & \;\leq\;  \bdd(x, y) & \;\leq\;  C \bdd(h_n^{-1}(x), h_n^{-1}(y)) 
\end{alignat} 
for all $x, y \in \SI^3, n = 1, 2, \dots$ and a fixed positive constant $C$.
Furthermore, the sequence $h_n$ converges to $h \in \Aut(\SI^3)$ where 
\begin{align*}
h(a) & = \be_{1},  \\
 h(b)  &= \be_{3}, \\
 h(\llrrparen{ \nu }) & = \be_{2}, \\ 
 h(p) &= \be_{4}, \\
 \L(h)(\nu) &= (0, 1, 0).
 \end{align*} 
\end{lem} 

\subsection{Normalization}

We can conjugate by a sequence of $h_n$ of orientation-preserving bi-Lipschitz maps
considered as an element of $\Aut(\SI^3)$ so that 
$h_n g_n h_n^{-1}$ is of form 
\begin{equation}\label{eqn:giconj}
\hat g_n :=  
\bmatrix 
 \lambda(g_n) & 0 & 0  & 0\\ 
0                  & 1 & 0     &\mu(g_n) \\ 
0                  & 0 & \lambda(g_n)^{-1} & 0 \\     
0                  & 0 & 0    & 1 
\endbmatrix
\end{equation}
where $\lambda(g_n) > 1$ and the orientation preserving
$h_n$ is given by sending $\be_{1}$ to $a_n$,
$\be_{2}$ to $\llrrparen{ \nu_n } $, $\be_{3}$ to $r_n$, and $\be_{4}$ to $p_n$. 
Also, 
\begin{equation} \label{eqn:hatgi}
g_n = h_n^{-1} \circ \hat g_n \circ h_n;
 \end{equation} 
hence, $g_n$ acts as the standard form matrix in \eqref{eqn:giconj} does
up to coordinatization by $h_n$.

The closed geodesic $c_n \subset \Sigma_+$ corresponding to $g_n$
has length $l_{\Ss_{+}}(g_{n}) = 2\log(\lambda(g_{n}))$ for the largest eigenvalue $\lambda(g_{n})$ of $g_{n}$.
Recall that $\{g_n\}$ has a property \eqref{eqn:giprop}:  
\begin{lem} \label{lem:kiinf} 
%
$\lambda(g_n) \longrightarrow +\infty, \quad \mu(g_n) \longrightarrow +\infty$, 
and  $\frac{\mu(g_n)}{\lambda(g_n)} \longrightarrow 0$
as $i\longrightarrow \infty$.
\end{lem}
\begin{proof} 
Since $\{g_n\}$ restricts to a sequence of mutually distinct hyperbolic isometries in $\Ss_+$
with sequences of fixed points $\{a_n\} \ra a$ and $\{r_n\} \ra b$ and $a \ne b$, it follows  that $\{\lambda(g_n)\} \ra \infty$
by the discreteness of $\L(\Gamma)$.   
By \eqref{eqn:lb}, 
$\mu(g_n) \ra \infty$.  
By the main corollary of \cite{GLM}, $\mu(g_n) \leq C' l_{\Ss_{+}}(g_n)$
and hence $\mu(g_n) \leq 2C' |\log \lambda(g_n)|$ hold for $C' >0$; 
we obtain the final limit.  
\end{proof}

The set
\[ \hat \Lambda := \He - (\Lspace \cup \tSigma)\]
is compact. 
Moreover, 
\[ \hat \Lambda = \bigcup_{z \in \Lambda} \clo(\varepsilon(z)) \] 
for the limit set $\Lambda$ of $\Gamma$. 
 We recall from Lemma \ref{lem:Pi0}:
 \begin{itemize}
\item the stable sphere $\SI^2_0$ given by $x = 0$,
\item  the segment $\eta_+\  :=\  \ovl{~\be_{1}\be_{2}\be_{1-}~}$,  
\item the segment $\eta_-:= \ovl{~\be_{3} \be_{{2-}}\be_{3-}~}$.
\end{itemize}
Then:
\begin{itemize}
\item $h_n^{-1}(\eta_-) = \clo(\varepsilon(r_n))$ equals 
$\ovl{r_n \llrrparen{ -\nu_n}  r_{n-}}  \subset\  \hat \Lambda$ and
\item $h_n^{-1}(\eta_+) = \clo(\varepsilon(a_n))$ equals 
$\ovl{a_n \llrrparen{ \nu_n }  a_{n-}}  \subset  \hat \Lambda$.
\end{itemize}

\subsection{The convergence of convex balls}
Cover $K$ by compact convex sets 
and consider how their $g_n$-images transform.
By \S\ref{sec:ToStandardBoost}, 
we may assume by choosing a subsequence of $g_n$ satisfying 
\begin{itemize}
\item $S_n  \ra S_\infty$ a great $2$-dimensional sphere, 
\item $h_n^{-1}(\eta_-) \ra \eta^\infty_- = \clo(\varepsilon(b)) \subset \hat \Lambda$, and 
\item $h_n^{-1}(\eta_+)  \ra \eta^\infty_+ = \clo(\varepsilon(a)) \subset \hat \Lambda$. 
\end{itemize}
Here 
\begin{itemize}
\item $\eta^\infty_+$ is a segment $\ovl{a\llrrparen{ \nu} a_-} \subset \hat \Lambda$ and 
\item $\eta^\infty_-$ is a segment $\ovl{b \llrrparen{ -\nu} b_-} \subset \hat \Lambda$.
\end{itemize} 
They are all tangent to 
$\partial \Ss_+$ and
%
\[ K \; \subset\; \Lspace \cup \tSigma \;=\;  \He - \hat \Lambda
\quad\subset\quad \He - \bigcup_{n=1}^\infty 
h_n^{-1}(\eta_-). \]


\noindent
Let $\He_a$ and $\He_{a_-}$ denote the respective components of 
$\He - S_\infty$ containing $a$ or $a_-$.
We %
cover $K$ by balls $C_j$ ($j\in\mathfrak{J}$) bounded away from $S_\infty$ and  
by balls $B_l$ ($l\in\mathfrak{L}$) meeting $S_\infty$ precisely at the centers. 
\bigskip

\noindent
First, we  cover $K$ as follows:

\begin{itemize} 
\item 
$K_\infty := K \cap S_\infty$. 
is a compact subset of an open $3$-dimensional hemisphere in 
$S_\infty - \eta^\infty_-$. 
Cover $K_\infty$ by the interiors $\widetilde{B_l^o}$ of 
finitely many balls 
\[ \widetilde{B_l} \subset \Lspace \cup \tSigma\] 
with centers in $S_\infty$ for $l \in \mathfrak{L}$ of small $\bdd$-radius $\eps_0 > 0$ 
in $\He - \hat \Lambda$. 
(We assume that balls $\hat B_l$ of $\bdd$-radius $2\eps_0$ with the same centers remain in $\He - \hat \Lambda$.)
\item Choose a foliation in $\He$ by perpendicular segments to $S_\infty$. 
We divide the ball $\widetilde{B_{l}}$ into the components $B^+_l, B^-_l$ of $\widetilde{B_l} - S_\infty$ 
for each $l \in \mathfrak{L}$. 
Using isotopies $i_{+, l}$ and $i_{-, l}$ preserving the foliation so that 
\begin{itemize} 
\item the images    
\begin{alignat*}{3}
C^+_l && \subset \He_a && \hbox{ for } C^{+}_{l}:= i_{+, l}\left(\clo(B^{+}_{l})\right),  \\
 C^-_l && \subset \He_{a_-} && \hbox{ for } C^{-}_{l}:= i_{-, l}\left(\clo(B^{-}_{l})\right), 
\end{alignat*}
are compact and convex, and
\item 
the {\em dumbbell\/} $\Join(C^+_l, C^-_l)$ 
remains in $\hat B_l \subset \Lspace \cup \tSigma$
by displacing exclusively in  the $2\eps_0$-balls. 
\end{itemize} 
\item Furthermore
\begin{alignat}{3}
K_\infty  & \subset  \bigcup_{l \in \mathfrak{L}} \Join(C^+_l, C^-_l)^o \\  
& \subset \Lspace \cup \tSigma, 
\end{alignat}
\noindent
and for all $l \in \mathfrak{L}$,   
\begin{align}
\bdd\Big(C^+_l, S_\infty\Big) & > \delta_0, \notag\\
\bdd\Big(C^-_l, S_\infty\Big) & > \delta_0  
\label{eqn:Ci}  
\end{align}  
for some $\delta_0> 0$. 
\item We obtain a compact subset
\begin{align*}
 K' &:= K - \bigcup_{l \in\mathfrak{L}} \Join(C^+_l, C^-_l)^o  \\ 
 &\quad \subset \Big(\Lspace \cup \tSigma\Big) - S_\infty \\ 
 & \qquad = \He -  \Big( \hat \Lambda \cup S_\infty\Big).
 \end{align*}
Let $\eps_{K'} := \bdd(S_\infty, K')$. 
Find a finite set of $\bdd$-balls $B_j$, where $j \in \mathfrak{J}$, such that:
\begin{itemize}
\item $K' \subset \bigcup_{j \in \mathfrak{J}} B_j$;
\item $B_j$ has radius  $\frac{\eps_{K'}}{2}$; 
\item \begin{equation}\label{eqn:Bi} 
\bdd(B_j, S_\infty) \geq \frac{\eps_{K'}}{2}.
\end{equation}
\end{itemize}\end{itemize}
\end{proof}

\noindent Divide
$\mathfrak{J}$ into $\mathfrak{J}^+$ and $\mathfrak{J}^-$ so that 
$j \in \mathfrak{J}^+$ if and only if $B_j \subset \He_a$, and
$j \in \mathfrak{J}^-$ if and only if $B_j \subset \He_{a_-}$.
\subsection{The balls which are bounded away from $S_\infty$}



\begin{lem} \label{lem:BddAwayFromPlane}
Let $g_n$ be a sequence as above and let
\[ \mathscr{B} \ := 
\ \bigcup_{j\in \mathfrak{J}^+} B_j \;\cup\; \bigcup_{j \in \mathfrak{J}^-} B_j
\;\cup\; \bigcup_{l \in \mathfrak{L}} \big( C^+_l \cup C^-_l \big). \]
Then as $n\longrightarrow\infty$,  the sequence of sets
$\{g_n(\mathscr{B})\}$ converges to $\{a, -a\}$,
or to one of its subsets $\{a\}$ or $\{-a\}$. \end{lem}
\begin{proof}
For sufficiently small $\delta >0$, there exists $N_0$ such that for 
$n> N_0$
\[ \bdd\left(S_n, \mathscr{B} \right) > \delta \]
by \eqref{eqn:Ci} and \eqref{eqn:Bi} since $S_n \ra S_\infty$.

Since  $h_n(S_n) = \SI^2_0$,  
\eqref{equ:hiC} implies 
\[ 
\bdd\left(\SI^2_0, h_n(\mathscr{B})\right)  > C^{-1} \delta 
\]
 for $n > N_0$. 
 We decompose $\mathscr{B} = \mathscr{B}^{+}\cup \mathscr{B}^{-}$ 
 where 
 \[\mathscr{B}^{+} := \bigcup_{j\in \mathfrak{J}^+} B_j \cup \bigcup_{l \in \mathfrak{L}}  C^+_l
\hbox{ and } \mathscr{B}^{-} := \bigcup_{j\in \mathfrak{J}^-} B_j \cup \bigcup_{l \in \mathfrak{L}}  C^-_l.\] 
Now act by $\hat g_n$ and apply  
Lemma \ref{lem:Pi0} (a):
For any $\eps''> 0$, there exists $N_1$ so that for $n > N_1$, 
\[ \hat g_n  h_n(\mathscr{B}^{+}) \subset N_{\eps''}(\be_{1}), \,
\hat g_n  h_n(\mathscr{B}^{-}) \subset N_{\eps''}(\be_{1-})\] hold.  
\item Hence, for every $\eps'' > 0$, 
there exists $N_1 > N_0$ so that for $n > N_1$, 
the sequence of the images of these sets under $g_n$ go into the $\eps''$-$\bdd$-neighborhoods of $a_n$ or $a_{n -}$ by  \eqref{equ:hiC}. 
Since $a_n \ra a$ and $a_{n-} \ra a_-$, 
\begin{equation*}
\bdd^H\left( \{ a, a_-\},  g_n(\mathscr{B})
\right) \ra 0
\end{equation*} 
when $\mathscr{B}^{+}$ and $\mathscr{B}^{-}$ are both nonempty. 
The other cases are similar.   
\end{proof}

\noindent
If $K_\infty = \emp$, 
then the reader can proceed to \S \ref{sub:concl}. 
Thus assume $K_\infty \ne \emp$ from now on. 



\subsection{The balls meeting $S_\infty$.} 
We show that the dumbbells all converge to the semicircle outside of $\Lspace \cup \tilde \Sigma$.
\begin{lem}
For each $l\in\mathcal{L}$, 
the images $g_n\big(\Join(C^+_l, C^-_l)\big)$ converge to
the great semicircle $\eta^\infty_+ = \ovl{a \llrrparen{ \nu } a_-}.$
\end{lem}
\begin{proof}

Since $\bigcup_{l \in \mathcal{I}} \hat B_l$ and $\hat \Lambda$ 
are disjoint compact subsets of $\He$,
\[ 
\bdd\bigg( \bigcup_{l \in \mathcal{L}} \Join(C^+_l, C^-_l),\  ~\hat \Lambda\bigg) > \delta'
\]
for some $\delta'>0$.
We define \[C_{l, n} := S_n \cap \Join(C^+_l, C^-_l)\] for $l \in \mathcal{L}$. 
Since $\clo(\varepsilon(r_n))$ lies in the compact set $\hat\Lambda$, 
\[
\bdd\Big(C_{l, n} ,  \clo\big(\varepsilon(r_n)\big)\Big) > \delta' \]
for a fixed $\delta' > 0$. 
Thus by \eqref{equ:hiC}, 
\[ \bdd\big(h_n(C_{l, n}), \eta_{-}\big) > C^{-1}\delta' \]
for all $l \in\mathcal{L}$. 
By Lemma \ref{lem:Pi0},  
for every $\eps > 0$, there exists  $N_2$ (independent of $l$) 
such that 
\[ \hat g_n \circ h_n(C_{l, n}) \subset N_{\eps}(\be_{2})\]  
for  $n > N_2$. 
By \eqref{equ:hiC},  
\[ g_n (C_{l, n}) \subset N_{C\eps}(\llrrparen{ \nu_n}) \]
for $n > N_2$. 
Therefore \eqref{eqn:hatgi} implies 
\[
\lim_{n\ra\infty} g_n(C_{l, n}) = \llrrparen{\nu}. \] 

\noindent 
Lemmas~\ref{lem:geoconv}  and \ref{lem:BddAwayFromPlane} imply 
\[ \lim_{n\to\infty} g_n(\Join(C_{l}^{+}, C_{l, n}))
= \ovl{a\llrrparen{ \nu }} \]   
since
\[ g_n\big(\Join(C^+_l, C_{l, n})\big) = \Join\big(g_n(C^+_l), g_n(C_{l, n})\big). \] 
Similarly, 
\[ 
\lim_{n\to\infty} g_n\big(\Join(C^-_l, C_{l, n})\big)\ = \  \ovl{\llrrparen{ \nu} a_-}.\]
Since
\[
\Join(C^+_l, C_{l, n})  \cup \ \Join(C^-_l, C_{l, n})  = 
\Join(C^+_l, C^-_l), \]
and \[\ovl{a\,\llrrparen{ \nu }} \cup \, \ovl{\llrrparen{ \nu}  a_-}  =\eta^\infty_+,\]
facts in \S \ref{subsec:space} imply
\begin{equation}\label{eqn:JC}
\lim_{n\to\infty}
g_n\big(\Join(C^+_l, C^-_l)\big) =  \eta^\infty_+ 
\end{equation} 
for each $l \in\mathcal{L}$. 
\end{proof}


\subsection{Conclusion of the proof when attractors and repellers differ} \label{sub:concl} 
By our assumption of equation \eqref{eqn:univ}, $g_n(K) \cap K \ne \emp$ for all $n$.
There has to be some 
fixed pair of balls $B$ and $B'$ of one of the types:
\begin{itemize}
\item $B_j$ where $j \in\mathfrak{J}$;
\item $\Join(C^+_l, C^-_l)$ for $l \in\mathcal{L}$ 
\end{itemize}
so that $g_n(B) \cap B' \ne \emp$  for infinitely many $n$. 
However, Lemma~\ref{lem:BddAwayFromPlane} and \eqref{eqn:JC} imply
that $g_n(B)$ converges to one of:
$\eta^\infty_+ , \{a\},   \{a_-\} \subset \SI^2_{\infty}$.
Thus, for each $\eps >0$ and every ball $B$ in the collection indexed by 
$\mathfrak{L}\cup\mathfrak{J}$, for sufficiently large $n$,
\[N_\eps(\eta^\infty_+) \supset g_n(B)\]
by definition of the geometric convergence. 
However, since the compact sets $\eta^\infty$ and $B'$ are disjoint,
$N_\eps(\eta^\infty_+) \cap B' = \emp$ 
for any ball $B' $ indexed by $ \mathfrak{L} \cup \mathfrak{J}$ for sufficiently small $\eps > 0$, 
obtaining a contradiction.

\subsection{Conclusion of the proof when attractors and repellers coincide}
Now we consider the $a=r$ case. 
Since $\Gamma$ is a nonelementary Fuchsian group, 
choose an element $\gamma_0 \in \Gamma$ so that 
\[ \gamma_0 (a) \ne a. \] 


Consider the sequence $\{\gamma_0 g_i\} $. 
For each $\eps > 0$  and
a precompact neighborhood $U$ in $\partial\Ss_+ - \{a\},$ 
there exists an integer $I_0$ such that 
\[ \gamma_0 g_n(U) \subset N_\eps(\gamma_0(a)) \hbox{ for } n > I_0.\]
Now consider the sequence $g_n^{-1}\gamma_0^{-1}$.
Let $V$ be a precompact neighborhood in $\partial\Ss_+  - \{\gamma_0(a) \}$. 
Since $\gamma_0^{-1}(V)$ is 
a precompact neighborhood in $\partial\Ss_+ - \{a\}$ and for arbitrary $\eps > 0$, there exists $I_0$ such that 
\[ g_n^{-1}(\gamma_0^{-1}(V)) \subset N_\eps(a) \hbox{ for } n > I_0. \]
Thus, $\gamma_0(a)$ is the attractor point  
and $a$ is the repeller point of the sequence $\{\gamma_0 g_n\}$. (See \S \ref{subsec:dyn}.) 

Since 
\[ \gamma_0 g_n (K \cup \gamma_0(K)) \cap (K \cup \gamma_0(K)) \ne \emp
\]  
for infinitely many $n$, 
we are reduced to the case where $a \ne r$ 
by replacing $g_n$ with $\gamma_0 g_n$ 
and $K$ with another compact set 
\[ K \cup \gamma_0(K) \subset \Lspace \cup \tSigma. \]
 Again, we obtain contradiction.

\section{Tameness. }\label{sec:tameness}

\subsection{The compactification} 
Using the above notations, 
we note that $\tSigma/\Gamma$ is a closed surface of genus $\mathsf g$ and forms the boundary 
of the $3$-manifold $M := ( \Lspace \cup \tSigma)/\Gamma$ by Proposition \ref{prop:propdisc} 
provided ${\L}(\Gamma) \subset \SOto^o$. 
Therefore, $M$ is a $3$-manifold in general since $\Gamma$ has an index $\leq 2$
subgroup $\Gamma'$ with the property ${\L}(\Gamma') \subset \SOto^o$ and the fact that $\Gamma$ acts freely
on $ \Lspace \cup \tSigma$. 

We now show that $M$ is compact. The key idea is to isotopy 
some spheres into $\Lspace$ bounding 
compact $3$-balls. 
We first assume that $\L(\Gamma) \subset \SOto^o$
so that $\Gamma$ acts on $\Ss_+$ honestly. 

\begin{figure}[h]

\centerline{\includegraphics[height=6.5cm]{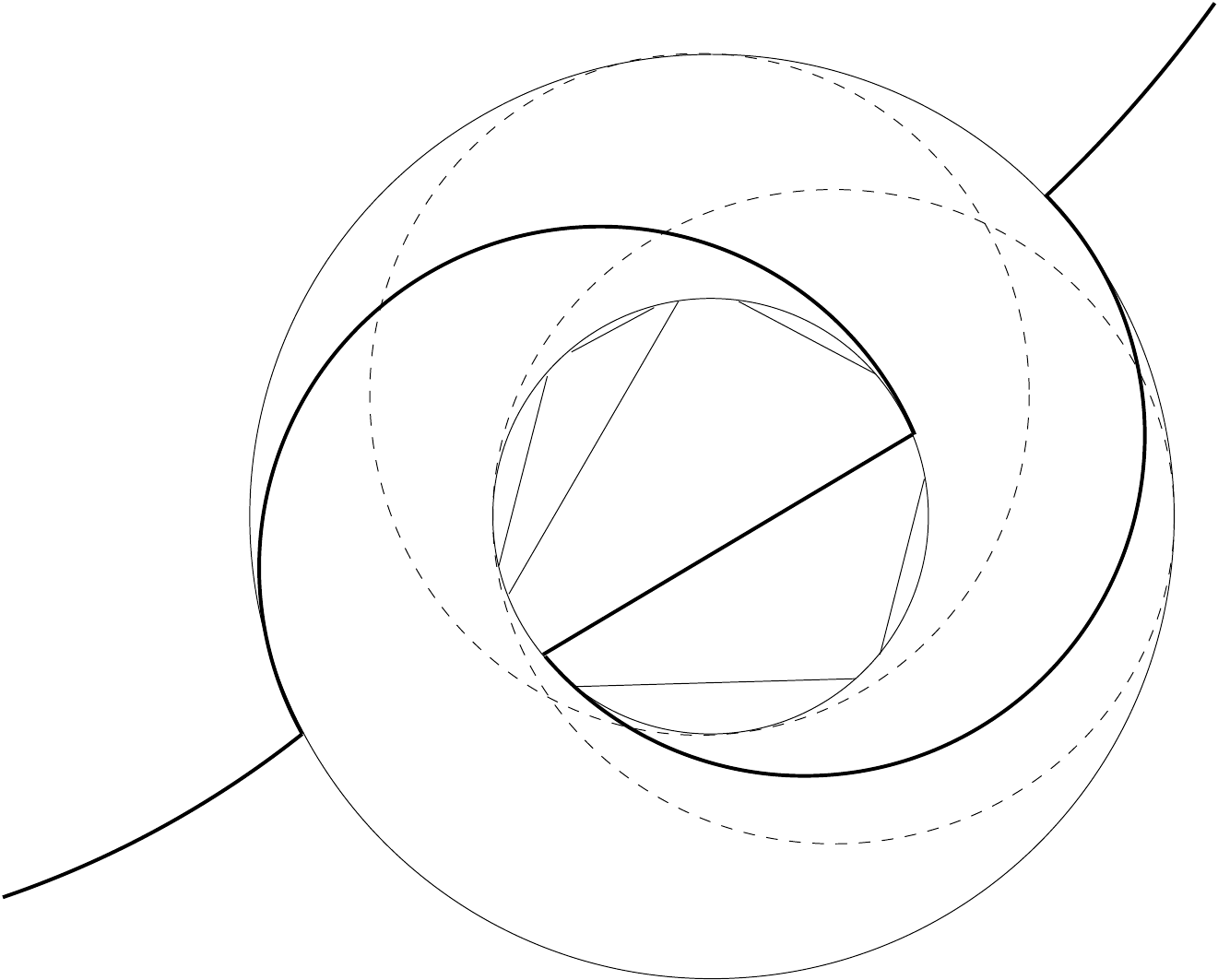}}
\caption{ The arcs in $\Ss_+$ and an example of $\hat \gamma_i$ in the bold arcs.\label{fig:decom}}

\end{figure}

\begin{prop} \label{prop:Dehn}
Each simple closed curve $\gamma$ in $\tSigma$ bounds a simple disk
in $\Lspace \cup \tSigma$. Let $c$ be a simple closed curve in $\Sigma$ 
that is homotopically trivial in $M$. Then $c$ bounds an embedded disk in $M= (\Lspace\cup \tSigma)/\Gamma$. 
\end{prop}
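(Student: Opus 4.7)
The plan splits naturally according to the two sentences of the statement, and each reduces to a well-known technique: Jordan--Sch\"onflies with a collar push for the first, and Dehn's lemma of Papakyriakopoulos for the second.

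For the first sentence, I would exploit that $\tSigma$ is an open subset of the $2$-sphere $\Ss = \partial\He$, so any simple closed curve $\gamma \subset \tSigma$ is also simple in $\Ss$. By Jordan--Sch\"onflies in $\Ss \cong S^2$, $\gamma$ bounds a topological closed disk $D \subset \Ss$; a priori $D$ may meet the stretched limit set $\hat\Lambda = \He - (\Lspace \cup \tSigma)$, but that will not matter. Fixing a collar neighborhood $\Ss \times [0,\epsilon) \hookrightarrow \He$ of $\partial\He$, I would construct an embedding $\phi \colon D^2 \to \He$ that parametrizes $\gamma$ on $\partial D^2$ and pushes the open interior of $D^2$ strictly into $\Lspace = \He^o$ along the collar direction. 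Since $\Lspace \cap \hat\Lambda = \emptyset$, the image $\phi(D^2)$ lies in $\Lspace \cup \gamma \subset \Lspace \cup \tSigma$, supplying the embedded disk bounded by $\gamma$.

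For the second sentence, Proposition~\ref{prop:propdisc} guarantees that $\Gamma$ acts freely and properly discontinuously on $\Lspace \cup \tSigma$, so the quotient $M = (\Lspace \cup \tSigma)/\Gamma$ is a genuine $3$-manifold-with-boundary whose boundary is the closed surface $\Sigma = \tSigma/\Gamma$. Given a simple closed curve $c \subset \Sigma$ null-homotopic in $M$, the null-homotopy produces a continuous map $f \colon D^2 \to M$ whose restriction $f|\partial D^2$ is a homeomorphism onto $c$. After a small perturbation supported in the interior of $D^2$, I may arrange that $f$ is piecewise linear with $f^{-1}(\partial M) = \partial D^2$. Dehn's lemma of Papakyriakopoulos then upgrades this singular disk with embedded boundary in $\partial M$ to an embedded disk in $M$ bounded by $c$.

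The main obstacle is essentially one of bookkeeping: confirming that we are genuinely in the setting where the classical Dehn's lemma applies, namely an honest $3$-manifold-with-boundary with the curve $c$ lying in the boundary. That confirmation is precisely the content of Proposition~\ref{prop:propdisc}, and beyond it both sentences are standard invocations of well-established topological theorems. No new geometric difficulty is expected here; the deeper work will come in the subsequent use of these meridian disks to piece together the handlebody structure.
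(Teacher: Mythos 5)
Your proposal is correct and takes essentially the same route as the paper; the paper's entire proof of this proposition is the single sentence ``This is just Dehn's lemma,'' so your elaboration (Jordan--Sch\"onflies plus a collar push for the first claim, Papakyriakopoulos for the second, with Proposition~\ref{prop:propdisc} supplying the manifold-with-boundary structure) is exactly the detail the authors leave to the reader.
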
 
\begin{proof} 
This is just Dehn's lemma. 
\end{proof}
We can find a collection of disjoint simple curves $\gamma_i$, $i \in {\mathcal{J}'}$, on $\tSigma$ 
for an index set ${\mathcal{J}'}$ so that the following hold:
\begin{itemize}
\item $\bigcup_{i\in {\mathcal{J}'}} \gamma_i$ is invariant under $\Gamma$.
\item $\bigcup_{i\in {\mathcal{J}'}} \gamma_i$ cuts $\tSigma$ into a union of open 
pair-of-pants $P_k$, $k \in K$, for an index set $K$.
The closure of each $P_k$ is a closed pair-of-pants. 
\item $\{ P_k \}_{k \in K}$ is a $\Gamma$-invariant set. 
\item Under the covering map $\pi: \tSigma \ra \tSigma/\Gamma$, 
each $\gamma_i$ for $i \in {\mathcal{J}'}$ maps to a simple closed curve in a one-to-one manner
and each $P_k$ for $k \in K$ embeds onto an open pair-of-pants. 
\end{itemize}

This is achieved by finding 
arcs in $\Ss_+/\Gamma$  cutting it into disks. 
Recall that $\Ss_+/\Gamma$ is an open surface compactified 
to the compact surface $\Sigma'_+$ with boundary.  We obtain 
a system of geodesic segments $\hat \alpha_i$, $i \in I$ for a finite set $I$, 
in $\Sigma'_+$ cutting $\Sigma'_+$ into a union of disks. Each disk is bounded by 
six arcs, alternating triple of which are arcs coming from $\partial \Sigma'_+$ 
and the other alternating triples are geodesic 
segments ending $\partial \Sigma'_+$.
We can assume that each $\hat \alpha_i$ 
is embedded in $\tSigma_+/\Gamma$ cutting it into finitely many hexagonal disks
so that each ideal boundary component of $\tSigma_+/\Gamma$ meets 
at least two arcs of form $\hat \alpha_i$. 

This gives a system of geodesic arcs $\alpha_j$, $j \in {\mathcal{J}'}$, 
in $\Ss_+$ for some infinite index set ${\mathcal{J}'}$ which decomposes $\tSigma_+$ into 
hexagonal disks and forms a $\Gamma$-invariant set. 
We define $\alpha_{j-}:= {\mathscr A}(\alpha_j)$ in $\clo(\Ss_-)$ for $j \in {\mathcal{J}'}$.  
We connect each endpoint $\partial_1(\alpha_j)$ and $\partial_2(\alpha_j)$ of $\alpha_j$ with 
its antipodal endpoint of $\alpha_{j-}$ by arcs of form $\varepsilon(x)$. 
\[ \gamma_j := \alpha_j \cup \alpha_{j-} \cup \varepsilon(\partial_1(\alpha_j)) \cup 
\varepsilon(\partial_2(\alpha_j)), j \in {\mathcal{J}'}.\]
(These form {\em crooked circles}.) 
We do this for each arc and obtain the above system of simple closed curves.
By construction, $\{\gamma_i\}_{i \in {\mathcal{J}'}}$ maps to a system of disjointly embedded curves 
$\hat \gamma_1, \ldots, \hat \gamma_{3{\bg}-3}$ in $\Sigma$, and each of them has a simple closed lift in $\tSigma$ and 
hence has a trivial holonomy. (See Figure \ref{fig:decom}.)
Also, the collection $\hat \gamma_1,\ldots,\hat \gamma_{3{\bg}-3}$ decompose the closed surface 
$\tSigma/\Gamma$ of genus $\bg$ for some ${\bg} \geq 2$ into $2{\bg}-2$ 
pairs of pants $P'_1, \ldots, P'_{2{\bg}-2}$.
(Here $\bg = 2\tilde \bg + {\mathsf b} -1$ for the genus $\tilde \bg$ of $\Ss_+/\Gamma$ and 
the number $\mathsf b$ of ideal boundary components.)

By trivial holonomy and  Dehn's lemma, each $\hat \gamma_i$ bounds a disk $D_i$ in $M$. By the $3$-manifold topology
of disk exchanges, we can choose $D_1, \ldots, D_{3\bg-3}$ to be mutually disjoint. (See \cite{Hempel}.)

Each pair of pants $P'_j$ and union with three adjacent ones in the collection $D_1, \dots ,  D_{3\bg-3}$ is homeomorphic 
to a $2$-sphere in $M$. 
We can lift the sphere into $\Lspace \cup \tSigma$ and being a sphere which can be pushed inside the cell $\Lspace$, 
it bounds a compact $3$-ball $B$ in $\Lspace \cup \tSigma$. 
The image of $B$ is a compact $3$-ball in $M$ as well since $\pi|\partial B$ is an embedding to a $2$-sphere
and the image of $B^o$ is disjoint from all other disks and $\pi$ is a covering map. 
Therefore, the Cauchy completion of each component of $M - D_1 - \cdots - D_{3\bg-3}$ 
is homeomorphic to a compact $3$-ball in $M$.  

Since $M$ is a union of the closure of the components that are $3$-balls identified with one another in disjoint disks, 
$M$ is a compact $3$-manifold. 
This implies also that $M$ is homeomorphic to a solid handlebody of genus $\mathsf g$; that is, 
our manifold $(\Lspace \cup \tSigma)/\Gamma$ is a compact $3$-manifold and 
so its interior $\Lspace /\Gamma$ is tame. This completes the proof of Theorem \ref{thm:B}
provided $\L(\Gamma) \subset \SOto^o$. 
(Also, it follows that the rank of $\Gamma$ equals $\mathsf g$.)

In the general case when ${\L}(\Gamma)$ is not a subgroup of $\SOto^o$, 
the above work applies to $(\Lspace \cup \tSigma)/\Gamma'$ 
double-covering $(\Lspace \cup \tSigma)/\Gamma$.
By Theorem 5.2 of Chapter 5 of \cite{Hempel},  $M$ is homeomorphic to a handlebody 
since $\Gamma$ is a free group and $(\Lspace \cup \tSigma)/\Gamma$ is
an aspherical compact $3$-manifold. 

We conclude: 

\begin{thm}[Compactification] \label{thm:C}
Let $\Lspace/\Gamma$ be a Margulis spacetime and $\Gamma$ is a Lorentzian isometry group without parabolics.  
Suppose that the Margulis invariants of elements of $\Gamma -\{\Idd\}$ are all positive. 
Choose $\tilde \Sigma$ by \eqref{eqn:tsigma1}. 
Then $(\Lspace \cup \tilde \Sigma)/\Gamma$ 
is homeomorphic to a compact handlebody. 
\end{thm} 
If we change the orientation or make the Margulis invariants to be all negative, then 
the conclusion holds for $(\Lspace \cup \mathcal{A}(\tilde \Sigma))/\Gamma$.
If we do both, then the conclusion holds for $(\Lspace \cup \tilde \Sigma)/\Gamma$.


\subsection{The almost-crooked-plane decomposition} \label{sub:acrooked}

A {\em crooked plane} is a closed disk embedded in $\Lspace$ constructed as follows: 
Take two null vectors $\vv_1$ and $\vv_2$ and respective parallel lines $L_1$ and $L_2$ meeting at a point $x$. 
Then they are on a timelike plane $P$. Take all timelike lines on $P$ from $x$ and take their union.
It is a union of two cones $C_1$ and $C_2$ with vertex at $x$. 
We obtain two null half-planes $\W(\vv_1)$ 
and $\W(\vv_2)$ containing $x$. 
A crooked plane is the union $C_1 \cup C_2 \cup \W(\vv_1) \cup \W(\vv_2)$.
(See \cite{Drumm_thesis} for details.)

A disk in $\Lspace$ is an {\em almost crooked plane} if it agrees with a crooked plane 
in the complement of its compact subset. Also, its immersed or embedded image is said to be an
{\em almost crooked plane} as well. 

\begin{prop} \label{prop:genschottky} 
Let $\tSigma$ and $\Gamma$ be as above.  
The Margulis spacetime $\Lspace/\Gamma$ without cusp has 
a system of disks with boundary in $(\Lspace \cup \tSigma)/\Gamma$ 
so that the closures of components of the complement are compact $3$-balls
and the disks are almost crooked planes. 
\end{prop}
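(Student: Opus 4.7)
The plan is to upgrade the embedded disks built in the proof of Theorem~\ref{thm:B} so that each one is an almost crooked plane.  The key observation is that the boundary curves $\hat\gamma_i\subset\tSigma/\Gamma$ constructed there are precisely the images of the crooked circles
\[
\gamma_j \;=\; \alpha_j\,\cup\,\alpha_{j-}\,\cup\,\varepsilon(\delta_1(\alpha_j))\,\cup\,\varepsilon(\delta_2(\alpha_j)),
\qquad j\in{\mathcal{J}'},
\]
which already have the shape of the ideal boundary of a Drumm crooked plane in $\Lspace$: the two timelike cones of a crooked plane close up in $\SI^3$ to the geodesic arcs $\alpha_j\subset\Ss_+$ and $\alpha_{j-}\subset\Ss_-$, and the two null half-plane wings close up to the $\varepsilon$-arcs.

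First, for each $j\in{\mathcal{J}'}$ let $\vv_1,\vv_2\in\mathcal N$ be null vectors representing $\delta_1(\alpha_j),\delta_2(\alpha_j)\in\BdS\Ss_+$; the 2-plane $\Pi_j\subset\V$ they span is Lorentzian.  Choose a vertex $x_j\in\Lspace$ equivariantly (make the map $j\mapsto x_j$ a $\Gamma$-equivariant selection on the set of orbits) and form the crooked plane $P_j := C_1(x_j)\cup C_2(x_j)\cup\W(\vv_1)\cup\W(\vv_2)$, whose cones lie in the affine plane $x_j+\Pi_j$.  By Drumm's construction $P_j$ is a properly embedded disk in $\Lspace$, and its closure in $\SI^3$ is a topological disk meeting $\Ss$ exactly in $\gamma_j$.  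In particular $P_j$ extends to a properly embedded disk in $\Lspace\cup\tSigma$ with boundary $\gamma_j$, and the collection $\{P_j\}$ is $\Gamma$-invariant.  It descends to a finite collection of properly immersed disks $\bar P_1,\ldots,\bar P_{3g-3}$ in $M$ with boundaries $\hat\gamma_1,\ldots,\hat\gamma_{3g-3}$.

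Next, apply the classical disk-exchange/Dehn's lemma machinery of 3-manifold topology (see Hempel~\cite{Hempel}) inside $M$ to replace $\{\bar P_i\}$ by a disjoint embedded family $\{D_i\}$ with the same boundaries.  Since the $\bar P_i$ are proper and $\tSigma/\Gamma$ is compact, their pairwise and self-intersection loci are contained in a compact subset $K\subset M$; hence every disk exchange can be performed inside $K$.  Outside $K$ each new disk $D_i$ therefore agrees with $\bar P_i$, which is an honest crooked plane, and so $D_i$ is an almost crooked plane.  The combinatorial argument from the proof of Theorem~\ref{thm:B} applies verbatim: each component of $M\setminus\bigcup_i D_i$ is bounded by a 2-sphere consisting of one pair-of-pants $P'_k\subset\tSigma/\Gamma$ together with three of the $D_i$, and since such a sphere can be pushed into $\Lspace/\Gamma$ and lifted to a sphere in $\Lspace$, it bounds a compact 3-ball.

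The main obstacle is compactness of the intersection locus after passing to the quotient: although each $P_j$ has infinite extent in $\Lspace$, one must verify that in $M$ the $\bar P_i$ interact with each other and with themselves only in a compact region, so that the disk exchanges can be confined there.  This follows from the proper discontinuity of $\Gamma$ on $\Lspace\cup\tSigma$ established in Proposition~\ref{prop:propdisc}, combined with the fact that near $\tSigma/\Gamma$ the disks accumulate only on the \emph{disjoint} crooked circles $\hat\gamma_i$ (their disjointness in $\tSigma$ is the content of Proposition~\ref{prop:disj} and the construction in Section~\ref{sec:tameness}).  Once this compactness is in hand, the almost-crooked property is preserved automatically by the exchanges, completing the proof.
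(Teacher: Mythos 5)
Your argument for the orientable case (when ${\mathcal L}(\Gamma)\subset\SOto^o$) is essentially the same as the paper's, and the details you fill in — identifying the $\gamma_j$ as the ideal boundaries of honest crooked planes with freely chosen vertices, and then replacing the immersed disks $\bar P_i$ by disjoint embedded ones that keep the same collar near $\tSigma/\Gamma$ — are correct. (One small wrinkle: since each $\bar P_i$ is a priori only immersed, it is the tower construction in Dehn's lemma, not simply a disk exchange between already-embedded disks, that does the work; but the standard statement of Dehn's lemma does preserve a collar of the boundary, and for a compact disk ``agreeing outside a compact subset'' and ``agreeing near the boundary'' are the same, so your conclusion that each $D_i$ is an almost crooked plane goes through. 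The paper's version of this step is briefer: it simply observes that in Dehn's lemma one may prescribe the tubular neighborhood of the boundary to be that of the crooked plane.)

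However, your proof omits the case ${\mathcal L}(\Gamma)\not\subset\SOto^o$, which is included in the statement (``Let $\tSigma$ and $\Gamma$ be as above'' allows $\bH^2/{\mathcal L}'(\Gamma)$ to be non-orientable). In that case $\Gamma$ does not act on $\Ss_+$, so the arcs $\alpha_j$ — and hence the crooked circles $\gamma_j$ — must be built $\Gamma$-equivariantly from a hexagon decomposition of the non-orientable surface $\Ss_+/{\mathcal L}'(\Gamma)$; one then works in the index-two orientable cover $(\Lspace\cup\tSigma)/\Gamma'$, and the almost crooked disks must be made disjoint \emph{equivariantly} with respect to the order-two deck transformation $\phi$. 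This requires an equivariant refinement of Dehn's lemma; the paper invokes the proof of Theorem~3 of Gordon--Litherland~\cite{GorL} to make each disk $D$ disjoint from $\phi(D)$ without altering a neighborhood of $\partial D$. Without some argument of this kind, the collection of disks you build in the double cover need not descend to $M$ in the general case, so this step needs to be added.
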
 
\begin{proof} 
First, assume ${\L}(\Gamma) \subset \SOto^o$. 
Each of our disks has the boundary of a crooked plane. 
In Dehn's lemma, one can arbitrarily assign the tubular neighborhood of the boundary of each disk 
as long as it is transversal to $\tSigma/\Gamma$. 
Thus, following the above discussions, we obtain the proof in this case. 

Suppose that ${\L}(\Gamma)$ is not in $\SOto^o$. 
Then we take a quotient $\Sigma' := \Ss_+/{\L}'(\Gamma)$ given by the projective action.
The nonorientable $\Sigma'$ admits a decomposition into hexagonal disks as above. 
We obtain the induced 
decomposition on $\Ss_+ \cup \Ss_-$ and we extend the above construction to this situation
and obtain the crooked circles and crooked planes. 

As above $(\Lspace \cup \tSigma)/\Gamma'$ double-covers 
$(\Lspace \cup \tSigma)/\Gamma$ with the deck transformation group generated 
by a projective automorphism $\phi$ of order two.
We choose a collection for the quotient $\Ss_+/{\L'}(\Gamma)$ as in the beginning of the proof 
of Proposition \ref{prop:Dehn}. Then $\phi$ acts on the system of circles by construction.  
We can modify each almost crooked plane $D$ not changing a neighborhood of $\partial D$ in $D$ 
so that $D \cap \phi(D) = \emp$ using the proof of Theorem 3 of \cite{GorL}. 
Hence, we obtain a collection of $\phi$-equivariant almost crooked planes that cuts $M$ into $3$-balls. 
\end{proof}

\begin{cor} 
A Margulis spacetime without cusp has a finite sided fundamental polyhedron in $\Lspace$. 
\end{cor}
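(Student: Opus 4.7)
The plan is to lift the finite system of almost crooked planes constructed in Proposition \ref{prop:genschottky} to the universal cover $\Lspace \cup \tSigma$ and assemble a fundamental domain from a well-chosen set of representatives of the resulting cell decomposition. Let $D_1, \ldots, D_N$ denote the almost crooked planes in $M = (\Lspace \cup \tSigma)/\Gamma$ furnished by Proposition \ref{prop:genschottky}, and let $B_1, \ldots, B_m$ be the closures of the components of $M \setminus \bigcup_j D_j$, each of which is a compact $3$-ball. Writing $\pi: \Lspace \cup \tSigma \ra M$ for the covering projection, the preimage $\pi^{-1}\!\left(\bigcup_j D_j\right)$ is a $\Gamma$-equivariant, locally finite collection of almost crooked planes in $\Lspace \cup \tSigma$ whose complementary components have compact $3$-ball closures, each mapping homeomorphically onto some $B_k$.

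Next, I would extract a \emph{connected} fundamental domain by a dual-graph argument. Form the dual graph of the cell decomposition of $\Lspace \cup \tSigma$ obtained from the lifted disks (vertices = complementary $3$-ball closures, edges = shared almost crooked planes); the corresponding quotient graph is finite, arising from the pattern of identifications among the $B_k$ along the $D_j$. Lift a spanning subtree of this quotient graph to a subtree in the dual graph, and let $F$ be the union of the closures of the lifted components indexed by its vertices. Then $F$ is a connected closed subset of $\Lspace \cup \tSigma$ whose interior meets each $\Gamma$-orbit exactly once and whose $\Gamma$-translates cover $\Lspace \cup \tSigma$; that is, $F$ is a fundamental domain. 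Its topological frontier in $\Lspace \cup \tSigma$ consists of finitely many pieces of almost crooked planes (arising from boundary edges of the spanning tree) together with finitely many subsurfaces of $\tSigma$.

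Finally, I would set $P := F \cap \Lspace$. Since $\tSigma \subset \Ss = \partial \He$ while $\Lspace = \He^o$, the $\tSigma$-portion of $\partial F$ disappears upon intersection and the frontier of $P$ in $\Lspace$ consists of precisely the finitely many pieces of almost crooked planes described above. Because $\Gamma \cdot F = \Lspace \cup \tSigma$ implies $\Gamma \cdot P = \Lspace$, and the $\Gamma$-translates of $P^o$ remain pairwise disjoint, $P$ is a fundamental polyhedron for the $\Gamma$-action on $\Lspace$ with finitely many sides.

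The main technical point — though a mild one — is the spanning tree bookkeeping in the second step, needed to guarantee connectedness of the fundamental domain rather than merely a disjoint collection of $3$-balls. All the substantial geometric and dynamical work has already been absorbed into Proposition \ref{prop:genschottky} and into the tameness argument of Section \ref{sec:tameness}; what remains is this standard combinatorial packaging.
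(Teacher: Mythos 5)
Your proposal is correct and follows essentially the same route as the paper: the paper's one-line proof simply takes a union of finitely many of the $3$-balls obtained by cutting along the (almost crooked) disks, and your spanning-tree argument is the standard elaboration of how to assemble such a union into a connected fundamental domain with finitely many faces. The extra bookkeeping you provide is exactly the content the paper elides.
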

\begin{proof} 
We take a union of finitely many of the $3$-balls obtained by cutting along the disks in $\Lspace$.
\end{proof}

\bibliographystyle{plain}


\end{document}